\documentclass[a4paper,10pt]{article}

\usepackage{a4wide}

\usepackage[latin1]{inputenc}
\usepackage{subfigure}

\usepackage{amsmath, amssymb, amsthm}
\usepackage{tikz}
\usepackage{enumitem}
\usepackage{pdflscape}
\usepackage{mathtools}
\usepackage{hyperref}

\newtheorem{theorem}{Theorem}
\newtheorem{definition}[theorem]{Definition}
\newtheorem{observation}[theorem]{Observation}
\newtheorem{remark}[theorem]{Remark}
\newtheorem{conjecture}[theorem]{Conjecture}

\newcommand{\commonpat}{2\underbracket[.5pt][1pt]{41}3}
\newcommand{\baxpat}{3\underbracket[.5pt][1pt]{14}2}
\newcommand{\TCat}{\mathcal{T}_{\textrm{Cat}}}
\newcommand{\TBax}{\mathcal{T}_{\textrm{Bax}}}
\newcommand{\TSch}{\mathcal{T}_{\textrm{Sch}}}

\DeclareMathOperator{\Bax}{Bax}

\newcommand{\myS}{\mathcal{S}}


\author{Nicholas R. Beaton, Mathilde Bouvel, Veronica Guerrini and Simone Rinaldi}

\title{Slicings of parallelogram polyominoes: \\ Catalan, Schr\"oder, Baxter, and other sequences}

\begin{document}

\maketitle

\begin{abstract} 
We provide a new succession rule (\emph{i.e.}~generating tree) associated with Schr\"oder numbers, 
that interpolates between the known succession rules for Catalan and Baxter numbers. 
We define Schr\"oder and Baxter generalizations of parallelogram polyominoes, called \emph{slicings}, 
which grow according to these succession rules. 
In passing, we also exhibit Schr\"oder subclasses of Baxter classes, namely 
a Schr\"oder subset of triples of non-intersecting lattice paths, 
a new Schr\"oder subset of Baxter permutations, 
and a new Schr\"oder subset of mosaic floorplans. 
Finally, we define two families of subclasses of Baxter slicings: the $m$-skinny slicings and the $m$-row-restricted slicings, for $m \in \mathbb{N}$. 
Using functional equations and the kernel method, their generating functions are computed in some special cases, 
and we conjecture that they are algebraic for any $m$.
\end{abstract}

\noindent \textbf{Keywords:} Parallelogram polyominoes, Generating trees, Baxter numbers, Schr\"oder numbers, Catalan numbers, Non-intersecting lattice paths, Kernel method.

\section{Introduction}
\label{sec:intro}

The sequence of Catalan numbers (\textsc{a000108} in~\cite{OEIS}) is arguably the most well-known combinatorial sequence. 
It is known to enumerate dozens of families of combinatorial objects, including
Dyck paths, 
parallelogram polyominoes, 
and $\tau$-avoiding permutations, for any permutation $\tau$ of size $3$. 
In this paper, we are interested in Catalan numbers as well as in two larger combinatorial sequences: the Schr\"oder and Baxter numbers. 

Baxter numbers (sequence \textsc{a001181}) were first introduced in~\cite{CGHK78}, 
where it is shown that they count Baxter permutations. 
They also enumerate numerous families of combinatorial objects, 
and their study has attracted significant attention, see for instance~\cite{mbm,ffno}. 
Many such Baxter families can be immediately seen to contain a Catalan subfamily. 
For instance, the set of triples of non-intersecting lattice paths (NILPs) contains all pairs of NILPs (which are in essence parallelogram polyominoes, see Figure~\ref{fig:PP}(a) and the blue and red paths of Figure~\ref{fig:PP}(c)); 
and Baxter permutations, defined by the avoidance of the vincular\footnote{Note that we do not represent vincular patterns with dashes, 
as it was done originally. We prefer the more modern and more coherent notation that indicates by a symbol $\underbracket[.5pt][1pt]{~\ ~}$ 
the elements of the pattern that are required to be adjacent in an occurrence.} 
patterns $2\underbracket[.5pt][1pt]{41}3$ and $3\underbracket[.5pt][1pt]{14}2$, 
include $\tau$-avoiding permutations, for any $\tau \in \{132,213,231,312\}$. 

On the other hand, the (large) Schr\"oder numbers (sequence \textsc{a006318}) seem to be a bit less popular. 
They also form a sequence point-wise larger than the Catalan sequence, 
and it is additionally point-wise smaller than the Baxter sequence. 
This is most easily seen by considering permutations, where the Schr\"oder numbers count the separable permutations~\cite{ShaSte,West}, 
defined by the avoidance of $2413$ and $3142$. 

The first purpose of this article is to explain and illustrate the inclusions ``\emph{Catalan in Schr\"oder in Baxter}''. 
Although these inclusions are obvious on pattern-avoiding permutations, they remain quite obscure on other objects. 
Indeed, looking at several combinatorial objects, it appears that 
the permutation example is a little miracle, and that the unclarity of these inclusions is rather the rule here. 
To give only a few examples, consider for instance lattice paths: 
the Dyck paths generalize to Schr\"oder paths (by allowing an additional flat step of length $2$), but have, to our knowledge, no natural Baxter analogue; 
on the contrary, pairs of NILPs are counted by Catalan, whereas triples of NILPs are counted by Baxter, leaving Schr\"oder aside. 
Or, consider another well-known Catalan family: that of binary trees. 
There are Schr\"oder and Baxter objects generalizing binary trees 
(Schr\"oder trees, with an additional sign on the root on one hand, 
or pairs of twin binary trees on the other), 
but they have apparently nothing in common. 

As these examples illustrate, the Baxter and Schr\"oder generalizations of Catalan objects are often independent and are not easily reconciled. 
This fact is also visible at a more abstract level, \emph{i.e.}~without referring to specific combinatorial families: 
by considering the generating trees (with their corresponding succession rules) associated with these sequences 
(we will review the basics of generating trees in Section~\ref{sec:Catalan}). 
As we demonstrate in this work, for the known generating trees associated with the Schr\"oder and Baxter numbers, 
when they can be seen as generalizations of the generating tree of Catalan numbers, 
then these two generalizations go in two opposite directions. 
Our first contribution is to provide a continuum from Catalan to Baxter via Schr\"oder 
that is visible at the abstract level of succession rules. 
In this paper, as well as in our recent works~\cite{BijectionNick,SemiAndStrong}, 
we consider several generating trees and their associated succession rules, 
and we focus on succession rules that \emph{generalize}, or conversely \emph{specialize}, well-known succession rules. 
Although this can be understood at a rather informal level (and this is actually how we originally worked), we propose a formalization of this idea of generalizing (resp. specializing) a succession rule in Section~\ref{sec:gen-spec}.

We will focus mostly on generalizations of parallelogram polyominoes, which we call \emph{slicings} of parallelogram polyominoes. 
In particular, Section~\ref{sec:Baxter} defines our Baxter slicings (also showing their tight connection with triples of NILPs). 
These new objects allow us to see that the usual Baxter succession rule does nothing but symmetrize the Catalan succession rule. 
Then in Section~\ref{sec:Schroeder}, we introduce a new succession rule associated with Schr\"oder numbers 
that interpolates between the Catalan and Baxter rules of Sections~\ref{sec:Catalan} and~\ref{sec:Baxter}. 
Letting our slicings grow with this rule allows us to define the family of Schr\"oder slicings. 
From there, the final sections go in different directions. 

Section~\ref{sec:other_Schroeder} presents other Schr\"oder subclasses of Baxter classes, 
obtained via our new Schr\"oder succession rule. This includes triples of NILPs, permutations and mosaic floorplans. 
Note that Schr\"oder subclasses of Baxter permutations and of mosaic floorplans already appear in the literature, 
like the separable permutations~\cite[for instance]{West} and the slicing floorplans~\cite{Yao}: 
our Schr\"oder subclasses are different from these. 
For triples of NILPs on the contrary, we are not aware of any known Schr\"oder subclass.

In Section~\ref{sec:extensions}, we introduce more intermediate classes between Catalan and Baxter, 
refining our new Schr\"oder succession rule with an integer parameter $m$ that may vary. 
This results in two families of subclasses of Baxter slicings: the $m$-skinny slicings and the $m$-row-restricted slicings. 
Section~\ref{sec:gen_funcs} is interested in the generating functions for these subclasses. 
First, the succession rules for $m$-skinny slicings and $m$-row-restricted slicings are translated into systems of equations for their generating functions. 
For the first values of $m$, these systems can be solved using the kernel method, showing an intriguing enumerative coincidence. 
Although we were not able to solve these systems for general $m$, we present a method to reach this goal, 
which fails only because we were not able to prove that the power series solutions of a certain equation are linearly independent. 
Note that this property is indeed verified for a few more values of $m$,  as Table~\ref{table:conj} (page~\pageref{table:conj}) sums up, solving a few more cases of the enumeration of $m$-skinny slicings and $m$-row-restricted slicings. 
In view of our method, we offer the conjecture that the generating functions for $m$-skinny slicings and $m$-row-restricted slicings are algebraic, for all $m$. 

\vspace{-0.1cm}

\section{Parallelogram polyominoes and the generating tree for Catalan numbers
\label{sec:Catalan}}

There are many ways of defining (or characterizing) parallelogram polyominoes in the literature, 
and we only give one that fits our needs. 

\begin{definition}
A \emph{parallelogram polyomino} $P$ (see an example in Figure~\ref{fig:PP}(a)) is an (edge-)connected set of unit cells in the Cartesian plane 
that is the interior of a contour defined by two paths which are composed of $(0,1)$ and $(1,0)$ steps 
and which never meet except at their beginning and end. 
Denoting $(k,\ell)$ the dimension of the minimal bounding rectangle of $P$ 
($k$ being its width and $\ell$ its height), 
the semi-perimeter of $P$ is $k+\ell$, and the size of $P$ is $k+\ell -1$. 
\end{definition}

\begin{figure}[ht]
\begin{center}
\includegraphics[scale=0.7]{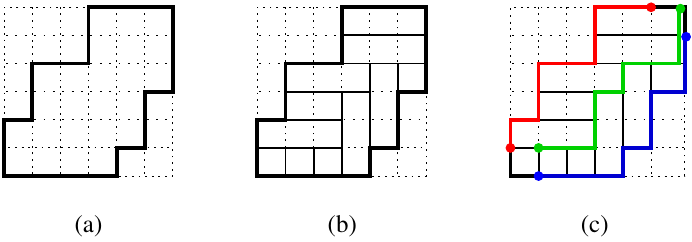}
\end{center}
\caption{(a) A parallelogram polyomino $P$ of size $11$, 
(b) a Baxter slicing of shape $P$, and (c) the triple of NILPs associated with it. 
\label{fig:PP}} 
\end{figure}

We start by reviewing generating trees~\cite{GFGT,Eco,West}, 
and in particular the generating tree for Catalan numbers associated with parallelogram polyominoes. 

A \emph{generating tree} for a combinatorial class $\mathcal{C}$ is an infinite rooted tree, whose vertices are the objects of $\mathcal{C}$, 
each appearing exactly once in the tree, 
and such that objects of size $n$ are at level $n$ in the tree
(with the convention that the root is at level $1$, and is labeled by the only object of size $1$ in $\mathcal{C}$). 
The children of some object $c \in \mathcal{C}$ are obtained by adding an \emph{atom} (\emph{i.e.}~a piece of object that makes its size increase by $1$) to $c$. 
Of course, since every object should appear only once in the tree, not all additions are possible. 
We should ensure the unique appearance property by considering only additions that follow some restricted rules. 
We will call the \emph{growth} of $\mathcal{C}$ the process of adding atoms following these prescribed rules. 

A generating tree of parallelogram polyominoes was described in~\cite{Eco}, by means of a so-called \emph{ECO operator}, 
and its first levels are illustrated in Figure~\ref{fig:growth_PP}. 
The atoms that may be inserted are rightmost columns 
(of any possible height -- \emph{i.e.}, number of cells -- from $1$ to the height of the current rightmost column), 
and topmost rows of width $1$. 
Note that the restriction on the width of the new row added is here only to ensure that no polyomino is produced several times. 
Note also that the mirror image of this growth rule, which allows rows of any admissible width but columns of height $1$ only, 
also describes a generating tree for parallelogram polyominoes, which is isomorphic to the first one.

\begin{figure}[ht]
\begin{center}
\includegraphics[scale=0.6]{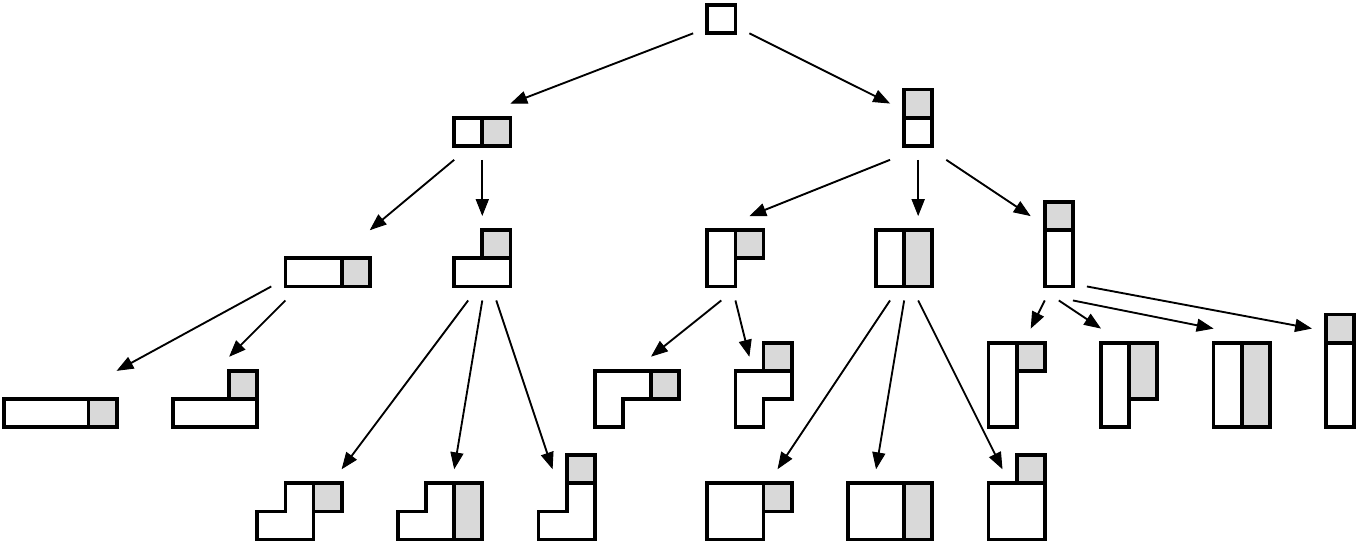}
\end{center}
\caption{The first levels of the generating tree of parallelogram polyominoes. \label{fig:growth_PP}}
\end{figure}

All that matters to us is the \emph{shape} of a generating tree, forgetting the combinatorial objects on the vertices. 
In what follows, we will use the phrase ``generating tree'' to denote this shape only, 
referring instead to ``full generating trees'' when the nodes are carrying combinatorial objects. 

Generating trees become substantially useful if they can be described in an abstract way, without referring to the details of the combinatorial objects. 
More precisely, for a combinatorial class $\mathcal{C}$, 
assuming that there is a statistic on the objects of $\mathcal{C}$ 
whose value determines the number of children in the full generating tree, 
then the (shape of the) generating tree depends only on how the value of the statistic evolves from an object to its children. 
When such a statistic exists, we give \emph{labels} to the objects of $\mathcal{C}$, 
which indicate the value of the statistic. 
The associated \emph{succession rule} is then given by 
the label of the root and, for any label $k$, the labels of the children of an object labeled by $k$. 
A succession rule characterizes completely a generating tree. 

In the case of parallelogram polyominoes, 
the number of children is determined by the height of the rightmost column (namely, it is this height $+1$), 
and it is easy to trace the height of the rightmost column as a polyomino grows in size. 
It follows that the generating tree of parallelogram polyominoes described above is 
completely determined by the following succession rule: 
\begin{equation}
\textrm{root labeled } (1) \textrm{ \quad and \quad } (k) \rightsquigarrow (1), (2), \ldots , (k), (k + 1). \tag{\textrm{Cat}} \label{rule:Catalan}
\end{equation}
We will denote this generating tree by $\TCat$ and its first levels are represented in Figure~\ref{fig:GeneratingTrees} (page~\pageref{fig:GeneratingTrees}). 

Note that, given a succession rule and its subsequent generating tree, 
we can associate with it an enumeration sequence, whose $n$-th term $c_n$ is the number of vertices in the tree at level $n$. 
Of course, $(c_n)$ is the enumeration sequence of any combinatorial class that has a (full) generating tree encoded by the given succession rule. 
But our point, which will be essential later on, is that the sequence may also be associated directly with the generating tree, 
without reference to any combinatorial class. 
In our example, it follows that rule \eqref{rule:Catalan} (and the corresponding tree $\TCat$) is associated with the Catalan numbers, hence its name. 

\section{Baxter slicings}
\label{sec:Baxter}

\subsection{A Baxter succession rule generalizing the Catalan rule}
\label{subsec:Cat_in_Bax}

There are several succession rules associated with Baxter numbers~\cite{BM,BouvelGuibert,Mishna2,Mishna1}. 
We will be interested in one of these rules only which, in addition to being the most well-known, 
is the one that generalizes the rule for Catalan numbers in the most natural way. The rule is: 

\begin{equation}
\textrm{root labeled } (1,1) \textrm{ \quad and \quad } (h,k) \rightsquigarrow \begin{cases}
                        (1, k + 1), (2, k + 1), \ldots , (h, k + 1), \\
                        (h + 1, 1), (h + 1, 2), \ldots,  (h + 1, k).
                       \end{cases} \tag{\textrm{Bax}} \label{rule:BaxterMBM}
\end{equation}

We denote by $\TBax$ the generating tree associated with this rule, and illustrate it in Figure~\ref{fig:GeneratingTrees} (page~\pageref{fig:GeneratingTrees}). 
A proof that it corresponds to Baxter numbers can be found in~\cite{BM,Gire}, 
where it is proved that Baxter permutations grow according to rule~\eqref{rule:BaxterMBM}. 
Recall that Baxter permutations are those avoiding the vincular patterns $2\underbracket[.5pt][1pt]{41}3$ and $3\underbracket[.5pt][1pt]{14}2$, 
\emph{i.e.}~permutations $\sigma$ such that no subsequence $\sigma_i \sigma_j \sigma_{j+1} \sigma_k$ satisfies  
$\sigma_{j+1} < \sigma_i < \sigma_k < \sigma_j$ or $\sigma_{j} < \sigma_k < \sigma_i < \sigma_{j+1}$.

It is easily seen, however rarely noticed, that rule~\eqref{rule:BaxterMBM} generalizes rule~\eqref{rule:Catalan}. 
Indeed, the production of label $(h,k)$ in rule~\eqref{rule:BaxterMBM} includes 
labels $(h+1,i)$ for $1 \leq i \leq k$ (in the second row of the production) and label $(1,k+1)$ (among others, in the first row of the production).
Keeping track only of the second element of these labels gives back the Catalan rule~\eqref{rule:Catalan}. 
(Observe that, comparing the growth of Baxter slicings -- defined later -- with that of parallelogram polyominoes, 
it is natural to keep the label $(1,k+1)$ in the first row, rather than, for instance, $(h,k+1)$.)

In some sense, rule~\eqref{rule:BaxterMBM} is just the symmetric version of rule~\eqref{rule:Catalan}. 
This is easy to see by considering the growth of parallelogram polyominoes according to rule~\eqref{rule:Catalan}. 
As we have seen, with rule~\eqref{rule:Catalan}, a rightmost column may be added, of all possible heights; 
but only a topmost row of width $1$ is allowed. 
But the symmetric variant of this rule, 
allowing addition of a topmost row of all possible widths, and of a rightmost column of height $1$, 
also works. 
So we can think of rule~\eqref{rule:BaxterMBM} as generating parallelogram polyominoes symmetrically, 
allowing at the same time the insertion of a rightmost column of any possible height, or of a topmost row of any possible width. 
Of course, this process generates the parallelogram polyominoes ambiguously.

\subsection{Definition and growth of Baxter slicings}

Our remark that rule~\eqref{rule:BaxterMBM} generates parallelogram polyominoes symmetrically but ambiguously 
motivates the definition of new combinatorial objects that generalize parallelogram polyominoes and grow unambiguously according to rule~\eqref{rule:BaxterMBM}. 
From the discussion above, the natural generalization is to let parallelogram polyominoes grow according to rule~\eqref{rule:BaxterMBM} as we explain, 
but to record the ``building history'' of the polyomino, that is, which columns and rows where added by the growth process. 
The objects obtained are parallelogram polyominoes whose interior is divided into blocks, of width or height $1$. 
We call these objects \emph{Baxter slicings of parallelogram polyominoes}, or \emph{Baxter slicings} for short. 

\begin{definition}
A \emph{Baxter slicing} (see an example in Figure~\ref{fig:PP}(b)) of size $n$ is a parallelogram polyomino $P$ of size $n$ whose interior is recursively divided into $n$ blocks as follows. 
\begin{itemize}
 \item The first block is the topmost row or the rightmost column of $P$ -- such blocks are called \emph{horizontal} and \emph{vertical} blocks, respectively. 
 \item The first block may be the topmost row (resp.~rightmost column) of $P$ only if the removal of the topmost row (resp.~rightmost column) of $P$ makes its semi-perimeter decrease by exactly $1$. 
 (Note that at least one of these two statements holds.)
 \item The other $n-1$ blocks form a Baxter slicing of the parallelogram polyomino of size $n-1$ obtained by deletion of the topmost row (resp.~rightmost column) of $P$. 
\end{itemize}
\label{dfn:Baxter_slicing}
\end{definition}

In the second item above, note that the condition that the semi-perimeter decreases by exactly $1$ 
is equivalent to the fact that 
the topmost row and the row below it end in the same rightmost column 
(resp. the rightmost column and the column to its left ends in the same top row). 

\begin{theorem}
Baxter slicings grow according to rule~\eqref{rule:BaxterMBM} and are enumerated by Baxter numbers. 
\label{thm:growth_Baxter_slicings}
\end{theorem}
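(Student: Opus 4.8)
The plan is to equip every Baxter slicing with a two-component label and to verify that, under the growth built into Definition~\ref{dfn:Baxter_slicing}, these labels evolve exactly as rule~\eqref{rule:BaxterMBM} prescribes; the enumeration by Baxter numbers is then immediate, since \eqref{rule:BaxterMBM} is already known to encode the Baxter numbers (via the growth of Baxter permutations, \cite{BM,Gire}). Concretely, to a Baxter slicing $S$ with underlying parallelogram polyomino $P$ I would attach the label $(h,k)$, where $h$ is the width of the topmost row of $P$ and $k$ is the height of its rightmost column. The unique slicing of size $1$ is a single cell, whose topmost row has width $1$ and whose rightmost column has height $1$, so it receives the root label $(1,1)$, as required.

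The heart of the argument is the analysis of a single growth step, that is, the addition of one block. I would first isolate the numerical constraint hidden in the recursive definition: a slicing of size $n$ has exactly $n$ blocks, and deleting its outermost block must return a slicing of size $n-1$, so that block must decrease the size of $P$ (equal to $\mathrm{width}+\mathrm{height}-1$) by exactly $1$. For a topmost row (a height-$1$ horizontal strip added on top) this forces the width to be preserved, hence the new row is right-justified and overlaps the former topmost row; writing $(h,k)$ for the parent label, the admissible widths are exactly $w\in\{1,\dots,h\}$, and such an addition places a new cell on top of the old rightmost column, raising its height to $k+1$ and producing the child label $(w,k+1)$. Dually, for a rightmost column (a width-$1$ vertical strip on the right) the size constraint forbids the column from protruding above the current top: it must be top-aligned, so its admissible heights are exactly $j\in\{1,\dots,k\}$, its top cell joins the topmost row and lengthens it to width $h+1$, yielding the child label $(h+1,j)$. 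Collecting the two families gives precisely the children $(1,k+1),\dots,(h,k+1),(h+1,1),\dots,(h+1,k)$ of rule~\eqref{rule:BaxterMBM}.

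It then remains to confirm that this growth is unambiguous, so that it really defines a generating tree: every Baxter slicing of size $n\ge 2$ must come from a unique parent. This follows from the recursive definition together with the observation that the topmost row and the rightmost column of $P$ meet only in the top-right corner cell; since that cell lies in exactly one block, the outermost block of $S$ is unambiguously either the whole topmost row or the whole rightmost column, and removing it yields the unique parent of size $n-1$. Conversely the two block-insertions above are injective with disjoint images (they differ in whether width or height is incremented), so distinct additions give distinct children. Hence the full generating tree of Baxter slicings is isomorphic to $\TBax$, and the enumeration by Baxter numbers follows at once.

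I expect the main technical obstacle to be the careful geometric bookkeeping of this single step, and especially the boundary cases $h=1$ and $k=1$. One must check that the size-preservation constraint genuinely excludes columns rising above the current top (otherwise infinitely many candidate children would appear), and that when $k=1$ (respectively $h=1$) the topmost row (respectively the rightmost column) cannot serve as the outermost block, so that exactly one of the two peelings is size-decreasing by $1$ and the decomposition stays well defined. Verifying that each peeling returns a bona fide parallelogram polyomino — tops and bottoms still weakly increasing, adjacent columns still edge-connected — is routine but should be spelled out to make the induction airtight.
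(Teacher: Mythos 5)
Your proposal is correct and follows essentially the same route as the paper: label a slicing by the width of its topmost row and the height of its rightmost column, and check that adding a right-justified topmost row of width $w\le h$ or a top-aligned rightmost column of height $j\le k$ produces exactly the children prescribed by rule~\eqref{rule:BaxterMBM}. The paper states this in two sentences as ``clear''; you supply the details it omits, in particular the verification that the outermost block (the one containing the top-right corner cell) is unique, so that every slicing has a unique parent and the growth genuinely defines a generating tree.
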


\begin{proof}
It is clear that Baxter slicings grow according to rule~\eqref{rule:BaxterMBM}: 
a Baxter slicing has label $(h,k)$ when the topmost row has width $h$ and the rightmost column has height $k$, 
and the productions of label $(h,k)$ are immediately seen to correspond to the Baxter slicings obtained by adding 
a new horizontal block in a new topmost row, of any width between $1$ and $h$, 
or a new vertical block in a new rightmost column, of any height between $1$ and $k$. 
As a consequence, Baxter slicings are enumerated by Baxter numbers.
\end{proof}

An example of the growth of Baxter slicings according to rule~\eqref{rule:BaxterMBM} is shown in Figure~\ref{fig:Baxter_growth}. 

\begin{figure}[ht]
\begin{center}
\includegraphics[scale=0.6]{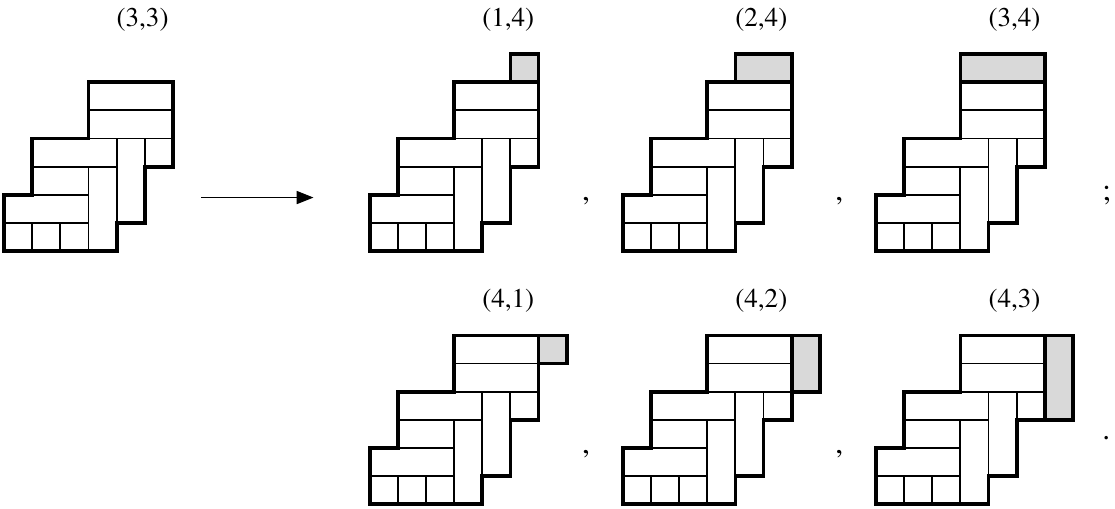}
\end{center}
\caption{The growth of Baxter slicings following rule~\eqref{rule:BaxterMBM}. \label{fig:Baxter_growth}}
\end{figure}

\subsection{Bijection with triples of NILPs}

Among the combinatorial families enumerated by Baxter numbers, one can be seen to be in bijection with Baxter slicings in a very simple way, 
namely, the triples of NILPs. 

\begin{definition}
A \emph{path} of size $n$ is a sequence of North ($N = (0,1)$) and East ($E=(1,0)$) steps, containing $n-1$ steps in total. 
Given three paths $u$, $m$, and $d$ of the same size $n$, 
all containing the same number of $E$ (and $N$) steps, 
$(u,m,d)$ is a \emph{triple of non-intersecting lattice paths} (for short, triple of NILPs) of size $n$ 
when the embeddings of $u$, $m$ and $d$ in the plane never meet, 
with $u$ (resp.~$m$, resp.~$d$) starting at the point of coordinates $(0,2)$ (resp.~$(1,1)$, resp.~$(2,0)$). 
\label{dfn:NILP}
\end{definition}
%

\begin{theorem}
The following construction, illustrated in Figure~\ref{fig:PP}(c), provides a size-preserving bijection between Baxter slicings and triples of NILPs:
\vspace{.2em}
\setlist{nolistsep}
\begin{description}[labelindent=.6cm]
\item[] Consider a Baxter slicing of a parallelogram polyomino $P$, whose bottom-left corner is assumed to be placed at coordinates $(0,0)$. 
Define the paths 
\begin{description}[noitemsep,labelindent=.5cm]
\item[-] $u$ corresponding to the upper border of $P$, except the first and last steps,
\item[-] $d$ corresponding to the lower border of $P$, except the first and last steps,
\item[-] and $m$ going from $(1,1)$ to the top-right corner of $P$, following the lower border of every horizontal block of the slicing, 
and the left border of every vertical block,
\end{description}
and associate the triple $(u,m,d)$ to the original Baxter slicing.
\end{description}
\label{thm:bijection_Baxter_slicings_NILPs}
\end{theorem}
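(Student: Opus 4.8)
The plan is to show the map lands in the set of triples of NILPs (well-definedness) and then to invert it explicitly, reducing the whole argument to a single monotonicity statement about the middle path.

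\textbf{Well-definedness and step counts.} Place $P$ with bottom-left corner at $(0,0)$ and top-right corner at $(c,r)$, so that its bounding rectangle is $c\times r$ and its size is $n=c+r-1$. The upper border $U$ of $P$ runs from $(0,0)$ to $(c,r)$ with first step $N$ and last step $E$, and the lower border $D$ runs from $(0,0)$ to $(c,r)$ with first step $E$ and last step $N$. Deleting the first and last steps, both $u$ and $d$ keep exactly $c-1$ East and $r-1$ North steps; and $m$, going from $(1,1)$ to $(c,r)$, also has $c-1$ East and $r-1$ North steps. Hence $u,m,d$ are three paths of common size $n$ with equal numbers of $E$ (and of $N$) steps, as required by Definition~\ref{dfn:NILP}, with the starting points $(0,2),(1,1),(2,0)$ imposed by the construction. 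First I would record this count, since it simultaneously pins down the size-preservation.

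\textbf{Non-intersection.} Because the three paths start on the anti-diagonal $x+y=2$ and take only $N,E$ steps, after $t$ steps each lies on the anti-diagonal $x+y=2+t$; consequently two of them can meet only after the same number of steps. Writing $\bar e_p(t)$ for the number of $E$ steps among the first $t$ steps of a path $p$, comparing positions gives $u(t)=m(t)$ iff $\bar e_u(t)=\bar e_m(t)+1$, $m(t)=d(t)$ iff $\bar e_m(t)=\bar e_d(t)+1$, and $u(t)=d(t)$ iff $\bar e_u(t)=\bar e_d(t)+2$. Thus the triple is non-intersecting as soon as
\[
\bar e_u(t)\le \bar e_m(t)\le \bar e_d(t)\qquad\text{for all }t.
\]
The outer inequality $\bar e_u(t)\le\bar e_d(t)$ is exactly the statement that $U$ lies strictly above $D$ away from the two shared corners, which holds because $P$ is a parallelogram polyomino. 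The heart of the matter is the middle inequality, \emph{i.e.}~that $m$ stays weakly between $u$ and $d$. I would prove it by induction on the size, following Definition~\ref{dfn:Baxter_slicing}: deleting the last block (the topmost row or the rightmost column) shortens $u,m,d$ in a controlled way, and one checks that the prefix inequalities are preserved when a horizontal block is stacked on top or a vertical block is appended on the right (equivalently, one can track this along the growth of Theorem~\ref{thm:growth_Baxter_slicings}). I expect this betweenness of $m$ to be the main obstacle, as it is the only place where the precise routing of $m$ (lower borders of horizontal blocks, left borders of vertical blocks) genuinely enters.

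\textbf{Bijectivity via an explicit inverse.} Given a triple of NILPs $(u,m,d)$, I would set the upper border of $P$ to be $N\,u\,E$ and its lower border to be $E\,d\,N$; since the triple is non-intersecting, $u$ lies above $d$, and adding the corner steps makes these two paths meet only at $(0,0)$ and $(c,r)$, so they bound a genuine parallelogram polyomino $P$. The middle path $m$, lying strictly between them, cuts the interior of $P$ into an upper-left region (between $U$ and $m$) and a lower-right region (between $m$ and $d$); slicing the former into its rows and the latter into its columns recovers a block decomposition in which every horizontal block has $m$ along its lower border and every vertical block has $m$ along its left border. One checks (again by induction along the growth) that this decomposition satisfies Definition~\ref{dfn:Baxter_slicing}, and that the two maps are mutually inverse, since each ingredient --- the borders from $(u,d)$ and the block partition from $m$ --- is read off and rebuilt reversibly. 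As every step preserves the size, this yields the desired size-preserving bijection.
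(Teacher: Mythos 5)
Your overall strategy --- verify that the image is a triple of NILPs, then exhibit an explicit inverse --- is exactly the paper's, and your geometric step-count argument (both borders of $P$ join $(0,0)$ to $(c,r)$, so after trimming the corner steps all three paths have $c-1$ East and $r-1$ North steps) is a clean substitute for the paper's block-matching count. The one place where you stop short is precisely the point you flag as ``the main obstacle'': the betweenness $\bar e_u(t)\le\bar e_m(t)\le\bar e_d(t)$. You propose an induction along the growth but do not carry it out; in fact no induction is needed, since every step of $m$ is by construction an edge of some block of the slicing and hence lies inside or on the border of $P$, which gives both inequalities at once (this is the paper's one-line argument, and you should just say it). Two smaller points on the inverse: your ``slice the region between $U$ and $m$ into rows, the region between $m$ and $d$ into columns'' prescription agrees with the paper's (which attaches a horizontal block to each $N$ step of $m$ and a vertical block to each $E$ step of $m$), but it leaves the unit cell with corners $(0,0)$ and $(1,1)$ unassigned --- it sits below and to the left of the starting point of $m$ and belongs to neither of your two regions, and the paper has to add it explicitly as the initial block; and the check that the reconstructed block division actually satisfies Definition~\ref{dfn:Baxter_slicing} is again deferred to an unexecuted induction, though here the paper is equally terse, so this is not a defect relative to it.
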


\begin{proof}
Consider a Baxter slicing of a parallelogram polyomino $P$ of size $n$, and define $u,m$ and $d$ as above. 
Shifting by one the path $u$ (resp.~$d$) upwards (resp.~rightwards) so that the starting point is at $(0,2)$ (resp.~$(2,0)$), 
we want to prove $(u,m,d)$ is a triple of NILPs of size $n$. Note that by construction each step of the path $m$ is inside or on the border of the polyomino $P$; 
this immediately ensures the non-intersecting property. Moreover, by construction all paths $u,m$ and $d$ have $n-1$ steps, where $n+1$ denotes the semi-perimeter of $P$. 
Finally, we easily check that $u,m$ and $d$ have the same number of $E$ and $N$ steps. 
This follows immediately comparing the coordinates of the ending points of these paths. 
But it is also helpful (in the description of the inverse below or for the proof of Theorem~\ref{thm:bijection_to_Schroeder_NILPs}) to see it as we now explain. 
Since the path $m$ separates the horizontal blocks, which remain above it, 
from the vertical ones, which remain below it, each step of this path is either the right edge of a horizontal block or the upper edge of a vertical block. 
Then, the paths $u$ and $m$ have the same number of $N$ steps, 
as each $N$ step of the path $u$ is the left edge of a horizontal block.
Similarly, the paths $d$ and $m$ have the same number of $E$ steps, $E$ steps in $d$
corresponding to lower edges of vertical blocks.

To prove that this construction is a bijection, we describe its inverse. 
Any triple $(u,m,d)$ such as in Definition \ref{dfn:NILP} corresponds to a unique Baxter slicing of a parallelogram polyomino $P$, 
whose contour is defined by $u$ and $d$ and whose block division is obtained by $m$. 
More precisely, the lower (resp. upper) border of $P$ is the path $E\cdot d \cdot N$ (resp. $N \cdot u \cdot E$) drawn starting at $(0,0)$. 
Let the starting point of the path $m$ be $(1,1)$. Then, the blocks inside $P$ are drawn according to the steps of $m$ as follows.
For every $E$ step $s$ in $m$, draw a vertical block whose top edge is $s$ 
and that extends downwards until the border of $P$. 
Similarly, for every $N$ step $s$ in $m$, draw a horizontal block whose right edge is $s$ 
and that extends leftwards until the border of $P$.
And finally, add the initial block consisting of one cell extending from $(0,0)$ to $(1,1)$.
\end{proof}

Up to the simple bijective correspondence described in Theorem~\ref{thm:bijection_Baxter_slicings_NILPs}, 
our Theorem~\ref{thm:growth_Baxter_slicings} can also be seen as a description of the growth of triples of NILPs 
according to the generating tree $\TBax$, which was already described in~\cite{mbm}. 

\subsection{Baxter slicings of a given shape}

One of the most basic enumerative questions that one may ask about Baxter slicings is to determine 
the number of Baxter slicings whose shape is a given parallelogram polyomino $P$. 
In the light of the previous bijection between Baxter slicings and triples of NILPs, this question can be rephrased in terms of counting how many triples of NILPs correspond to given ``external" paths (\emph{i.e.} $u$ and $d$), which are the two paths defining $P$.
This is not the main focus of our work, so we just give the extremal cases as observations. 

\begin{observation}
Let $P$ be the parallelogram polyomino of rectangular shape, whose bounding rectangle has dimensions $k \times \ell$. 
The number of Baxter slicings of $P$ is $\binom{k+\ell-2}{\ell-1}$. 
\end{observation}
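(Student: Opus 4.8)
The plan is to exploit the bijection of Theorem~\ref{thm:bijection_Baxter_slicings_NILPs}, which reduces the problem to counting the admissible middle paths $m$ once the outer pair $(u,d)$ is fixed to be the contour of $P$. For the rectangle with bounding box $k\times\ell$, placing the bottom-left corner at $(0,0)$, the top-right corner is $(k,\ell)$, so all associated triples share the same $u$ and $d$, and $m$ ranges over the monotone lattice paths from $(1,1)$ to $(k,\ell)$ that do not meet the (shifted) paths $u$ and $d$. Any such $m$ uses exactly $k-1$ East and $\ell-1$ North steps.

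First I would write down the two outer paths explicitly. The upper border of $P$ is $N^{\ell}E^{k}$, so after deleting its first and last steps and shifting upwards, $u$ runs from $(0,2)$ to $(k-1,\ell+1)$ as $N^{\ell-1}E^{k-1}$; symmetrically, $d$ runs from $(2,0)$ to $(k+1,\ell-1)$ as $E^{k-1}N^{\ell-1}$.

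The key step --- and the only point that genuinely requires an argument --- is to show that for this rectangular shape the non-intersecting constraint is vacuous, i.e.\ every monotone path $m$ from $(1,1)$ to $(k,\ell)$ is automatically admissible. This follows from a direct separation estimate: every lattice point of $m$ satisfies $x\ge 1$ and $y\le \ell$, whereas every point of the shifted $u$ satisfies $x=0$ or $y=\ell+1$; and every point of $m$ satisfies $y\ge 1$ and $x\le k$, whereas every point of the shifted $d$ satisfies $y=0$ or $x=k+1$. Hence $m$ shares no point with either $u$ or $d$, so the three paths never meet. Consequently the Baxter slicings of $P$ are in bijection with all monotone paths from $(1,1)$ to $(k,\ell)$, of which there are exactly $\binom{(k-1)+(\ell-1)}{\ell-1}=\binom{k+\ell-2}{\ell-1}$, as claimed.

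A self-contained alternative avoids the bijection and works straight from Definition~\ref{dfn:Baxter_slicing}. Writing $B(k,\ell)$ for the number sought, the outermost block of any Baxter slicing of the $k\times\ell$ rectangle is either its full topmost row or its full rightmost column; deleting it leaves a Baxter slicing of the $k\times(\ell-1)$ or the $(k-1)\times\ell$ rectangle, giving $B(k,\ell)=B(k,\ell-1)+B(k-1,\ell)$. The point to check here is that this is a genuine disjoint union: a slicing could be double-counted only if its topmost row and its rightmost column were simultaneously single blocks, but for $k,\ell\ge 2$ these two blocks would both contain the top-right cell, which is impossible. Together with the base cases $B(1,\ell)=B(k,1)=1$, this Pascal recurrence again yields $\binom{k+\ell-2}{\ell-1}$. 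I expect the geometric separation (resp.\ the disjointness of the two peeling cases) to be the only real obstacle, everything else being routine bookkeeping.
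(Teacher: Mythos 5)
Your primary argument is exactly the paper's proof: the paper simply observes that, via the bijection of Theorem~\ref{thm:bijection_Baxter_slicings_NILPs}, the slicings of the $k\times\ell$ rectangle correspond to the monotone paths from $(1,1)$ to $(k,\ell)$, and you have correctly filled in the (easy) verification that the non-intersection constraint is vacuous here. Your self-contained Pascal-recurrence alternative is also valid and a nice bonus, but the paper does not take that route.
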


\begin{proof}
This follows from Theorem \ref{thm:bijection_Baxter_slicings_NILPs}, 
since the number of Baxter slicings of $P$ coincides with the number of paths from $(1,1)$ to $(k,\ell)$ using $N$ and $E$ steps. 
\end{proof}

\begin{observation}
Let $P$ be a snake, that is, a parallelogram polyomino not containing four cells placed as \tikz{
\begin{scope}[scale=.2]
\draw[help lines] (0,0) grid (2,2);
\end{scope}
}\,. 
There is only one Baxter slicing of $P$. 
\end{observation}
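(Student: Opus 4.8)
The plan is to prove the Observation via the bijection of Theorem~\ref{thm:bijection_Baxter_slicings_NILPs}, which reduces counting Baxter slicings of a fixed shape $P$ to counting the admissible middle paths $m$, since the outer paths $u$ and $d$ are determined by $P$. Concretely, a Baxter slicing of shape $P$ corresponds to a lattice path $m$ from $(1,1)$ to the top-right corner of $P$, using $N$ and $E$ steps, that stays strictly between $u$ and $d$ (\emph{i.e.}~does not meet the embeddings of $u$ and $d$). So the claim ``there is only one Baxter slicing of $P$'' is equivalent to the claim that, when $P$ is a snake, there is exactly one such path $m$.

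First I would recall what the snake condition means geometrically. The forbidden $2\times 2$ configuration says that $P$ never contains a full $2\times 2$ square of cells, so at every stage the polyomino is only one cell ``thick'': each maximal horizontal run of cells and each maximal vertical run of cells has width (respectively height) exactly $1$ away from the bends. In particular, the interior of $P$ leaves no room for the middle path to deviate: the region strictly between the upper border $u$ and the lower border $d$ is itself a thin, snake-like corridor of width one, through which only a single monotone lattice path can pass.

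The key step is then to argue that this corridor forces $m$ uniquely. I would proceed by induction on the size $n$ of $P$, using the recursive structure already built into Definition~\ref{dfn:Baxter_slicing}: deleting the topmost row or rightmost column of a snake yields a smaller snake (the deletion cannot create a $2\times 2$ block, and the remaining polyomino is still a parallelogram polyomino). At each step, the snake condition forces the newly added block to have width or height exactly $1$ -- if the topmost row had width $\geq 2$ sitting on a column of height $\geq 2$, we would produce a $2\times 2$ square -- so in the succession rule~\eqref{rule:BaxterMBM} only the single production corresponding to the forced block is available. Hence the full generating tree has no branching along any snake shape, yielding exactly one slicing. Equivalently, in the NILP picture, each step of $m$ is pinned against one of the two outer borders because the width-one corridor admits a unique monotone path.

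The main obstacle will be making the ``width-one corridor forces a unique path'' statement fully rigorous rather than merely visually convincing: I need a clean characterization of the snake condition (either as the absence of a $2\times 2$ sub-square, or equivalently as every internal vertex of the bounding region having a constrained local shape) and then a careful argument that at each cell the path $m$ has only one admissible continuation. The inductive approach via Definition~\ref{dfn:Baxter_slicing} sidesteps most of this by reducing everything to the observation that, for a snake, each of the two productions in~\eqref{rule:BaxterMBM} collapses to a single choice; verifying that deletion of the extremal row/column preserves the snake property, and that the forced block has the claimed dimension, is the crux and should be handled explicitly.
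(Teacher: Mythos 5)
Your overall plan---induct on the size, peel off the last block of the slicing, and use the absence of a $2\times 2$ square to control that block---is exactly the strategy of the paper's proof, and your parenthetical remark (a topmost row of width $\geq 2$ together with a rightmost column of height $\geq 2$ would create a $2\times 2$ square at the top-right corner) is precisely the dichotomy the paper exploits: the top-right cell of a snake is the only cell in its row or the only cell in its column.

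The step where you pass from this dichotomy to uniqueness, however, has a genuine gap. First, the assertion that ``the full generating tree has no branching along any snake shape'' is false: a snake with label $(h,k)$ still has $h+k$ children in $\TBax$, and two of them (add a single cell on top, add a single cell on the right) are again snakes. What you actually need is the top-down statement that a snake shape determines its \emph{last} block uniquely, i.e.\ that each snake has a unique parent. Second, the $2\times 2$ argument alone does not rule out the competing peeling. Take the L-shaped tromino whose topmost row has width $1$ and whose rightmost column has height $2$: declaring that column to be the last (vertical) block creates no $2\times 2$ square, so nothing in your argument excludes it. It is excluded only because deleting that column also deletes the entire topmost row, so the remainder has semi-perimeter smaller by $2$ and is a polyomino of size $n-2$ rather than $n-1$, contradicting Definition~\ref{dfn:Baxter_slicing} (equivalently: adding a vertical block always produces a topmost row of width $\geq 2$, so a shape whose topmost row has width $1$ can only arise from a horizontal insertion). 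Once you add this size/bookkeeping argument, together with the easy remark that connectivity prevents the top-right cell from being isolated in both its row and its column when $n\geq 2$ (so exactly one of the two peelings survives, and the surviving block is a single cell), your induction closes and coincides with the paper's proof. The NILP ``width-one corridor'' picture is a reasonable heuristic but, as you note yourself, it does no work here and can be dropped.
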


\begin{proof}
We prove that if $P$ is a snake of size $n$, then its interior is unambiguously divided in $n$ blocks, each consisting of a single cell. 
Since $P$ does not contain \tikz{
\begin{scope}[scale=.2]
\draw[help lines] (0,0) grid (2,2);
\end{scope}
}\,, then the topmost cell in the rightmost column is the only cell in its row or the only cell in its column. 
In the former (resp.~latter) case, it forms a horizontal (resp.~vertical) block. 
Removing this block from $P$, the remaining cells form a snake of size $n-1$, 
and the result follows by induction. 
\end{proof}

\section{Schr\"oder slicings}
\label{sec:Schroeder}

Our first interest in defining Baxter slicings is to find a family of objects enumerated by the Schr\"oder numbers 
which lie between parallelogram polyominoes and Baxter slicings, 
and which grow according to a succession rule that generalizes~\eqref{rule:Catalan} while specializing~\eqref{rule:BaxterMBM}. 
Note that to our knowledge, out of the many succession rules for Schr\"oder numbers~\cite{PS,West},
none has this property.

\subsection{Specializations and generalizations of succession rules\label{sec:gen-spec}}

Earlier in this article, we have compared the succession rules for Catalan numbers and Baxter numbers, 
and described how the latter generalizes the former. 
To deal with this first example, it was enough to stay at an informal level of what we mean by ``generalize''. 
However, in the remainder of the paper, we wish to compare more succession rules, 
and we offer to that effect a formalization of this informal idea that a succession rule generalizes (or conversely specializes) another one.

The following definition of generalizing (resp. specializing) a succession rule is more intended as a suggestion the reader may reflect on, than as a proper and accurate definition. 
This proposed definition is rather restrictive, and we shall see later some examples of situations that it does not encapsulate. 
These examples indeed do not fit into our idea of what generalization/specialization of succession rules should be. 
Despite the restrictive character of our proposed definition, we believe that it applies to all our examples of the current paper, and of the other papers~\cite{BijectionNick,SemiAndStrong}. 
We leave open the questions whether the ``correct'' definition should be a bit less restrictive to allow for more instances to fit in, 
and whether it should be on the contrary more restrictive, to prevent other undesirable examples.

\medskip

Consider two succession rules $\Omega_{\mathcal{A}}$ and $\Omega_{\mathcal{B}}$. 
These rules are to be thought of as encoding growths of combinatorial classes $\mathcal{A}$ and $\mathcal{B} \subseteq \mathcal{A}$, respectively. 
We however do not refer at all to the classes $\mathcal{A}$ and $\mathcal{B}$ in defining that $\Omega_{\mathcal{A}}$ generalizes $\Omega_{\mathcal{B}}$, 
but rather work directly on these succession rules.
To say that $\Omega_{\mathcal{A}}$ {\em generalizes} $\Omega_{\mathcal{B}}$ (or equivalently, that $\Omega_{\mathcal{B}}$ {\em specializes} $\Omega_{\mathcal{A}}$), 
we require: \\ 
\hspace*{0.4cm}(1) the existence of a comparison relation ``smaller than or equal to'' (to be defined specifically on each example) 
between the labels\footnote{Labels are integers or ordered pairs of integers in the current paper, 
but they may be more complicated structures in general.} of $\Omega_{\mathcal{B}}$ and those of $\Omega_{\mathcal{A}}$, 
and, \\
\hspace*{0.4cm}(2) for any labels $\ell_A$ of $\Omega_{\mathcal{A}}$ and $\ell_B$ of $\Omega_{\mathcal{B}}$ with $\ell_B$ smaller than or equal to $\ell_A$, 
a way of mapping the productions of the label $\ell_B$ in $\Omega_{\mathcal{B}}$ to a subset of the productions of the label $\ell_A$ in $\Omega_{\mathcal{A}}$, 
such that a label is always mapped to a larger or equal one. 

The emblematic example of this definition is given by rule~\eqref{rule:Catalan} as specialization of rule~\eqref{rule:BaxterMBM}. 
A label $\ell_B$ of rule~\eqref{rule:Catalan} is an array of one integer value, say $(j)$, 
whereas a label $\ell_A$ of rule~\eqref{rule:BaxterMBM} is an array of two integer values, say $(h,k)$. 
The relation ``smaller than or equal to'' between $\ell_B$ and $\ell_A$ is defined by $j = k$. 
(Note that one could symmetrically  consider the comparison between $j$ and the first component $h$ of $\ell_A$.)
Moreover, the way of mapping the productions of the label $\ell_B$ in~\eqref{rule:Catalan} to a subset of the productions of the label $\ell_A$ in~\eqref{rule:BaxterMBM}
so that any label is always mapped to a larger or equal one is defined as follows:
\[
\begin{array}{lcccccccc}
\textrm{root labeled }(1)&\textrm{and }&(k)&\underset{\eqref{rule:Catalan}}{\rightsquigarrow} &(1),&(2),& \ldots &(k),&(k+1).\\
\qquad\quad\big\downarrow &&\big\downarrow & & \big\downarrow &\big\downarrow &\ldots&\big\downarrow & \big\downarrow\\[.5em]
\textrm{root labeled }(1,1)&\textrm{and }&(h,k)& \underset{\eqref{rule:BaxterMBM}}{\rightsquigarrow}& (h+1,1),& (h+1,2),& \ldots& (h+1,k),& (1,k+1).
\end{array}
\]
Note that only the relevant subset of the productions of $(h,k)$ for rule~\eqref{rule:BaxterMBM} has been displayed in the second row of the above table. 

\medskip

The above definition allows to identify a canonical embedding of the generating tree associated with $\Omega_{\mathcal{B}}$ 
into the  generating tree associated with  $\Omega_{\mathcal{A}}$.
More precisely, let us denote by $\mathcal{T}_{\mathcal{A}}$ and $\mathcal{T}_{\mathcal{B}}$ the two generating trees 
associated with the classes $\mathcal{A}$ and $\mathcal{B}$ and their growths according to $\Omega_{\mathcal{A}}$ and $\Omega_{\mathcal{B}}$. 
Then, induction shows that the above definition implies the existence of an injection $\phi$ 
(corresponding to the mapping of labels in item (2) of the definition) 
from the set of vertices of $\mathcal{T}_{\mathcal{B}}$ to the set of vertices of $\mathcal{T}_{\mathcal{A}}$ 
which preserves the level and the parent-child relation, 
such that for any vertex $v$ of $\mathcal{T}_{\mathcal{B}}$, the label of $v$ is smaller than or equal to the label of $\phi(v)$ in $\mathcal{T}_{\mathcal{A}}$. 
This injection allows to define the {\em ``canonical'' subtree} of $\mathcal{T}_{\mathcal{A}}$ isomorphic to $\mathcal{T}_{\mathcal{B}}$. 

\medskip

We conclude this section about a proposed definition of generalization/specialization of succession rules with 
examples that do not fit into our proposed definition. 
If readers wish to consider variants of the above definition, it is important to keep these examples in mind: 
indeed, they display situations which we do not want to enter our framework. 

First, to say that $\Omega_{\mathcal{A}}$ generalizes $\Omega_{\mathcal{B}}$, 
it is not enough to know generating trees for combinatorial classes $\mathcal{A}$ and $\mathcal{B} \subseteq \mathcal{A}$, 
encoded by succession rules $\Omega_{\mathcal{A}}$ and $\Omega_{\mathcal{B}}$ respectively. 
Indeed, it may be the case that the underlying growths for the classes $\mathcal{A}$ and $\mathcal{B}$ have nothing in common. 
This applies for instance to Dyck and Motzkin paths, with their growths presented in~\cite{Eco}, 
or to families of pattern-avoiding inversion sequences (namely, avoiding the triple of relations $(\geq,-,\geq)$ and $(\geq,\geq,>)$, respectively) 
with their growths defined in~\cite{BijectionNick}.  

Second, we do not want the definition of $\Omega_{\mathcal{A}}$ generalizing $\Omega_{\mathcal{B}}$ 
to be dependent of the combinatorial classes $\mathcal{A}$ and $\mathcal{B}$. 
Namely, consider the case where the growth for $\mathcal{B}\subseteq \mathcal{A}$ corresponding to $\Omega_{\mathcal{B}}$ 
specializes the growth for $\mathcal{A}$ corresponding to $\Omega_{\mathcal{A}}$, 
in the sense that for any object $b$ of $\mathcal{B}$ (which is of course also an object of $\mathcal{A}$),
the set of {\em active sites} of $b$ as an object of  $\mathcal{B}$ 
is a subset of the set of active sites of $b$ as an object of $\mathcal{A}$. 
This does not guarantee that $\Omega_{\mathcal{A}}$ generalizes $\Omega_{\mathcal{B}}$ 
for our proposed definition. 
Indeed, it may be the case that the active sites are not encoded in the same way in the labels of $\Omega_{\mathcal{A}}$ and $\Omega_{\mathcal{B}}$. 
This happens for instance for the separable permutations (growth described in~\cite{West})
and the Baxter permutations (growth described in~\cite{BM}). 

\begin{landscape}
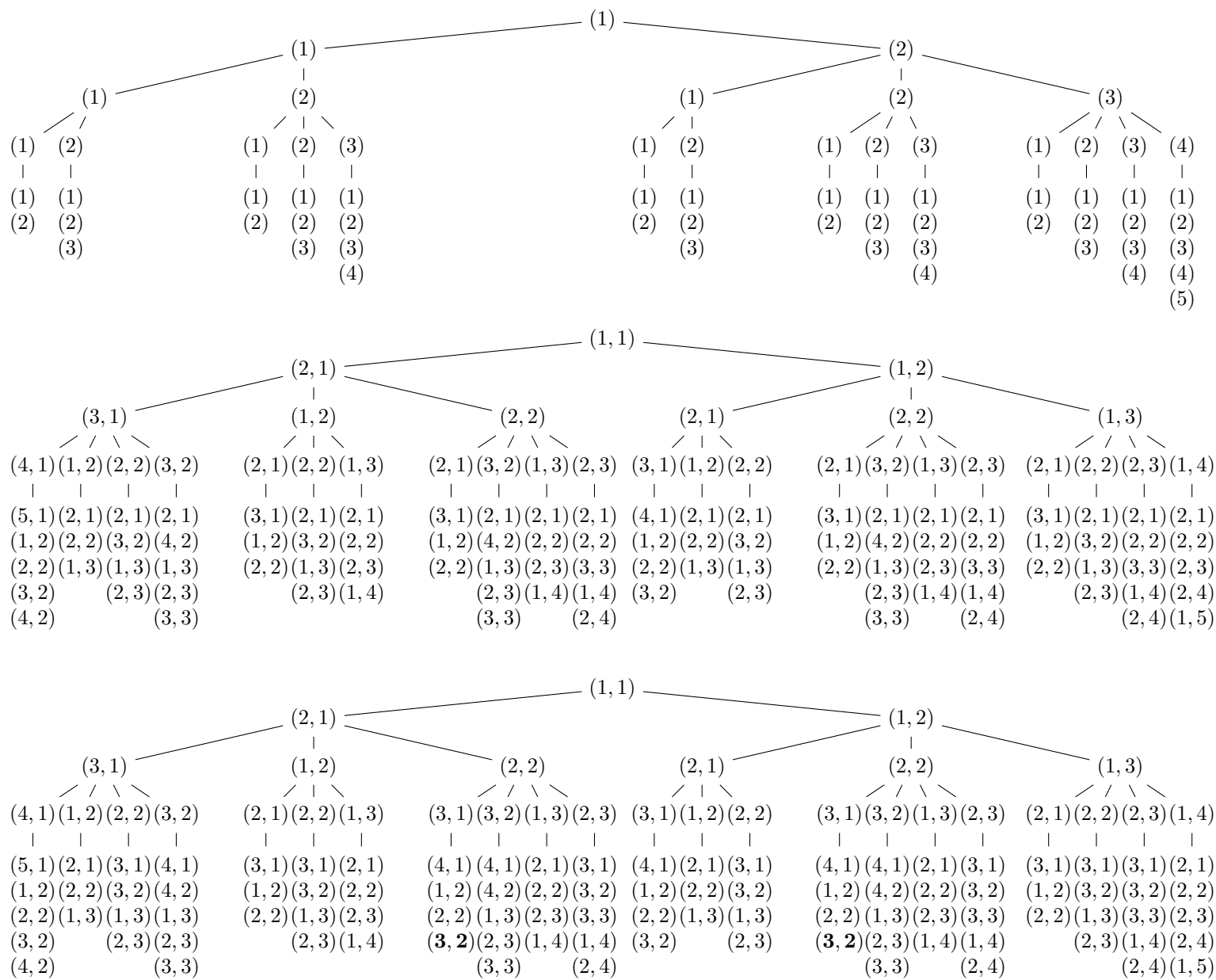
\begin{figure}[ht]
\vspace*{-1.5cm}
\begin{tikzpicture}[level distance=1.2cm]
\tikzstyle{level 1}=[sibling distance=10cm,level distance=0.5cm]
\tikzstyle{level 2}=[sibling distance=35mm,level distance=0.8cm]
\tikzstyle{level 3}=[sibling distance=8mm]
\tikzstyle{level 4}=[sibling distance=8mm,level distance=1.7cm]
\node {$(1)$} 
  child{ node {$(1)$}
    child { node {$(1)$} 
    child { node {$(1)$}child {node
    {$\begin{array}{l}
    (1)\\
    (2)\\~\\~\\~\end{array}$}}}
     child { node {$(2)$} child {node
    {$\begin{array}{l}
    (1)\\
    (2)\\(3)\\~\\~\end{array}$}}}
    child [missing]
    child [missing]
}
    child { node {$(2)$}
    child { node {$(1)$}child {node
    {$\begin{array}{l}
    (1)\\
    (2)\\~\\~\\~\end{array}$}}}
    child { node {$(2)$} child {node
    {$\begin{array}{l}
    (1)\\
    (2)\\(3)\\~\\~\end{array}$}}}
    child { node {$(3)$} child {node
    {$\begin{array}{l}
    (1)\\
    (2)\\(3)\\(4)\\~\end{array}$}}}}
    child [missing]
  }
  child{ node {$(2)$}
    child { node {$(1)$} 
    child { node {$(1)$} child {node
    {$\begin{array}{l}
    (1)\\
    (2)\\~\\~\\~\end{array}$}}}
    child { node {$(2)$} child {node
    {$\begin{array}{l}
    (1)\\
    (2)\\(3)\\~\\~\end{array}$}}}
    child [missing]
    }
    child { node {$(2)$}
    child { node {$(1)$} child {node
    {$\begin{array}{l}
    (1)\\
    (2)\\~\\~\\~\end{array}$}}}
    child { node {$(2)$} child {node
    {$\begin{array}{l}
    (1)\\
    (2)\\(3)\\~\\~\end{array}$}}}
    child { node {$(3)$} child {node
    {$\begin{array}{l}
    (1)\\
    (2)\\(3)\\(4)\\~\end{array}$}}}
    child [missing]
  }
    child { node {$(3)$} 
    child { node {$(1)$} child {node
    {$\begin{array}{l}
    (1)\\
    (2)\\~\\~\\~\end{array}$}}}
    child { node {$(2)$} child {node
    {$\begin{array}{l}
    (1)\\
    (2)\\(3)\\~\\~\end{array}$}}}
    child { node {$(3)$}child {node
    {$\begin{array}{l}
    (1)\\
    (2)\\(3)\\(4)\\~\end{array}$}}}
    child {node {$(4)$}child {node
    {$\begin{array}{l}
    (1)\\
    (2)\\(3)\\(4)\\(5)\end{array}$}}}
}
  }
;
\end{tikzpicture}
\begin{tikzpicture}[level distance=1.2cm]
\tikzstyle{level 1}=[sibling distance=10cm,level distance=0.5cm]
\tikzstyle{level 2}=[sibling distance=35mm,level distance=0.8cm]
\tikzstyle{level 3}=[sibling distance=8mm]
\tikzstyle{level 4}=[sibling distance=8mm,level distance=1.7cm]
\node {$(1,1)$}
      child{ node {$(2,1)$}
      child { node {$(3,1)$}
      child  { node {$(4,1)$}child {node
    {$\begin{array}{l}
    (5,1)\\
    (1,2)\\
    (2,2)\\(3,2)\\(4,2)\end{array}$}}}
    child { node {$(1,2)$} child {node
    {$\begin{array}{l}
    (2,1)\\(2,2)\\(1,3)\\~\\~\end{array}$}}}
    child { node {$(2,2)$} child {node
    {$\begin{array}{l}(2,1)\\(3,2)\\
    (1,3)\\
    (2,3)\\~\end{array}$}}}
    child  { node {$(3,2)$}child {node
    {$\begin{array}{l}(2,1)\\(4,2)\\
    (1,3)\\
    (2,3)\\(3,3) \end{array}$}}} 
    }
    child { node {$(1,2)$}
    child { node {$(2,1)$} child {node
    {$\begin{array}{l}(3,1)\\ 
    (1,2)\\
    (2,2)\\~\\~\end{array}$}}}
    child  { node {$(2,2)$}child {node
    {$\begin{array}{l}(2,1)\\(3,2)\\
    (1,3)\\
    (2,3)\\~\end{array}$}}}
    child { node {$(1,3)$} child {node
    {$\begin{array}{l}
    (2,1)\\(2,2)\\ (2,3)\\(1,4)\\~\end{array}$}}}}
    child { node {$(2,2)$}  
    child  { node {$(2,1)$} child {node
    {$\begin{array}{l}(3,1)\\
    (1,2)\\
    (2,2)\\ ~\\~\end{array}$}}} 
     child  { node {$(3,2)$}child {node
    {$\begin{array}{l}(2,1)\\(4,2)\\
    (1,3)\\
    (2,3)\\(3,3)\end{array}$}}}
    child { node {$(1,3)$} child {node
    {$\begin{array}{l}
    (2,1)\\(2,2)\\ (2,3)\\ (1,4)\\~\end{array}$}}}
    child { node {$(2,3)$} child {node
    {$\begin{array}{l}(2,1)\\(2,2)\\(3,3)\\
    (1,4)\\
    (2,4)\end{array}$}}}
    }
  }
    child{ node {$(1,2)$}
    child { node {$(2,1)$} 
    child  { node {$(3,1)$}
     child {node
    {$\begin{array}{l}(4,1)\\
    (1,2)\\
    (2,2)\\(3,2)\\~\end{array}$}}} 
    child { node {$(1,2)$} 
    child {node
    {$\begin{array}{l}
    (2,1)\\(2,2)\\(1,3)\\~\\~\end{array}$}}}
    child  { node {$(2,2)$}child {node
    {$\begin{array}{l}(2,1)\\(3,2)\\
    (1,3)\\
    (2,3)\\~\end{array}$}}}
     }
    child  { node {$(2,2)$}
    child  { node {$(2,1)$}child {node
    {$\begin{array}{l}(3,1)\\ 
    (1,2)\\
    (2,2)\\~\\~\end{array}$}}}
     child  { node {$(3,2)$} child {node
    {$\begin{array}{l}(2,1)\\(4,2)\\
    (1,3)\\
    (2,3)\\(3,3) \end{array}$}}}
    child { node {$(1,3)$} child {node
    {$\begin{array}{l}   
    (2,1)\\(2,2)\\(2,3)\\(1,4)\\~\end{array}$}}}
    child { node {$(2,3)$} child {node
    {$\begin{array}{l}(2,1)\\(2,2)\\(3,3)\\
    (1,4)\\
    (2,4)\\\end{array}$}}}
    }
    child { node {$(1,3)$} 
    child { node {$(2,1)$} 
    child {node
    {$\begin{array}{l}(3,1)\\
    (1,2)\\
    (2,2)\\ ~\\~\end{array}$}}}
    child  { node {$(2,2)$}
    child {node
    {$\begin{array}{l}(2,1)\\(3,2)\\
    (1,3)\\
    (2,3)\\~\end{array}$}}}
child  { node {$(2,3)$}
child {node
    {$\begin{array}{l}(2,1)\\(2,2)\\(3,3)\\
    (1,4)\\
    (2,4)\end{array}$}}}   
    child { node {$(1,4)$} 
    child {node
    {$\begin{array}{l}
    (2,1)\\(2,2)\\(2,3)\\(2,4)\\(1,5)\end{array}$}}}}}
;
\end{tikzpicture}

\vspace{0.5cm}

\begin{tikzpicture}[level distance=1.2cm]
\tikzstyle{level 1}=[sibling distance=10cm,level distance=0.5cm]
\tikzstyle{level 2}=[sibling distance=35mm,level distance=0.8cm]
\tikzstyle{level 3}=[sibling distance=8mm]
\tikzstyle{level 4}=[sibling distance=8mm,level distance=1.7cm]
\node {$(1,1)$}
      child{ node {$(2,1)$}
      child { node {$(3,1)$}
      child  { node {$(4,1)$}child {node
    {$\begin{array}{l}
    (5,1)\\
    (1,2)\\
    (2,2)\\(3,2)\\(4,2)\end{array}$}}}
    child { node {$(1,2)$} child {node
    {$\begin{array}{l}
    (2,1)\\(2,2)\\(1,3)\\~\\~\end{array}$}}}
    child { node {$(2,2)$} child {node
    {$\begin{array}{l}(3,1)\\(3,2)\\
    (1,3)\\
    (2,3)\\~\end{array}$}}}
    child  { node {$(3,2)$}child {node
    {$\begin{array}{l}(4,1)\\(4,2)\\
    (1,3)\\
    (2,3)\\(3,3) \end{array}$}}} 
    }
    child { node {$(1,2)$}
    child { node {$(2,1)$} child {node
    {$\begin{array}{l}(3,1)\\ 
    (1,2)\\
    (2,2)\\~\\~\end{array}$}}}
    child  { node {$(2,2)$}child {node
    {$\begin{array}{l}(3,1)\\(3,2)\\
    (1,3)\\
    (2,3)\\~\end{array}$}}}
    child { node {$(1,3)$} child {node
    {$\begin{array}{l}
    (2,1)\\(2,2)\\ (2,3)\\(1,4)\\~\end{array}$}}}}
    child { node {$(2,2)$}  
    child  { node {$(3,1)$} child {node
    {$\begin{array}{l}(4,1)\\
    (1,2)\\
    (2,2)\\ \mathbf{(3,2)}\\~\end{array}$}}} 
     child  { node {$(3,2)$}child {node
    {$\begin{array}{l}(4,1)\\(4,2)\\
    (1,3)\\
    (2,3)\\(3,3)\end{array}$}}}
    child { node {$(1,3)$} child {node
    {$\begin{array}{l}
    (2,1)\\(2,2)\\ (2,3)\\ (1,4)\\~\end{array}$}}}
    child { node {$(2,3)$} child {node
    {$\begin{array}{l}(3,1)\\(3,2)\\(3,3)\\
    (1,4)\\
    (2,4)\end{array}$}}}
    }}
    child{ node {$(1,2)$}
    child { node {$(2,1)$} 
    child  { node {$(3,1)$}
     child {node
    {$\begin{array}{l}(4,1)\\
    (1,2)\\
    (2,2)\\(3,2)\\~\end{array}$}}} 
    child { node {$(1,2)$} 
    child {node
    {$\begin{array}{l}
    (2,1)\\(2,2)\\(1,3)\\~\\~\end{array}$}}}
    child  { node {$(2,2)$}child {node
    {$\begin{array}{l}(3,1)\\(3,2)\\
    (1,3)\\
    (2,3)\\~\end{array}$}}}
     }
    child  { node {$(2,2)$}
    child  { node {$(3,1)$}child {node
    {$\begin{array}{l}(4,1)\\ 
    (1,2)\\
    (2,2)\\\mathbf{(3,2)}\\~\end{array}$}}}
     child  { node {$(3,2)$} child {node
    {$\begin{array}{l}(4,1)\\(4,2)\\
    (1,3)\\
    (2,3)\\(3,3) \end{array}$}}}
    child { node {$(1,3)$} child {node
    {$\begin{array}{l}   
    (2,1)\\(2,2)\\(2,3)\\(1,4)\\~\end{array}$}}}
    child { node {$(2,3)$} child {node
    {$\begin{array}{l}(3,1)\\(3,2)\\(3,3)\\
    (1,4)\\
    (2,4)\\\end{array}$}}}
    }
    child { node {$(1,3)$} 
    child { node {$(2,1)$} 
    child {node
    {$\begin{array}{l}(3,1)\\
    (1,2)\\
    (2,2)\\ ~\\~\end{array}$}}}
    child  { node {$(2,2)$}
    child {node
    {$\begin{array}{l}(3,1)\\(3,2)\\
    (1,3)\\
    (2,3)\\~\end{array}$}}}
     child  { node {$(2,3)$}
     child {node
    {$\begin{array}{l}(3,1)\\(3,2)\\(3,3)\\
    (1,4)\\
    (2,4)\end{array}$}}}   
    child { node {$(1,4)$} 
    child {node
    {$\begin{array}{l}
    (2,1)\\(2,2)\\(2,3)\\(2,4)\\(1,5)\end{array}$}}}}}
;
\end{tikzpicture}
\caption{The first levels of the generating trees for rules~\eqref{rule:Catalan},~\eqref{rule:newSchroeder} and~\eqref{rule:BaxterMBM}. 
Bold characters are used to indicate the first vertices of $\TBax$ that do not appear in $\TSch$. \label{fig:GeneratingTrees}}
\end{figure}
\end{landscape}

\subsection{A new Schr\"oder succession rule}

Let us consider the following succession rule, whose associated generating tree is denoted $\TSch$ (shown in Figure~\ref{fig:GeneratingTrees} page~\pageref{fig:GeneratingTrees}): 
\begin{equation}
\textrm{root labeled } (1,1) \textrm{ \quad and \quad } (h,k) \rightsquigarrow \begin{cases}
(1, k + 1), (2, k + 1), \ldots , (h, k + 1), \\
(2,1), (2,2), \ldots, (2,k-1),\\
 (h + 1, k).
\end{cases} \tag{\textrm{NewSch}} \label{rule:newSchroeder}
\end{equation}

\begin{theorem}
The enumeration sequence associated with rule~\eqref{rule:newSchroeder} is that of Schr\"oder numbers. 
\end{theorem}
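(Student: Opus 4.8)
The plan is to attack the statement analytically, by setting up the generating function of the tree $\TSch$ that records the label, and then extracting from it the generating function of the level sizes. A purely combinatorial isomorphism with a classical Schr\"oder generating tree seems unlikely to be clean here, precisely because (as noted just above the statement) no previously known Schr\"oder succession rule interpolates in this way. So I would write $F(x;y,z)=\sum_{v}x^{\ell(v)}y^{h(v)}z^{k(v)}$, the sum being over all nodes $v$ of $\TSch$, with $\ell(v)$ the level of $v$ and $(h(v),k(v))$ its label; the target is $F(x;1,1)=\sum_{n\ge 1}c_nx^n$, whose coefficients $c_n$ are the level sizes. Translating \eqref{rule:newSchroeder} termwise into the contribution of each node to its children, and summing the geometric series coming from the ranges $1\le i\le h$ and $1\le j\le k-1$, yields a linear functional equation for $F$ with the two catalytic variables $y$ and $z$, whose kernel is $K(y,z)=1-\frac{xy(y+z-1)}{y-1}$. (Note that \eqref{rule:newSchroeder} shares its first line with \eqref{rule:BaxterMBM}, so the entire difference between the two computations comes from the second-line labels $(2,1),\dots,(2,k-1)$.)

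To solve this two-variable equation I would first reduce it to a one-variable problem by decomposing according to the first coordinate of the label. Setting $F(x;y,z)=\sum_{h\ge 1}y^hF_h(z)$ and reading off \eqref{rule:newSchroeder}, one gets $F_1(z)=xz\bigl(1+F(x;1,z)\bigr)$, a separate explicit equation for $F_2$, and for every $h\ge 3$ the uniform relation $F_h=xz\bigl(F(x;1,z)-\sum_{j<h}F_j\bigr)+xF_{h-1}$. Introducing the tail sums $R_h=\sum_{j>h}F_j$ turns this into the second-order linear recurrence in $h$ \[R_h=(1+x-xz)R_{h-1}-xR_{h-2},\] whose characteristic roots $t_0,t_1$ are the roots of $t^2-(1+x-xz)t+x=0$ and satisfy $t_0t_1=x$. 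Since a node with first coordinate $j$ cannot occur before level $j$, the series $R_h$ has $x$-valuation at least $h+1$; hence of the two solutions only the one built on the small root survives, giving $R_h=C_0\,t_0^{\,h}$. This selection of the decaying solution is exactly the kernel-method step for the variable $y$, and I expect its rigorous justification (via valuations of formal power series) to be one of the two delicate points.

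Feeding $R_h=C_0t_0^{\,h}$ back into the equations for $F_1$ and $F_2$, and using the defining quadratic of $t_0$ in the convenient form $1-t_0+x-xz=x/t_0$, one eliminates $C_0$ and $F_1$, leaving a single linear equation that expresses $F(x;1,z)$ in terms of the sought constant $c:=F(x;1,1)$, namely \[F(x;1,z)=\frac{z\bigl((z-1)x-t_0\,c\bigr)}{(z-1)(1-xz)-t_0\,z}.\] One further application of the kernel method finishes the job: the denominator has a unique power-series root $Z(x)=1+x+\cdots$, and requiring the numerator to vanish there (so that $F(x;1,z)$ stays a power series) gives $c=\frac{(Z-1)x}{t_0}$, with $t_0$ evaluated at $z=Z$. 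Eliminating $Z$ and $t_0$ between this identity, the relation $t_0^2-(1+x-xz)t_0+x=0$, and the denominator equation, the higher-degree terms cancel and the system collapses to \[c^2-(1-x)c+x=0.\] This is precisely the algebraic equation of the (shifted) large Schr\"oder generating function $c=\frac{1-x-\sqrt{1-6x+x^2}}{2}=\sum_{n\ge 1}s_{n-1}x^n$, so the level sizes of $\TSch$ are the Schr\"oder numbers, as claimed.

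The two places that require genuine care are the justification that $R_h$ is carried by the small root $t_0$ only (the valuation argument above), and the final elimination, where one must verify that the three algebraic relations really reduce to this clean quadratic rather than to a spurious higher-degree factor. As a guard against sign errors in the functional equation, I would also confirm directly from \eqref{rule:newSchroeder} that the first level sizes are $1,2,6,22$, matching \textsc{a006318}.
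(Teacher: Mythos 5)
Your proposal is correct --- I have checked the functional equation, the kernel $K(y,z)=1-\frac{xy(y+z-1)}{y-1}$, the tail-sum recurrence and its characteristic roots, the valuation argument forcing $R_h=C_0t_0^h$, and the final elimination, which does collapse to $c^2-(1-x)c+x=0$ --- but it takes a genuinely different and far heavier route than the paper. The paper's proof is one line: replacing each label $(h,k)$ of rule~\eqref{rule:newSchroeder} by the single label $h+k$ turns it verbatim into West's Schr\"oder rule~\eqref{rule:SchroederWest}, i.e.\ root $(2)$ and $(j)\rightsquigarrow(3),(4),\ldots,(j),(j+1),(j+1)$: the first block of productions $(i,k+1)$, $1\le i\le h$, has sums $k+2,\ldots,h+k+1$, the block $(2,i)$, $1\le i\le k-1$, has sums $3,\ldots,k+1$, and $(h+1,k)$ contributes the second copy of $h+k+1$. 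So the ``purely combinatorial isomorphism with a classical Schr\"oder generating tree'' that you dismissed as unlikely to be clean is in fact completely clean --- the interpolation novelty of \eqref{rule:newSchroeder} lies in how it refines \eqref{rule:Catalan} and embeds into \eqref{rule:BaxterMBM}, not in any obstruction to recognising its level sizes. What your computation buys in exchange for its length is independence from West's result and an explicit bivariate refinement $F(x;y,z)$ of the Schr\"oder numbers by the label statistics, which the label-merging argument does not provide; the two delicate points you flag (selecting the decaying root via valuations, and the non-degeneracy of the final elimination) are both handled correctly.
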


\begin{proof}
From~\cite{West}, 
we know that the following succession rule is associated with Schr\"oder numbers: 
\begin{equation}
\textrm{root labeled } (2) \textrm{ \quad and \quad } (j) \rightsquigarrow (3), (4), \ldots , (j), (j + 1), (j + 1). \tag{\textrm{Sch}} \label{rule:SchroederWest}
\end{equation}
We claim that rules~\eqref{rule:newSchroeder} and~\eqref{rule:SchroederWest} produce the same generating tree. 
Indeed, replacing each label $(h,k)$ in rule~\eqref{rule:newSchroeder} by the sum $h+k$ of its elements immediately gives rule~\eqref{rule:SchroederWest}. 
\end{proof}

What is further interesting with rule~\eqref{rule:newSchroeder} is that rule~\eqref{rule:BaxterMBM} for Baxter numbers generalizes it and rule~\eqref{rule:Catalan} for Catalan numbers specializes it. Indeed, it is not obvious that rule~\eqref{rule:SchroederWest} generalizes rule~\eqref{rule:Catalan}, ensuring that $\TSch$ contains a ``canonical'' subtree isomorphic to $\TCat$ -- see generating trees depicted in Figure~\ref{fig:GeneratingTrees}. Yet this fact becomes clear with rule~\eqref{rule:newSchroeder}, which can be immediately seen to generalize rule~\eqref{rule:Catalan}, in the same fashion rule~\eqref{rule:BaxterMBM} does.

\begin{theorem}
\label{thm:SchInBax}
The succession rule~\eqref{rule:newSchroeder} generalizes rule~\eqref{rule:Catalan}, while specializing rule~\eqref{rule:BaxterMBM}. Hence, $\TCat$ is (isomorphic to) a subtree of $\TSch$, which in turn is (isomorphic to) a subtree of $\TBax$.
\end{theorem}


\begin{proof}
Note first that the only difference between rules~\eqref{rule:BaxterMBM} and~\eqref{rule:newSchroeder} 
is that labels $(h+1,i)$ for $1 \leq i \leq k-1$ in the production of rule~\eqref{rule:BaxterMBM} 
are replaced by $(2,i)$ in rule~\eqref{rule:newSchroeder}. 
Thus, first we prove that rule~\eqref{rule:newSchroeder} generalizes rule~\eqref{rule:Catalan} in the same way as rule~\eqref{rule:BaxterMBM} does. 
From this fact, it follows that $\TCat$ is isomorphic to a subtree of $\TSch$ 
(the one called canonical subtree in Section~\ref{sec:gen-spec}).

More precisely, we define that a label $(j)$ for rule~\eqref{rule:Catalan} is smaller than or equal to a label $(h,k)$ in rule~\eqref{rule:newSchroeder}
when $j=k$. Then, we consider the subset $(2,1),(2,2),\ldots, (2,k-1),(h+1,k)$ and $(1,k+1)$ of the productions of a label $(h,k)$ by rule~\eqref{rule:newSchroeder}, 
whose second components give the productions of $(k)$ for rule~\eqref{rule:Catalan}. 
Consequently, the mapping below witnesses the fact that~\eqref{rule:Catalan} specializes~\eqref{rule:newSchroeder}:
\[
\begin{array}{lcccccccc}
\textrm{root labeled }(1)&\textrm{and }&(k)&\underset{\eqref{rule:Catalan}}{\rightsquigarrow} &(1),& \ldots&(k-1), &(k),&(k+1).\\
\qquad\quad\big\downarrow &&\big\downarrow & & \big\downarrow &\ldots&\big\downarrow&\big\downarrow & \big\downarrow\\[.5em]
\textrm{root labeled }(1,1)&\textrm{and }&(h,k)& \underset{\eqref{rule:newSchroeder}}{\rightsquigarrow}& (2,1),& \ldots& (2,k-1),& (h+1,k),& (1,k+1).
\end{array}
\]

\medskip

Now, we prove that rule~\eqref{rule:newSchroeder} specializes rule~\eqref{rule:BaxterMBM}, 
from which, as before, it follows that $\TSch$ is isomorphic to a subtree of $\TBax$. 
Observe that labels in rule~\eqref{rule:newSchroeder} are arrays of two integer values as well as those in rule~\eqref{rule:BaxterMBM}. 
We define that a label $(h,k)$ for rule~\eqref{rule:newSchroeder} is smaller than or equal to a label $(h',k')$ for rule~\eqref{rule:BaxterMBM} 
when $h \leq h'$ and $k=k'$. 
So, to conclude that~\eqref{rule:BaxterMBM} generalizes~\eqref{rule:newSchroeder}, 
we just need to exhibit a mapping which respects this order of the productions of a label $(h,k)$ in~\eqref{rule:newSchroeder}
to a subset of the productions of the label $(h',k)$ in~\eqref{rule:BaxterMBM}, 
for any $h' \geq h$. 
This mapping is given by 
\[
\textrm{root labeled }(1,1)\quad \longrightarrow \quad \textrm{root labeled }(1,1), \textrm{ and }
\]
\begin{multline*}
(h,k) \underset{\textrm{NewSch}}{\rightsquigarrow} (1,k+1), \ldots ,(h,k+1),  \hspace{3.5cm} (2,1),  \hspace{0.5cm} \ldots \hspace{0.5cm}(2,k-1),\; (h+1,k). \\
\hspace{2cm} \big\downarrow \hspace{0.6cm} \ldots \hspace{0.5cm} \big\downarrow \hspace{4.8cm} \big\downarrow \hspace{0.8cm} \ldots \hspace{1cm} \big\downarrow \hspace{1.3cm} \big\downarrow\\
\hspace{0.2cm}(h',k)  \hspace{0.15cm} \underset{\textrm{Bax}}{\rightsquigarrow}  \hspace{0.15cm} (1,k+1), \ldots (h,k+1), (h+1,k+1), \ldots (h',k+1),\; (h'+1,1), \ldots (h'+1,k-1),\; (h'+1,k).
\end{multline*}

\end{proof}

To our knowledge, this is the first time three succession rules for Catalan, Schr\"oder and Baxter numbers are given, 
which are each a generalization of the previous one. 
The first levels of the generating trees for rules~\eqref{rule:Catalan},~\eqref{rule:newSchroeder} and~\eqref{rule:BaxterMBM} 
are shown in Figure~\ref{fig:GeneratingTrees} on page~\pageref{fig:GeneratingTrees}. 

\subsection{Definition of Schr\"oder slicings, and their growth}

We want to define Schr\"oder slicings so that they form a subset of the Baxter slicings that is enumerated by the Schr\"oder numbers, 
and whose growth is described by rule~\eqref{rule:newSchroeder}. 
To do that, recall that a ``canonical'' subtree of $\TBax$ isomorphic to $\TSch$ was built in the proof of Theorem~\ref{thm:SchInBax}. 
From there, it is enough to label the vertices of $\TBax$ by the corresponding Baxter slicings, 
and to keep only the objects which label a vertex of this ``canonical'' subtree. 
With this global approach to the definition of Schr\"oder slicings, the problem is  
to provide a characterization of these objects that would be local, \emph{i.e.}~that could be checked on any given Baxter slicing 
without reconstructing the whole chain of productions according to rule~\eqref{rule:BaxterMBM} that resulted in this object. 

For the sake of clarity, we have chosen to reverse the order in the presentation of Schr\"oder slicings:
we will first give their ``local characterization'', 
and then prove that they grow according to rule~\eqref{rule:newSchroeder}. 
It will be clear in the proof of this statement (see Theorem~\ref{thm:SchroederSlicings_growth}) 
that Schr\"oder slicings correspond to the ``canonical'' subtree of $\TBax$ on Baxter slicings described earlier. 

\begin{definition}
Let $B$ be a Baxter slicing of a parallelogram polyomino $P$, and let $u$ be a horizontal block of $B$. 
We denote by $\ell(u)$ the width of $u$. 
The projection $X(u)$ of $u$ on the lower border of $P$ is the lower-most point of this border 
whose abscissa is that of the right edge of $u$. 
We now define $r(u)$ to be the number of horizontal steps on the lower border of $P$ to the left of $X(u)$ 
before a vertical step (or the bottom-left corner of $P$) is met.
\label{dfn:ell_and_r}
\end{definition}

\begin{figure}[ht]
\begin{center}
\includegraphics[scale=0.8]{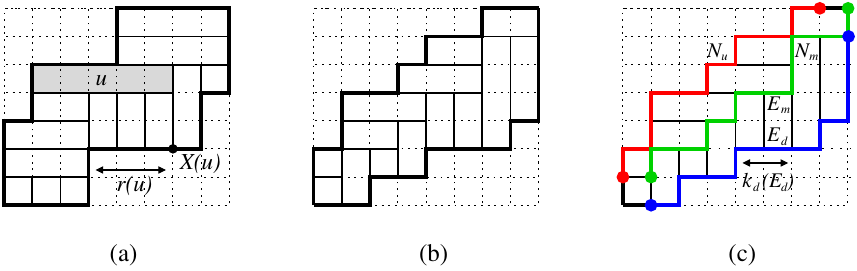}
\end{center}
\caption{(a) Illustration of Definition~\ref{dfn:ell_and_r}, (b) example of Schr\"oder slicing, 
and (c) illustration of Definition~\ref{dfn:pair_steps_matched} and Theorem~\ref{thm:bijection_to_Schroeder_NILPs}. \label{fig:SchroederSlicings}}
\end{figure}

\begin{definition}
A Schr\"oder slicing is any Baxter slicing such that for any horizontal block $u$, the following inequality holds: 
\begin{equation}
\ell(u) \leq r(u) +1. \tag{$\mathnormal{\ell r_1}$} \label{eq:lr1}
\end{equation}
\label{dfn:SchroederSlicing}
\end{definition}

Figure~\ref{fig:SchroederSlicings}(a,b) illustrates the definitions of $\ell(u)$ and $r(u)$, 
and shows an example of a Schr\"oder slicing. 

\begin{theorem}
A generating tree for Schr\"oder slicings is $\TSch$, associated with rule~\eqref{rule:newSchroeder}. 
\label{thm:SchroederSlicings_growth}
\end{theorem}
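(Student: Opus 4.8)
The plan is to show that the Schr\"oder slicings, defined locally by the inequality~\eqref{eq:lr1}, are exactly the Baxter slicings labeled by the ``canonical'' subtree of $\TBax$ built in the proof of Theorem~\ref{thm:SchInBax}, and that the growth on this subset realizes rule~\eqref{rule:newSchroeder}. Concretely, I would label a Baxter slicing by $(h,k)$, where $h$ is the width of the topmost row and $k$ the height of the rightmost column, exactly as in Theorem~\ref{thm:growth_Baxter_slicings}. The task is then to verify two things: first, that adding an atom to a Schr\"oder slicing in a way permitted by rule~\eqref{rule:newSchroeder} produces a Schr\"oder slicing; and second, that every Schr\"oder slicing of size $n+1$ arises in exactly one such way from a Schr\"oder slicing of size $n$. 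Since Schr\"oder slicings are a subset of Baxter slicings, and the latter grow by rule~\eqref{rule:BaxterMBM}, it suffices to check which of the Baxter productions of a label $(h,k)$ stay inside the Schr\"oder class and that these are precisely the productions listed in~\eqref{rule:newSchroeder}.

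First I would analyze the two kinds of atom insertions separately. When we add a new vertical block, i.e.\ a new rightmost column of height $j$ with $1 \le j \le k$, the set of horizontal blocks of the slicing is unchanged; moreover, for each horizontal block $u$ the quantities $\ell(u)$ and $r(u)$ depend only on $u$'s width and on the lower border of $P$ to the left of the projection $X(u)$, both of which are unaffected by appending a column on the right. Hence inequality~\eqref{eq:lr1} is preserved automatically, and all $k$ vertical insertions are legal, matching the labels $(h+1,1),\ldots,(h+1,k)$ of rule~\eqref{rule:BaxterMBM} --- but note that~\eqref{rule:newSchroeder} instead produces $(2,1),\ldots,(2,k-1)$ together with $(h+1,k)$. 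This is the crux: a vertical insertion changes the \emph{width of the current topmost row} as seen by the next step only through the relabeling convention, so I must check that adding a column of height $j<k$ should carry label $(2,j)$ rather than $(h+1,j)$, reflecting that the newly relevant topmost-row width resets. The cleanest way to see this is to track the horizontal block $u_{\mathrm{top}}$ forming the topmost row and show that after a ``short'' column insertion the binding constraint on future horizontal additions is governed by a width of $1$, forcing the first label component to become $2$ on the next horizontal step.

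Second I would treat the horizontal insertions. Adding a new topmost row of width $w$ creates a new horizontal block $u'$ with $\ell(u') = w$; its projection $X(u')$ lands on the lower border at the abscissa of the right edge of the new row, and $r(u')$ counts the horizontal steps on the lower border immediately to the left of $X(u')$. I would compute $r(u')$ in terms of the previous label and verify that~\eqref{eq:lr1} holds precisely when $1 \le w \le h$, so that the admissible widths give the labels $(1,k+1),\ldots,(h,k+1)$, exactly the first line of both rules. The case $w=h+1$, legal for Baxter, must be shown to violate~\eqref{eq:lr1} and is therefore excluded from the Schr\"oder growth --- this is what truncates the Baxter production and turns $(h+1,k+1)$ into the reset productions. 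I also need to confirm that the newly created horizontal block does not retroactively break~\eqref{eq:lr1} for any older horizontal block; this follows because adding a topmost row strictly above does not alter the lower border nor any $\ell(u)$, $r(u)$ for blocks $u$ already present.

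The main obstacle I anticipate is the bookkeeping of how the label $(h,k)$ of a child relates to its parent under the $r(u)$-statistic, especially pinning down \emph{why} a short vertical insertion yields first component $2$ rather than $h+1$; the preservation arguments are routine, but the exact correspondence between the geometric quantities $\ell,r$ and the abstract labels of~\eqref{rule:newSchroeder} requires care. To close the proof I would combine these two analyses: the legal productions of a Schr\"oder slicing labeled $(h,k)$ are exactly $(1,k+1),\ldots,(h,k+1)$ from horizontal insertions and $(2,1),\ldots,(2,k-1),(h+1,k)$ from vertical insertions, which is rule~\eqref{rule:newSchroeder}, and each child is produced uniquely (the last atom added is recoverable as the topmost row or rightmost column, inheriting the unambiguity already established for Baxter slicings in Theorem~\ref{thm:growth_Baxter_slicings}). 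Together with the base case --- the single slicing of size $1$ carrying the root label $(1,1)$ --- this shows $\TSch$ is a generating tree for Schr\"oder slicings, and it transparently identifies them with the ``canonical'' subtree of $\TBax$.
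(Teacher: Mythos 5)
Your overall strategy---a direct case analysis of horizontal versus vertical insertions, checking that condition~\eqref{eq:lr1} is preserved, and matching the resulting labels against rule~\eqref{rule:newSchroeder}---is the same as the paper's, and your preservation arguments (a new rightmost column does not affect $\ell(u)$ or $r(u)$ for existing horizontal blocks; a new topmost row does not affect older blocks) are correct. The genuine gap is in the choice of label, and it sits exactly at the point you flag as ``the crux''. You define $h$ to be the width of the topmost row, as in the Baxter labeling of Theorem~\ref{thm:growth_Baxter_slicings}. For Schr\"oder slicings this is the wrong statistic: the paper takes $h$ to be the \emph{maximal width of a horizontal block that may be added without violating~\eqref{eq:lr1}}, which equals $\min(w,r+1)$, where $w$ is the width of the topmost row and $r$ is the common value of $r(\cdot)$ for any prospective new topmost row. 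These two statistics genuinely differ. For instance, grow $(1,1)\to(1,2)\to(1,3)\to(2,2)\to(2,1)$ by adding two unit-width rows, then a column of height $2$, then a column of height $1$: the resulting Schr\"oder slicing has topmost row of width $3$, but a new horizontal block of width $3$ would have $\ell=3>r+1=2$, so only widths $1$ and $2$ are addable. With your labels this object is $(3,1)$ and your claimed first line of productions $(1,k+1),\dots,(h,k+1)$ is false (the width-$3$ child leaves the class); with the paper's labels it is $(2,1)$ and the productions match~\eqref{rule:newSchroeder} exactly.

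Your appeal to a ``relabeling convention'' and a reset happening ``on the next horizontal step'' does not repair this: in a generating tree the label must be a statistic of the object itself which determines the labels of all its children, so the reset of the first component to $2$ after a short-column insertion has to be built into the definition of the label, not deferred. Once $h$ is redefined as the maximal addable width, the three verifications are exactly the computations you sketch: a horizontal block of width $i\le h$ yields $(i,k+1)$ since $X$ and $r$ are unchanged for subsequent rows; a full-height column yields $(h+1,k)$ since both $\ell$ and $r$ of every prospective row increase by one; and a column of height $j<k$ yields $(2,j)$ since then $r=1$ for every prospective new row. A secondary confusion: rules~\eqref{rule:BaxterMBM} and~\eqref{rule:newSchroeder} produce the same number $h+k$ of children from a label $(h,k)$, so nothing is ``truncated'' at the level of a single production; what gets excluded at the level of objects are the horizontal additions of width strictly between the maximal addable width and the topmost-row width, not a hypothetical width-$(h+1)$ row.
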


\begin{proof}
Like Baxter slicings, Schr\"oder slicings grow by adding vertical blocks on the right and horizontal blocks on top, 
but whose width is restricted, so that condition~\eqref{eq:lr1} is always satisfied. 

To any Schr\"oder slicing $P$, let us associate the label $(h,k)$ 
where $h$ (resp.~$k$) denotes the maximal width (resp.~height) of a horizontal (resp.~vertical) block that may be added to $P$, 
without violating condition~\eqref{eq:lr1}. 
Note that if a horizontal block of width $i$ may be added, 
then for all $i' \leq i$, the addition of a horizontal block of width $i'$ is also allowed. 
Consequently, we may add horizontal blocks of width $1$ to $h$ to $P$.  
Notice also that $k$ denotes the height of the rightmost column of $P$ 
(since condition~\eqref{eq:lr1} introduces no restriction on vertical blocks), 
and that columns of any height from $1$ to $k$ may be added to $P$. 

Figure~\ref{fig:production_Schroeder_slicings} illustrates 
the three cases discussed below in the growth of Schr\"oder slicings according to rule~\eqref{rule:newSchroeder}. 

\begin{figure}[ht]
\begin{center}
\includegraphics[scale=0.8]{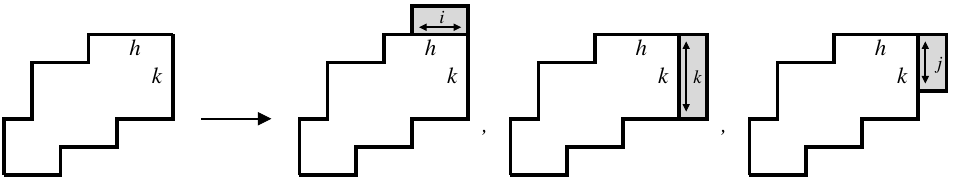}
\end{center}
\caption{The productions of a Schr\"oder slicing of label $(h,k)$ following rule~\eqref{rule:newSchroeder}. \label{fig:production_Schroeder_slicings}}
\end{figure}

For any $i \leq h$, consider the Schr\"oder slicing $P'$ obtained by adding a horizontal block $u$ of width $ \ell(u) = i$. 
We claim that the label of $P'$ is $(i,k+1)$. 
Obviously, the height of the last column of $P'$ is $k+1$. 
Moreover, if we were to add a further horizontal block $u'$ of any width $\ell(u') = i' \leq i$, 
$u'$ would satisfy condition~\eqref{eq:lr1}, since $X(u) = X(u')$ and $r(u) = r(u')$. 

Next, consider the Schr\"oder slicing $P'$ obtained by adding a column of height $k$ to $P$. 
We claim that it has label $(h+1,k)$. 
Of course, the rightmost column of $P'$ has height $k$. 
Moreover, the horizontal blocks $u'$ that may be added to $P'$ are of two types:  
either the block $u'$ is made of one single cell on top of the rightmost column of $P'$, 
or $u'$ is exactly the same as a horizontal block that could be added to $P$, 
except that it is augmented by one cell on the right. 
In this latter case, condition~\eqref{eq:lr1} is indeed satisfied since both $\ell(u')$ and $r(u')$ increase by $1$, when going from $P$ to $P'$. 

Finally, for any $j < k$, the Schr\"oder slicing $P'$ obtained by adding a column of height $j$ to $P$ has label $(2,j)$. 
Indeed, the rightmost column of $P'$ has height $j$, 
and only horizontal blocks $u'$ of width $1$ or $2$ may be added to $P'$ without violating condition~\eqref{eq:lr1}, 
since $r(u')=1$. 
\end{proof}

\section{Other Schr\"oder restrictions of Baxter objects}
\label{sec:other_Schroeder}

For any Baxter class $\mathcal{C}$ whose growth according to rule~\eqref{rule:BaxterMBM} is understood, 
it is immediate to define a Schr\"oder subclass of $\mathcal{C}$. 
Indeed, we can consider the full generating tree of shape $\TBax$ associated with $\mathcal{C}$, 
its ``canonical'' subtree isomorphic to $\TSch$, 
and keep only the objects of $\mathcal{C}$ associated with a vertex of $\TSch$. 
This method has the advantage of being systematic, 
but it does not \emph{a priori} provide a characterization of the objects in the Schr\"oder subclass 
which does not refer to the generating trees. 

In this section, we give three examples of Schr\"oder subclasses of Baxter classes. They have not been 
obtained with the above general method, but we provide for each of them a characterization of the
Schr\"oder objects without reference to generating trees.

\subsection{A Schr\"oder family of NILPs}

From Theorem~\ref{thm:bijection_Baxter_slicings_NILPs}, we have a simple bijection between triples of NILPs and Baxter slicings. 
And in Section~\ref{sec:Schroeder}, we have seen a subset of Baxter slicings enumerated by the Schr\"oder numbers. 
A natural question, which we now solve, is then to give a characterization of the triples of NILPs 
which correspond to Schr\"oder slicings via the bijection of Theorem~\ref{thm:bijection_Baxter_slicings_NILPs}. 

\begin{definition}
Let $(u,m,d)$ be a triple of NILPs as in Definition~\ref{dfn:NILP}. 

A pair $(N_u,N_m)$ of $N$ steps of $u$ and $m$ is \emph{matched} 
if there exists $i$ such that $N_u$ (resp.~$N_m$) is the $i$-th $N$ step of $u$ (resp.~$m$).
Similarly, a pair $(E_m,E_d)$ of $E$ steps of $m$ and $d$ is \emph{matched} 
if there exists $i$ such that $E_m$ (resp.~$E_d$) is the $i$-th $E$ step of $m$ (resp.~$d$).

Moreover, for any $N$ step $N_u$ in $u$, 
we denote by $h_u(N_u)$ the number of $E$ steps of $u$ that occur before $N_u$. 
Similarly, for any $N$ step $N_m$ in $m$, 
we denote by $h_m(N_m)$ the number of $E$ steps of $m$ that occur before $N_m$. 
And for any $E$ step $E_d$ in $d$, we denote by $k_d(E_d)$ the largest $k$ such that $E^k$ is a factor of $d$ ending in $E_d$. 
\label{dfn:pair_steps_matched}
\end{definition}

Figure~\ref{fig:SchroederSlicings}(c) (page~\pageref{fig:SchroederSlicings}) should help understand these definitions. 

\begin{definition}
A Schr\"oder triple of NILPs is any triple $(u,m,d)$ as in Definition~\ref{dfn:NILP} such that for any $N$ step $N_u$ of the path $u$, 
denoting $N_m$ the $N$ step of $m$ such that $(N_u,N_m)$ is matched, 
$E_m$ the last $E$ step of $m$ before $N_m$,
and $E_d$ the $E$ step of $d$ such that $(E_m,E_d)$ is matched, 
the following inequality holds:
\begin{equation}
h_m(N_m)-h_u(N_u) \leq k_d(E_d).\tag{$\star$}\label{eq:cond_NILPs}
\end{equation}
\label{dfn:SchroederNILPs}
\end{definition}

\begin{theorem}
Schr\"oder slicings are in one-to-one correspondence with Schr\"oder triples of NILPs 
by means of the size-preserving bijection described in Theorem~\ref{thm:bijection_Baxter_slicings_NILPs}.
\label{thm:bijection_to_Schroeder_NILPs}
\end{theorem}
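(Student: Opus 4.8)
The plan is to leverage the bijection of Theorem~\ref{thm:bijection_Baxter_slicings_NILPs} directly, rather than build a new correspondence. Since that map, call it $\Phi$, is already a size-preserving bijection between Baxter slicings and triples of NILPs, and since Schr\"oder slicings (Definition~\ref{dfn:SchroederSlicing}) and Schr\"oder triples of NILPs (Definition~\ref{dfn:SchroederNILPs}) are carved out of these two sets by the local conditions \eqref{eq:lr1} and \eqref{eq:cond_NILPs}, it suffices to prove that $\Phi$ transports \eqref{eq:lr1} onto \eqref{eq:cond_NILPs}. Concretely, for a Baxter slicing $B$ with image $(u,m,d) = \Phi(B)$, I would show that the families of inequalities ``\eqref{eq:lr1} holds for every horizontal block of $B$'' and ``\eqref{eq:cond_NILPs} holds for every $N$ step $N_u$ of the path $u$'' are not merely simultaneously true or false, but agree term by term: each horizontal block of $B$ is responsible for exactly one inequality on each side, and these two inequalities are literally the same.

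The first step is to set up this dictionary between horizontal blocks of $B$ and $N$ steps of the path $u$. From the proof of Theorem~\ref{thm:bijection_Baxter_slicings_NILPs}, each $N$ step of $u$ is the left edge of a horizontal block and each $N$ step of $m$ is its right edge; ordering both by height shows that the matched pair $(N_u,N_m)$ of Definition~\ref{dfn:pair_steps_matched} picks out precisely the left and right edges of a single horizontal block $u$ of $B$. I would then translate the width $\ell(u)$ of Definition~\ref{dfn:ell_and_r}: reading abscissae off the two paths, the left edge sits at abscissa equal to the number of $E$ steps of $u$ preceding $N_u$, and the right edge at an offset plus the number of $E$ steps of $m$ preceding $N_m$, so that $\ell(u)-1$ is exactly the quantity appearing on the left-hand side of \eqref{eq:cond_NILPs}, once the constant offset coming from the shift of the starting points and from deleting the first and last contour steps is pinned down.

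The second step is to identify $r(u)$ with $k_d(E_d)$. Following the matching chain of Definition~\ref{dfn:SchroederNILPs}, $E_m$ is the last $E$ step of $m$ before $N_m$, hence the step of $m$ lying immediately below the right edge of the block, and its matched step $E_d$ is the $E$ step of $d$ with the same index. The key geometric observation is that the $i$-th $E$ step of $m$ and the $i$-th $E$ step of $d$ share the same abscissa, since both paths begin one unit to the right of the left contour and count $E$ steps identically; consequently $E_d$ is the lower-border step sitting directly below the right edge of $u$, that is, ending at the abscissa of the projection $X(u)$. Then the maximal run $E^k$ of $d$ ending at $E_d$, measured by $k_d(E_d)$, is exactly the maximal block of horizontal contour steps to the left of $X(u)$ measured by $r(u)$, so $r(u)=k_d(E_d)$. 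Combining the two steps turns \eqref{eq:lr1}, namely $\ell(u)\le r(u)+1$, into \eqref{eq:cond_NILPs}, and the theorem follows by restricting $\Phi$.

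The main obstacle I anticipate is purely this cross-path bookkeeping: keeping the three coordinate systems (the shifted starting points $(0,2)$, $(1,1)$, $(2,0)$ versus the contour of $P$ based at the origin) consistent, correctly accounting for the deleted first and last steps of $u$ and $d$, and above all verifying that the index-matching chain $N_u \mapsto N_m \mapsto E_m \mapsto E_d$ lands on the lower-border step under $X(u)$. A secondary delicate point is the treatment of the boundary case in the definition of $r(u)$, where the horizontal run is cut off by the bottom-left corner rather than by a vertical step, which must be reconciled with where the maximal $E$-factor of $d$ terminates; but once the dictionary of the first two steps is nailed down, these are routine.
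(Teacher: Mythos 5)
Your proposal is correct and follows essentially the same route as the paper: the paper likewise restricts the bijection of Theorem~\ref{thm:bijection_Baxter_slicings_NILPs}, identifies each horizontal block with its matched pair $(N_u,N_m)$ of left and right edges, computes $\ell(w)$ as $h_m(N_m)+1-h_u(N_u)$, and shows $r(w)=k_d(E_d)$ by observing that $X(w)$ is the endpoint of $E_d$, so that \eqref{eq:lr1} and \eqref{eq:cond_NILPs} correspond term by term. The bookkeeping you flag (shifts of starting points, the matching chain landing below $X(w)$) is exactly what the paper's proof carries out.
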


\begin{proof}
We prove that the image of the class of Schr\"oder slicings under the bijection given in Theorem~\ref{thm:bijection_Baxter_slicings_NILPs} 
coincides with the class of Schr\"oder triples of NILPs of Definition~\ref{dfn:SchroederNILPs}. 
This will follow since condition~\eqref{eq:cond_NILPs} on triples of NILPs is equivalent to condition~\eqref{eq:lr1} on Baxter slicings.

Let $(u,m,d)$ be the image of a Baxter slicing $P$. 
By construction (see also Figure~\ref{fig:SchroederSlicings}(c)), every horizontal block $w$ of $P$ is associated with a pair $(N_u,N_m)$ of matched $N$ steps of $u$ and $m$, 
which correspond to the left (for $N_u$) and right (for $N_m$) edges of $w$. 
Similarly, every vertical block of $P$ is associated with a pair $(E_m,E_d)$ of matched $E$ steps of $m$ and $d$, 
corresponding to the upper and lower edges of the block. 

Consider a horizontal block $w$ in $P$, and let $(N_u,N_m)$ be the associated pair of matched steps. 
Denote by $E_m$ the last $E$ step of $m$ before $N_m$,
and by $E_d$ the $E$ step of $d$ such that $(E_m,E_d)$ is matched. 
This is the situation represented in Figure~\ref{fig:SchroederSlicings}(c).
We claim that $w$ satisfies condition~\eqref{eq:lr1} if and only if $N_u, N_m$ and $E_d$ satisfy condition~\eqref{eq:cond_NILPs}. 
On one hand, note that the width $\ell(w)$ of $w$ is also expressed as $h_m(N_m)+1-h_u(N_u)$. 
On the other hand, it is not hard to see that $r(w) = k_d(E_d)$. 
Indeed, the projection $X(w)$ of $w$ on the lower border of $P$ is the ending point of the step $E_d$ in $d$, 
so that both $r(w)$ and $k_d(E_d)$ denote 
the maximal number of $E$ (or horizontal) steps seen when reading $d$ (that is to say, the lower border of $P$) from right to left starting from $X(w)$. 
It follows that $\ell(w) \leq r(w) +1$ if and only if $h_m(N_m)-h_u(N_u) \leq k_d(E_d)$, which concludes the proof.
\end{proof}

\subsection{Another Schr\"oder subset of Baxter permutations}

Recall that a permutation $\sigma = \sigma_1 \sigma_2 \ldots \sigma_n$ contains a permutation $\tau = \tau_1 \tau_2 \ldots \tau_k$ (called \emph{pattern})
if there exists $i_1 < i_2 < \ldots < i_k$ such that $\sigma_{i_a} < \sigma_{i_b}$ if and only if $\tau_a < \tau_b$.
Moreover, recall that a permutation $\pi = \pi_1 \pi_2 \dots \pi_n$ contains the vincular pattern $\commonpat$ if there exists a subsequence $\pi_i \pi_j \pi_{j+1} \pi_k$ of $\pi$ (with $i<j<k-1$), 
called an \emph{occurrence} of the pattern, 
that satisfies $\pi_{j+1} < \pi_i < \pi_k < \pi_j$. 
Containment and occurrence of the pattern $\baxpat$ is defined similarly. 
A permutation not containing a pattern avoids it. 

Baxter permutations~\cite[among many others]{BM} are those that avoid both $\commonpat$ and $\baxpat$. 
The class of separable permutations, defined by the avoidance of $2413$ and $3142$, 
is a well-known subset of the set $\Bax$ of Baxter permutations and is enumerated by the Schr\"oder numbers. 
A generating tree for separable permutations following rule~\eqref{rule:SchroederWest} has been described in~\cite{West}, 
but we have not been able to explain the growth of separable permutations according to rule~\eqref{rule:newSchroeder}. 
However, restricting the growth of Baxter permutations according to rule~\eqref{rule:BaxterMBM}, 
we are able to describe a new subset of Baxter permutations, enumerated by the Schr\"oder numbers, 
and whose growth is governed by rule~\eqref{rule:newSchroeder}. 

As explained at the beginning of this section, 
a Schr\"oder subset of Baxter permutations can be obtained by considering the ``canonical'' embedding of $\TSch$ in $\TBax$. 
Doing so, the two Baxter permutations of size $5$ that are not obtained are $13254$ and $23154$, 
which correspond to the vertices of $\TBax$ shown in bold characters in Figure~\ref{fig:GeneratingTrees}. 
Although this subset of Baxter permutations is easy to define from the generating tree perspective, 
we have not been able to characterize the permutations it contains without referring to the generating trees, 
which is somewhat unsatisfactory. 
On the other hand, the subset of Baxter permutations studied below is not as immediate to define from the generating trees themselves, 
but has a nice characterization in terms of forbidden patterns. 

The definition (in a special case) of bivincular patterns is useful to define the subset of Baxter permutations we are considering: 
a permutation $\sigma$ avoids the pattern $41323^+$ (resp.~$42313^+$) when no subsequence $\sigma_i \sigma_j \sigma_k \sigma_{\ell} \sigma_m$ of $\sigma$ satisfies 
$\sigma_j < \sigma_{\ell} < \sigma _k$ (resp.~$\sigma_{\ell} < \sigma_j < \sigma _k$), $\sigma_m = \sigma_k+1$, and $\sigma_m < \sigma_i$. 

\begin{theorem}
Let $\myS$ be the subset of Baxter permutations defined by avoidance of the (bi)vincular patterns $2\underbracket[.5pt][1pt]{41}3$, $3\underbracket[.5pt][1pt]{14}2$, $41323^+$ and $42313^+$. 
The generating tree obtained by letting permutations in $\myS$ grow by insertion of a maximal element is $\TSch$ 
(associated with rule~\eqref{rule:newSchroeder}), 
and consequently $\myS$ is enumerated by the Schr\"oder numbers. 
\label{thm:newSchroederPerms}
\end{theorem}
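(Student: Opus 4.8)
The plan is to show that $\myS$, grown by inserting a new maximal element, realizes rule~\eqref{rule:newSchroeder}, and then to invoke the result of Section~\ref{sec:Schroeder} that this rule is counted by the Schr\"oder numbers. Since $\myS \subseteq \Bax$ and Baxter permutations grow (as recalled in Section~\ref{sec:Baxter}) by inserting the new maximum immediately to the left of a left-to-right maximum or immediately to the right of a right-to-left maximum, the whole argument takes place inside the Baxter generating tree. First I would check that $\myS$ is stable under deletion of the largest value, so that each $\sigma \in \myS$ of size $>1$ has a unique parent in $\myS$: removing the top value alters neither the relative order of the remaining entries nor the value-adjacency $\sigma_m = \sigma_k+1$ occurring in $41323^+$ and $42313^+$, hence cannot create an occurrence. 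The tree of $\myS$ under this growth is therefore well defined, rooted at the size-$1$ permutation; it then remains to compute, for each $\sigma \in \myS$, which of its Baxter active sites land back in $\myS$, and to read off a succession rule.

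The key observation is that the inserted maximum, being the largest value, can only play the role of the leading entry $\sigma_i$ in an occurrence of $41323^+$ or $42313^+$, so inserting it at a given site creates a forbidden occurrence if and only if the portion of $\sigma$ lying to the \emph{right} of that site contains a value-adjacent $1324$- or $2314$-occurrence (the four entries $\sigma_j < \sigma_\ell < \sigma_k < \sigma_m$, resp.\ $\sigma_\ell < \sigma_j < \sigma_k < \sigma_m$, with $\sigma_m = \sigma_k+1$). From this I would derive two facts. First, \emph{every} right insertion is safe: if a right insertion after a right-to-left maximum $r$ produced such a tail, then $r$ itself (lying to the left of the tail and exceeding every entry after it) would already complete the forbidden pattern in $\sigma$, contradicting $\sigma \in \myS$. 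Second, a left insertion before the left-to-right maximum $m$ is safe if and only if the suffix of $\sigma$ starting at $m$ avoids these value-adjacent tails; since those suffixes only shrink as $m$ moves right, the safe left sites form a terminal segment of the left-to-right maxima. I would then label $\sigma$ by $(h,k)$, where $k$ is its number of right-to-left maxima and $h$ the number of safe left sites, and note that the size-$1$ permutation carries label $(1,1)$.

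The heart of the proof is verifying that these labels evolve exactly according to~\eqref{rule:newSchroeder}. The second coordinate is easy: every left insertion raises the right-to-left maxima count to $k+1$, while inserting after the $j$-th right-to-left maximum (counted from the right) leaves exactly $j$ of them, matching the two lines of~\eqref{rule:newSchroeder}. For the first coordinate I would track how the terminal segment of safe left sites transforms; along the safe left insertions the threshold of the segment is unchanged and the new maximum is itself a safe site, so the $h$ safe left children receive first coordinates $1, 2, \dots, h$. For the right insertions the first coordinate behaves discontinuously, and this is the step I expect to be the main obstacle: inserting after the top element $n$ splices $n+1$ immediately after $n$, creating no new tail (there is no room between the value-adjacent pair $n, n+1$ for a middle entry), so the safe-site count rises to $h+1$; whereas inserting after a lower right-to-left maximum $r$ places $r$ strictly between $n$ and $n+1$, and then for \emph{every} left-to-right maximum $m'\neq n$ the quadruple $(m',n,r,n+1)$ is a value-adjacent $1324$ or $2314$ tail, collapsing the safe-site count to exactly $2$. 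Making this dichotomy precise—exhibiting the collapsing tails and excluding any competing occurrences—is where the value-adjacency conditions in $41323^+$ and $42313^+$ must be used most carefully. Once the production is matched to~\eqref{rule:newSchroeder}, the enumeration of $\myS$ by the Schr\"oder numbers follows.
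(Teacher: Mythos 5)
Your proposal is correct and follows essentially the same route as the paper's proof: grow $\myS$ by inserting the maximum inside the Baxter generating tree, show every right site stays active while the active left sites form a terminal segment of the left-to-right maxima characterized by a tail-avoidance condition on the suffix, label by (number of active left sites, number of right-to-left maxima), and verify the productions of~\eqref{rule:newSchroeder} via exactly the dichotomy you describe (insertion after $n$ gives $(h+1,k)$, insertion after a lower right-to-left maximum $r$ collapses the left count to $2$ because of the quadruple $m', n, r, n+1$). The only cosmetic difference is that the paper packages your four-element ``value-adjacent $1324$/$2314$ tail'' condition as a three-element $212^+$ pattern in $\sigma_{i+1}\ldots\sigma_n$ whose ``$2$'' exceeds $\sigma_i$, which is the same criterion with $\sigma_i$ factored out.
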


Note that the two Baxter permutations of size $5$ that are not in $\myS$ are $51324$ and $52314$. 
Figure~\ref{fig:growthSchroederSlicings} depicts the graphical representation of permutation $24351$, which belongs to $\myS$, and the set of permutations of $\myS$ obtained by adding a new maximum to it. 

\begin{figure}[ht]
\begin{center}
\includegraphics[scale=0.8]{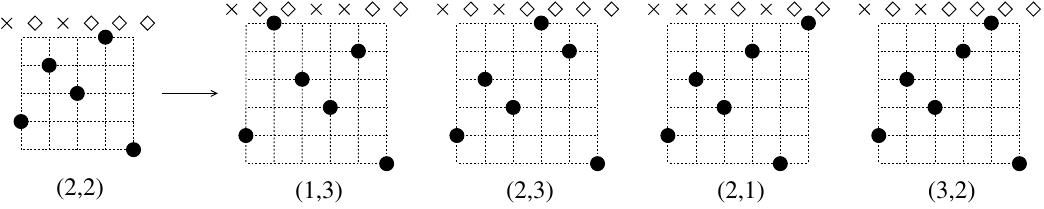}
\end{center}
\caption{The growth of a permutation $\sigma\in\myS$ according to rule~\eqref{rule:newSchroeder}. The $i$th entry $\sigma_i$ is plotted in the grid at coordinate $(i,\sigma_i)$. 
The active (resp. non-active) sites are indicated by the symbol $\Diamond$ (resp. $\times$).
\label{fig:growthSchroederSlicings}}
\end{figure}

\begin{proof}
First, note that if $\sigma \in \myS$, then the permutation obtained by removing the maximal element of $\sigma$ also belongs to $\myS$. 
So we can make permutations of $\myS$ grow by insertion of the maximum.

Second, observe that $\myS$ is a subset of $\Bax$. So the set of active sites (\emph{i.e.}~positions where the new maximum can be inserted while remaining in the class)
is a subset of the set of active sites in the growth of Baxter permutations according to rule~\eqref{rule:BaxterMBM}. 
These active sites are described in~\cite{mbm} and are: 
\begin{itemize}
 \item the sites immediately to the right of right-to-left maxima, and 
 \item the sites immediately to the left of left-to-right maxima.
\end{itemize}
In particular, the two sites surrounding the current maximum are always active. 

\medskip

We claim that the active sites of $\sigma \in \myS$ are the following, where $n$ denotes the size of $\sigma$: 
\begin{itemize}
 \item the sites immediately to the right of right-to-left maxima, and 
 \item for any left-to-right maximum $\sigma_i$, the site immediately to the left of $\sigma_i$, 
 provided that the sequence $\sigma_{i+1} \ldots \sigma_n$ contains no pattern $212^+$ where $2$ is mapped to a value larger than $\sigma_i$.  
\end{itemize}
More formally, the condition above on $\sigma_{i+1} \ldots \sigma_n$ is expressed as follows:  
there is no subsequence $\sigma_a \sigma_b \sigma_c$ of $\sigma_{i+1} \ldots \sigma_n$ such that $\sigma_a > \sigma_i$, $\sigma_b < \sigma_a$ and $\sigma_c = \sigma_a +1$. 

For the first item, it is enough to notice that the insertion of $n+1$ 
in any site located to the right of $n$ cannot create a $41323^+$ or $42313^+$ pattern 
(if it would, then $n$ instead of $n+1$ would give a forbidden pattern in $\sigma$). 

For the second item, consider a left-to-right maximum $\sigma_i$. 
The insertion of $n+1$ immediately to the left of $\sigma_i$ creates a $41323^+$ or $42313^+$ pattern 
if and only if it creates such a pattern where $n+1$ is used as the $4$. 

Assume first that the sequence $\sigma_{i+1} \ldots \sigma_n$ contains a pattern $212^+$ where $2$ is mapped to a value larger than $\sigma_i$. 
Then together with $n+1$ and $\sigma_i$, we get a $41323^+$ or $42313^+$ pattern: such insertions do not produce a permutation in $\myS$. 

On the other hand, assume that the sequence $\sigma_{i+1} \ldots \sigma_n$ contains no pattern $212^+$ where $2$ is mapped to a value larger than $\sigma_i$. 
If the insertion of $n+1$ immediately to the left of $\sigma_i$ creates a $41323^+$ or $42313^+$ pattern, say $(n+1) \sigma_a \sigma_b \sigma_c \sigma_d$, 
then $\sigma_b \sigma_c \sigma_d$ is a $212^+$ pattern in $\sigma_{i+1} \ldots \sigma_n$, and by assumption $\sigma_b < \sigma_i$.
This implies that $\sigma_i$ is larger than all of $\sigma_a$, $\sigma_b$, $\sigma_c$ and $\sigma_d$, 
so that $\sigma_i \sigma_a \sigma_b \sigma_c \sigma_d$ is a $41323^+$ or $42313^+$ pattern in $\sigma$, 
contradicting the fact that $\sigma \in \myS$. 
In conclusion, under the hypothesis that the sequence $\sigma_{i+1} \ldots \sigma_n$ contains no pattern $212^+$ where $2$ is mapped to a value larger than $\sigma_i$, 
then the insertion of $n+1$ immediately to the left of $\sigma_i$ produces a permutation in $\myS$. 

To any permutation $\sigma$ of $\myS$, associate the label $(h,k)$ 
where $h$ (resp.~$k$) denotes the number of active sites to the left (resp.~right) of $\sigma$'s maximum. 
Of course, the permutation $1$ has label $(1,1)$. 
We shall now see that the permutations produced by inserting a new maximum in $\sigma$ have the labels indicated by rule~\eqref{rule:newSchroeder}, 
concluding our proof of Theorem~\ref{thm:newSchroederPerms}. 

Denote by $n$ the size of $\sigma$. 
When inserting $n+1$ in the $i$-th active site (from the left) on the left of $n$, 
this increases by $1$ the number of right-to-left maxima. 
Moreover, no pattern $212^+$ is created, so that all sites to the left of $n$ that were active remain so, provided they remain left-to-right maxima. 
The permutations so produced therefore have labels $(i,k+1)$ for $1\leq i \leq h$. 
Similarly, when inserting $n+1$ immediately to the right of $n$, no $212^+$ is created, and the subsequent permutation has label $(h+1,k)$. 
On the contrary, when inserting $n+1$ to the right of a right-to-left maximum $\sigma_j \neq n$, a pattern $212^+$ is created (as $n \sigma_j (n+1)$). 
Consequently, there are only two left-to-right maxima such that there is no pattern $212^+$ after them with a $2$ of a larger value: 
namely, those are $n$ and $n+1$. 
If $\sigma_j$ was the $i$-th right-to-left maximum of $\sigma$, starting their numbering from the right, 
then the resulting permutation has label $(2,i)$, for any $1\leq i<k$. 
\end{proof}

\subsection{A Schr\"oder family of mosaic floorplans}

\emph{Mosaic floorplans} (a simplified version of general floorplans) were defined by Hong et al.~\cite{Hong} in the context of chip design. 
A mosaic floorplan is a rectangular partition of a rectangle by means of segments that do not properly cross, 
\emph{i.e.}~every pair of segments that intersect forms a T-junction of type \tikz[scale=0.15]{
\draw (0,0) -- (2,0);
\draw (1,0) -- (1,1);
}, \tikz[scale=0.15]{
\draw (0,1) -- (2,1);
\draw (1,1) -- (1,0);
}, \tikz[scale=0.15]{
\draw (0,0) -- (0,2);
\draw (0,1) -- (1,1);
}, or \tikz[scale=0.15]{
\draw (1,1) -- (2,1);
\draw (2,0) -- (2,2);
}. 
The empty spaces between the segments are called \emph{rooms}. 
\emph{Internal segments} of a mosaic floorplan $F$ are segments that are not part of the bounding rectangle of $F$. 
Mosaic floorplans are considered up to equivalence under the action of sliding segments, namely up to translating their internal segments 
to modify the dimensions of the rooms yet without removing any T-junction.
Figure~\ref{fig:PFP} shows two mosaic floorplans that are equivalent. 
From now on, we write mosaic floorplan to denote an \emph{equivalence class} of mosaic floorplans. 
So, the two objects of Figure~\ref{fig:PFP} are rather two representatives of the same mosaic floorplan. 
Yao et al.~\cite{Yao} proved that mosaic floorplans are enumerated by Baxter numbers. 

In this section, we explain the growth of mosaic floorplans according to rule~\eqref{rule:BaxterMBM}, 
\emph{i.e.}~along the generating tree $\TBax$. 
Then, we define a subfamily of mosaic floorplans enumerated by Schr\"oder numbers, which we call \emph{Schr\"oder floorplans}. 
We prove that they grow according to rule~\eqref{rule:newSchroeder}. 

\begin{remark}
In their article~\cite{Yao}, Yao et al.~have also described a subfamily of mosaic floorplans enumerated by Schr\"oder numbers, called \emph{slicing floorplans}. 
They are defined by the avoidance of the configurations 
$\begin{array}{c}
\tikz[scale=0.25]{
\draw (1,0) -- (1,2);
\draw (0,2) -- (2,2);
\draw (2,1) -- (2,3);
\draw (1,1) -- (3,1);}
\end{array}$ and 
$\begin{array}{c}
\tikz[scale=0.25]{
\draw (6,1) -- (6,3);
\draw (6,2) -- (8,2);
\draw (7,0) -- (7,2);
\draw (5,1) -- (7,1);
}\end{array}$. 

Our Schr\"oder floorplans are also defined by a forbidden configuration of segments -- see Definition~\ref{dfn:SchroederPFP}. 
However, slicing floorplans do not coincide with our Schr\"oder floorplans. 
Nevertheless, both slicing floorplans and Schr\"oder floorplans avoid the configuration $\begin{array}{c}
\tikz[scale=0.25]{
\draw (1,0) -- (1,2);
\draw (0,2) -- (2,2);
\draw (2,1) -- (2,3);
\draw (1,1) -- (3,1);}
\end{array}$,
and the similarity of the forbidden configurations is striking. 
We leave open the problem of explaining this similarity combinatorially, 
for instance by describing an explicit bijection between slicing floorplans and Schr\"oder floorplans. 

Note that we were not able to describe a growth of slicing floorplans that follows rule~\eqref{rule:newSchroeder}.
\end{remark}

A difficulty in working with mosaic floorplans is that they are equivalence classes of combinatorial objects. 
To address this difficulty, \emph{packed floorplans} have been introduced in~\cite{PackFP}, 
where it is proved that every mosaic floorplan contains exactly one packed floorplan. 
(In some sense, packed floorplans can then be considered as canonical representatives of mosaic floorplans.) 
It follows from the enumeration of mosaic floorplans in \cite{Yao} that packed floorplans are enumerated by Baxter numbers. 

\begin{definition}\label{PFP}
A packed floorplan (PFP) of dimension $(d,\ell)$ is a
partition of a rectangle of width $\ell$ and height $d$, 
by means of segments that do not properly cross, 
into $d+\ell-1$ rectangular blocks whose sides have
integer lengths and such that the pattern \tikz[scale=0.15]{
\draw (0,2) -- (1,2) -- (1,3);
\draw (2,0) -- (2,1) -- (3,1);
} is avoided, \emph{i.e.}~for every
pair of blocks $(b_1, b_2)$, denoting $(x_1,y_1)$ the coordinates of the bottom rightmost
corner of $b_1$ and $(x_2, y_2)$ those of the top leftmost corner of $b_2$, it is not possible
to have both $x_1\leq x_2$ and $y_1\geq y_2$.

The size of a packed floorplan of dimension $(d,\ell)$ is $n=d+\ell-1$ and 
the set of packed floorplans of size $n$ is denoted $\mathcal{F}_n$.
\end{definition}

\begin{observation} \label{obs:PFP}
Some properties of PFPs have been proved in~\cite{PackFP}. 
Relevant to our work is the fact that every horizontal (resp. vertical) line of integer coordinate inside the bounding rectangle of a PFP 
is the support of exactly one segment of the PFP.
\end{observation}
 
Notice the slight change of terminology: 
while we speak of \emph{rooms} in mosaic floorplans (whose dimensions may change inside an equivalence class), 
we prefer the word \emph{blocks} in PFPs (since the dimensions of the empty spaces between segments of a PFP may not change). 

Figure~\ref{fig:PFP}$(a)$ shows an example of a packed floorplan, 
while Figure~\ref{fig:PFP}$(b)$ shows another (non-packed) representative of the same mosaic floorplan. 

\begin{figure}[ht]
\begin{center}
\includegraphics[scale=0.35]{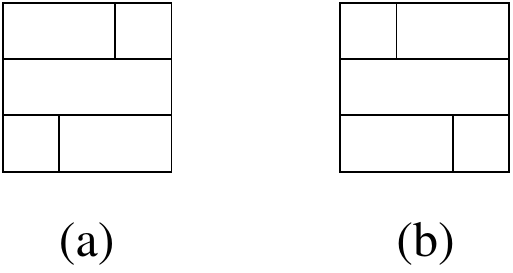}
\end{center}
\caption{(a) An example of a packed floorplan of dimension $(3,3)$, 
(b) a non-packed representative of the same mosaic floorplan. 
\label{fig:PFP}} 
\end{figure}

\begin{theorem}
\label{thm:PFP}
The family of PFPs grows according to the succession rule~\eqref{rule:BaxterMBM}, \emph{i.e.}~along the generating tree $\TBax$.
\end{theorem}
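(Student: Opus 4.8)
The plan is to mimic the structure used for Baxter slicings in Theorem~\ref{thm:growth_Baxter_slicings}: I must exhibit a growth of PFPs where each object of size $n$ produces children of size $n+1$, show that this growth is encoded by rule~\eqref{rule:BaxterMBM}, and verify that it realizes a generating tree (every PFP appears exactly once). The natural atom to add is a new block placed against a distinguished corner of the bounding rectangle---by analogy with adding a topmost row or rightmost column. Concretely, I would grow a PFP of dimension $(d,\ell)$ by inserting a new block along the top edge or along the right edge of the bounding rectangle, increasing the size from $d+\ell-1$ to $d+\ell$. The deletion operation (removing the block incident to the top-right corner) must be the inverse, so that reading the tree backwards is well-defined and each PFP has a unique parent.

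First I would pin down the statistic recorded by the label $(h,k)$. Guided by the interpretation for Baxter permutations (left-to-right and right-to-left maxima) and for Baxter slicings (widths and heights of extremal blocks), I expect $h$ to count the number of admissible horizontal insertion positions along the top border and $k$ the number of admissible vertical insertion positions along the right border---equivalently, the number of ``maximal'' segments or T-junctions reachable along each of the two borders emanating from the top-right corner. I would then verify the root: the unique PFP of size $1$ is a single cell, which should carry label $(1,1)$. Next, for a PFP with label $(h,k)$, I would enumerate the children: inserting a new block against the top edge in each of the $h$ admissible ways, and against the right edge in each of the $k$ admissible ways. The key computation is to check that a top-insertion of ``type $i$'' yields a child with label $(i,k+1)$ for $1\le i\le h$, while a right-insertion of ``type $j$'' yields a child with label $(h+1,j)$ for $1\le j\le k$, exactly matching the two lines of rule~\eqref{rule:BaxterMBM}. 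I would track how each insertion modifies the border segments so as to produce these label updates, paying attention to the avoidance pattern in Definition~\ref{PFP} to ensure each child is again a genuine PFP.

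The main obstacle I anticipate is twofold and concerns the packedness constraint. On one hand, I must prove that the insertion of a new block never creates the forbidden configuration \tikz[scale=0.15]{\draw (0,2) -- (1,2) -- (1,3); \draw (2,0) -- (2,1) -- (3,1);}, so that children are always valid PFPs; this requires a careful case analysis of how a newly added block interacts with existing blocks near the relevant corner. On the other hand---and this is the subtler point---I must show that deleting the top-right-incident block from any PFP of size $n\ge 2$ again yields a packed floorplan rather than merely a mosaic floorplan with slack, so that the parent map is well-defined and lands in $\mathcal{F}_{n-1}$. Since PFPs are canonical representatives, removing a block could in principle produce a non-packed representative that must be re-packed, and I would need to argue that the re-packing is either unnecessary or produces the same labelled object independent of choices. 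Establishing that this deletion is the exact inverse of the insertion operation, and that the insertion types are in bijection with the $h+k$ children prescribed by the rule, is what guarantees the unique-appearance property and hence that the full generating tree has shape $\TBax$.

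Once insertion and deletion are matched and the label transitions are verified, the conclusion is immediate: the growth realizes rule~\eqref{rule:BaxterMBM}, so $\mathcal{F}_n$ is enumerated by Baxter numbers and the generating tree is $\TBax$, as already independently known from~\cite{Yao}. I would present the argument with a figure illustrating a generic insertion at the top-right corner and the resulting label update, since the geometric bookkeeping of border segments is far clearer pictorially than in prose.
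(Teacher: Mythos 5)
Your proposal follows essentially the same route as the paper: grow PFPs by inserting a new block at the north-east corner (on top or on the right of it), label a PFP by $(h,k)$ where $h$ and $k$ are one plus the number of internal segments meeting the two borders, check that the insertions realize exactly the productions of rule~\eqref{rule:BaxterMBM}, and establish unique generation via the inverse deletion operation. The one worry you flag---that deleting the corner block might force a re-packing step---is resolved in the paper by defining the parent as the result of collapsing the whole strip beyond the left (resp.~bottom) side of the north-east block, the forbidden pattern of Definition~\ref{PFP} guaranteeing that only that block disappears, so the result is automatically a packed floorplan and no re-packing is needed.
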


Observe that a generating tree for PFPs is presented in \cite{PackFP} (via a procedure called \emph{InsertTile} for adding a new block in PFPs). 
Considering only the first few levels of this generating tree, it is immediately clear that it is not isomorphic to $\TBax$. 
Therefore, to prove Theorem~\ref{thm:PFP}, we need to define a new way of adding a block to a PFP. 

\begin{proof}
Consider a PFP $F$ of dimension $(d,\ell)$ and size $n=d+\ell-1$. 
We give to $F$ the label $(h,k)$, where 
$h$ (resp.~$k$) is one greater than the number of internal segments of $F$ 
that meet the right (resp.~upper) border of the bounding rectangle of $F$. 
We build $h+k$ children of size $n+1$ for $F$ as described below. 
(See also Figure~\ref{fig:PPgrowth}, which shows an example of the growth of PFPs of dimension $(3,3)$ having label $(3,2)$.)

The first $h$ children, of dimension $(d,\ell+1)$, are obtained by adding a new block $b$ on the right of the north-east corner of $F$: 
the left side of $b$ then forms a new internal segment that can reach the bottom border of the floorplan 
or stop when meeting any segment $s$ incident with the right border of $F$ (note that there are $h-1$ such segments). 
The segments reaching the right border of $F$ which are below $s$ (and the corresponding blocks) 
are then extended to reach the right border of the wider rectangle of dimension $(d,\ell+1)$. 

The other $k$ children, of dimension $(d+1,\ell)$, are obtained by adding a new block $b$ on top of the north-east corner of $F$: 
similarly, the bottom side of $b$ then forms a new internal segment that can reach the left border of the floorplan 
or stop when meeting any segment $s$ incident with the upper border of $F$ (note that there are $k-1$ such segments). 
Again, the segments reaching the upper border of $F$ which are to the left of $s$ (and the corresponding blocks) 
are extended to reach the upper border of the higher rectangle of dimension $(d+1,\ell)$. 

With $h$ and $k$ defined as above, and giving label $(h,k)$ to PFPs, 
it is clear that the children of a PFP with label $(h,k)$ have labels 
$(i,k+1)$ for $1\leq i \leq h$ (insertion of a new block on the right of $F$) 
and $(h+1,j)$ for $1\leq j \leq k$ (insertion of a new block on top of $F$). 
Moreover, the unique packed floorplan of size $1$ (having dimension $(1,1)$) has no internal segment, so its label is $(1,1)$. 

To prove that PFPs grow according to rule~\eqref{rule:BaxterMBM}, 
it is then enough to show that the above construction generates each PFP exactly once. 

First, we prove by induction that this construction generates only PFPs. 
The relation between the number of blocks and the dimensions of the bounding rectangle is clearly satisfied. 
So we only need to check that, if $F$ is a PFP, then all of its children avoid the pattern 
\tikz[scale=0.13]{
\draw (0,2) -- (1,2) -- (1,3);
\draw (2,0) -- (2,1) -- (3,1);
}. 
Consider a child $F'$ of $F$ obtained by adding a new block $b$ on the right of the north-east corner of $F$. 
The bottom right corners of the existing blocks 
may only be modified by being moved to the right. 
Similarly, if a child $F'$ of $F$ is obtained by adding a new block $b$ on top of the north-east corner of $F$, 
the top left corners of the existing blocks 
may only be modified by being moved upwards. 
So, in either case, those corners cannot create any pattern 
\tikz[scale=0.13]{
\draw (0,2) -- (1,2) -- (1,3);
\draw (2,0) -- (2,1) -- (3,1);
}. 
And the new block $b$ cannot create any such pattern either, 
since it has no block above it nor to its right. 

Next, we prove by induction that all PFPs are generated. 
Consider a PFP $F$ of size $n\geq 2$. 
Let $b$ be the block in the north-east corner of $F$ and $s$ (resp.~$t$) be the left (resp.~bottom) side of $b$. 
Their graphical configurations can be either $\begin{array}{c}\tikz[scale=0.3]{
\draw (1,0) rectangle (3,1) node[left=14pt] {$_s$} node[below=6pt] {$_t$};
\draw (0.5,0) -- (1,0);
}
\end{array}$ or $\begin{array}{c}\tikz[scale=0.3]{
\draw (1,0) rectangle (3,1) node[left=14pt] {$_s$} node[below=6pt] {$_t$};
\draw (1,-0.5) -- (1,0);
}\end{array}$. 

We claim that in the first (resp.~second) case, $b$ has width (resp.~height) $1$. 
This follows from Observation~\ref{obs:PFP}. 
Indeed, assuming it is not the case, we can consider the segment of $F$ lying on the rightmost internal vertical line 
(resp. the topmost internal horizontal line), and display an occurrence of the pattern \tikz[scale=0.15]{
\draw (0,2) -- (1,2) -- (1,3);
\draw (2,0) -- (2,1) -- (3,1);
}, which is forbidden in PFPs -- see (a) and (b) of Figure~\ref{fig:removingPFP}. 

\begin{figure}[ht]
\begin{center}
\includegraphics[scale=0.3]{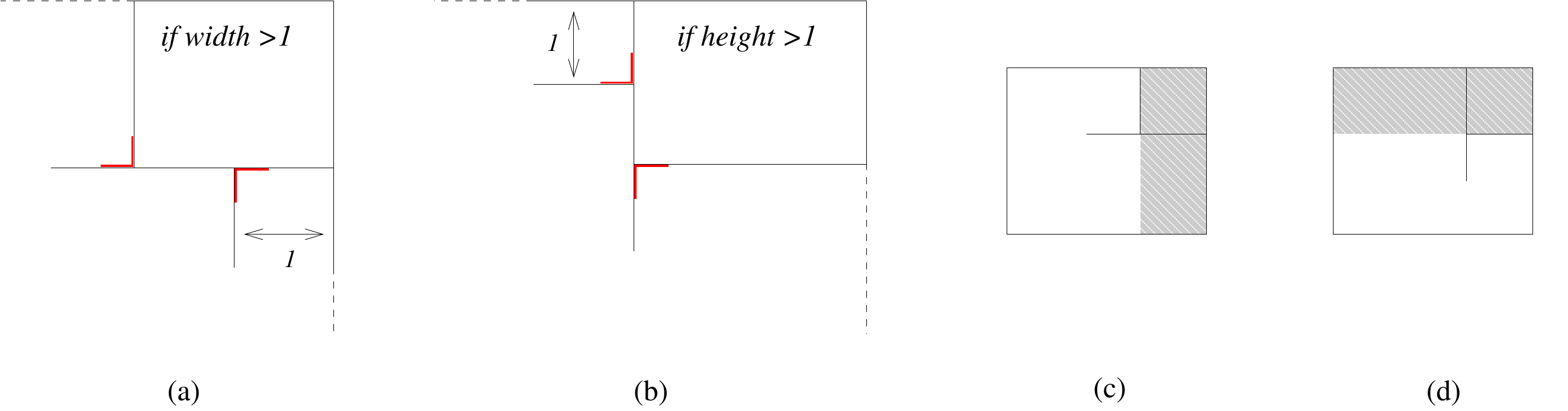}
\end{center}
\caption{Proof of Theorem~\ref{thm:PFP}. (a) and (b): The block $b$ has width (resp.~height) $1$. (c) and (d): Removing the hatched part from the PFP $F$ produces the PFP $F'$.}
\label{fig:removingPFP}
\end{figure}

So, in the first (resp.~second) case, we define $F'$ by deleting the part of $F$ on the right of the line on which $s$ lies, 
(resp.~the part of $F$ above the line on which $t$ lies), which is at distance $1$ from the boundary of the rectangle -- see (c) and (d) of Figure~\ref{fig:removingPFP}. 
This removal does not create any occurrence of the forbidden pattern. 
So $F'$ is indeed a PFP, of size one less than $F$, 
and $F$ is by construction one of the children of $F'$. 

Finally, it remains to prove that no PFP is generated several times. 
Obviously, the children of a given PFP are all different. 
So we only need to make sure that the parent of a PFP $F$ is uniquely determined. 
Looking again at the block $b$ in the north-east corner of $F$, 
and at the type of the T-junction at the bottom-left corner of $b$, 
we determine whether $b$ was added on top or on the right of the north-east corner of its parent. 
By construction, the parent is then uniquely determined: it is necessarily obtained from $F$ by deleting the parts of $F$ described above. 
\end{proof}

\begin{figure}[ht]
\begin{center}
\includegraphics[scale=0.35]{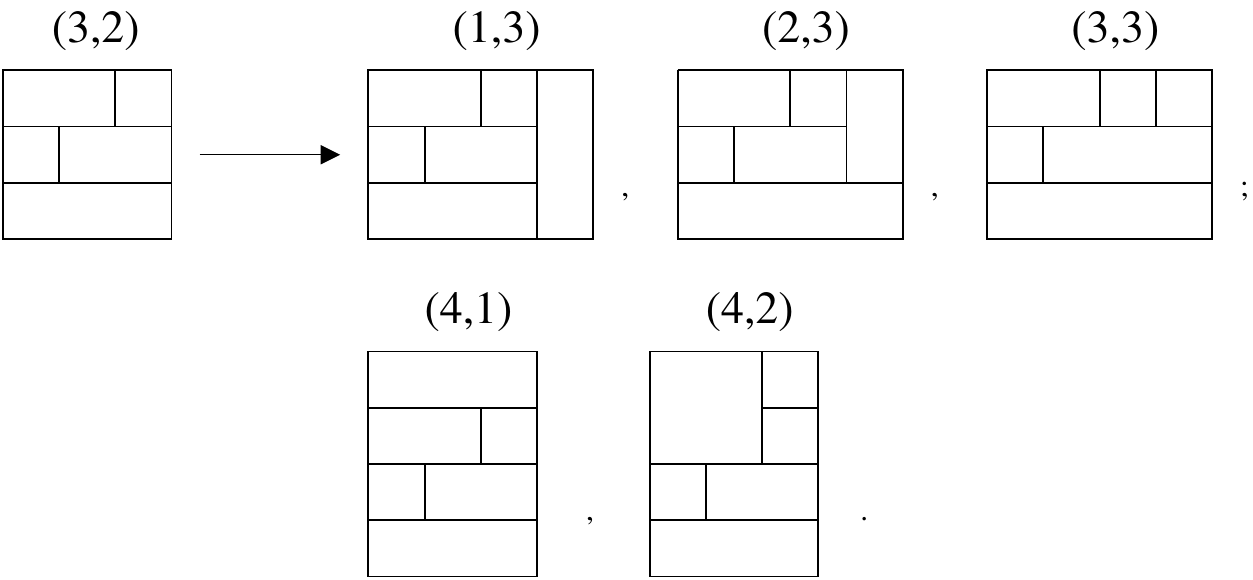}
\end{center}
\caption{The growth of packed floorplans following rule~\eqref{rule:BaxterMBM}. 
\label{fig:PPgrowth}} 
\end{figure}

\begin{definition}
\label{dfn:SchroederPFP}
A Schr\"oder PFP is a PFP as in Definition~\ref{PFP}, whose internal segments avoid the following configuration: 
$$\begin{array}{c}
\tikz[scale=0.25]{
\draw (0,1) -- (0,2);
\draw (-1,2) -- (2,2);
\draw (2,0) -- (2,3);
\draw (1,0) -- (3,0);
}
\end{array}.$$
\end{definition}
\noindent (Note that an occurrence of the above configuration where the bottom segment is the bottom border 
-- which is of course not an internal segment -- does not prevent a PFP from being a Schr\"oder PFP.)

Figure~\ref{fig:SPF2} shows some packed floorplans that contain the forbidden configuration of Definition~\ref{dfn:SchroederPFP} and so, they are not Schr\"oder PFPs.

\begin{figure}[ht]
\begin{center}
\includegraphics[scale=0.35]{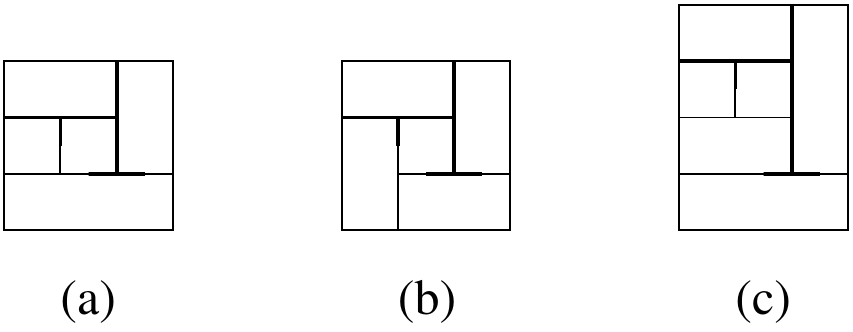}
\end{center}
\caption{(a),(b) The two packed floorplans of size $5$ which are not Schr\"oder PFPs, 
(c) a non-Schr\"oder packed floorplan of size $6$.
\label{fig:SPF2}}
\end{figure}

\begin{theorem}
\label{thm:SchPFP}
The generating tree obtained by letting Schr\"oder PFPs grow by insertion of a new block as in the proof of Theorem~\ref{thm:PFP} is $\TSch$. 
More precisely, they grow following rule~\eqref{rule:newSchroeder}.
\end{theorem}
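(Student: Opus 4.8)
The plan is to mirror the strategy used for Schr\"oder slicings in Theorem~\ref{thm:SchroederSlicings_growth}, transporting it to the geometry of packed floorplans. First I would establish that the family of Schr\"oder PFPs forms a \emph{subtree} of $\TBax$ when PFPs grow by the insertion operation of the proof of Theorem~\ref{thm:PFP}. By that proof, every PFP of size $n\geq 2$ has a unique parent, obtained by deleting the part of $F$ lying beyond the line supporting $s$ or $t$ (the sides of the north-east block $b$). Since this operation only erases segments and shortens others, any occurrence of the forbidden configuration of Definition~\ref{dfn:SchroederPFP} in the parent would already occur in $F$. Hence the parent of a Schr\"oder PFP is again a Schr\"oder PFP, and restricting the growth to those children that are Schr\"oder PFPs yields a genuine generating tree for Schr\"oder PFPs, in which each object appears exactly once.

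Next I would attach labels. To a Schr\"oder PFP $F$ I associate the label $(h,k)$, where $k$ is one greater than the number of internal segments of $F$ meeting the upper border (as in Theorem~\ref{thm:PFP}), and $h$ is the number of right-insertions of $F$ that produce a Schr\"oder PFP. The unique PFP of size $1$ then receives $(1,1)$. The productions are verified in three cases, exactly parallel to the three cases in the proof of Theorem~\ref{thm:SchroederSlicings_growth}: the $h$ valid right-insertions should yield the children $(1,k+1),\ldots,(h,k+1)$; the \emph{full} top-insertion, whose new internal segment reaches the left border, should yield the child $(h+1,k)$; and each of the $k-1$ \emph{partial} top-insertions, whose new segment stops at the $j$-th internal segment incident to the upper border, should yield a child $(2,j)$ for $1\leq j\leq k-1$. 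The second coordinates are immediate from the block-counting already carried out in Theorem~\ref{thm:PFP} (a right-insertion raises the upper-border count by one, while a top-insertion stopping at level $j$ leaves $j-1$ such segments); the real work lies entirely in the first coordinates.

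The heart of the argument---and the step I expect to be the main obstacle---is the geometric analysis of how the forbidden configuration of Definition~\ref{dfn:SchroederPFP} interacts with a freshly inserted block, from which three facts must be extracted. First, a top-insertion never creates the configuration, so all $k$ top-insertion children remain Schr\"oder PFPs; intuitively the new block sits at the north-east corner with nothing above it or to its right, so it cannot play the role of any of the four segments of the pattern. Second, among the right-insertions, precisely those whose new vertical segment stops \emph{too deep}---below the relevant horizontal segment of an already-present nested pair---complete the forbidden configuration; this both fixes $h$ intrinsically and shows that the surviving right-insertions are ordered so that the $i$-th has first coordinate $i$, giving the children $(i,k+1)$. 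Third, after a partial top-insertion the newly created horizontal segment, together with the segment it stops against, forms the upper part of the forbidden pattern, so that every right-insertion stopping below the first internal segment now completes the configuration, leaving exactly two admissible right-insertions. This is the floorplan analogue of the identity $r(u')=1$ appearing in the proof of Theorem~\ref{thm:SchroederSlicings_growth}, and it is what forces the first coordinate to collapse to $2$, producing the labels $(2,j)$ demanded by rule~\eqref{rule:newSchroeder}. Once these three geometric facts are checked, the three cases assemble to show that a Schr\"oder PFP labelled $(h,k)$ produces exactly the children prescribed by rule~\eqref{rule:newSchroeder}, and the identification of the generating tree with $\TSch$ follows.
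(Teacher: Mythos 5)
Your overall strategy coincides with the paper's: show that the parent of a Schr\"oder PFP is again a Schr\"oder PFP (so that the growth restricts to a genuine generating tree), attach a label $(h,k)$ where $k$ counts upper-border segments as in Theorem~\ref{thm:PFP} and $h$ is governed by which right-insertions avoid the forbidden configuration, and then verify the productions by a geometric case analysis. Your closure argument and your description of the admissible right-insertions (those whose new vertical segment does not stop below the topmost right-border segment carrying a T-junction of the offending type) agree with the paper, which formalizes the latter via the distinguished segment $p_F$.

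There is, however, a concrete error in your analysis of the top-insertions, which is precisely where rule~\eqref{rule:newSchroeder} differs from rule~\eqref{rule:BaxterMBM}. You assign the label $(h+1,k)$ to the \emph{full} top-insertion, the one whose new bottom segment reaches the left border. For $k\geq 2$ that child in fact has second coordinate $1$, not $k$: the new top block covers every former upper-border segment, so no internal segment meets the new upper border. Worse, that same bottom segment forms a T-junction of the offending type with \emph{every} former upper-border segment, so it becomes the new distinguished segment $p_{F'}$ and, by your own third geometric fact, the first coordinate collapses to $2$; the full top-insertion is therefore the $(2,1)$ child. The child labelled $(h+1,k)$ is the opposite extreme: the top-insertion whose new bottom segment stops at the \emph{rightmost} segment incident with the upper border. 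That insertion keeps all $k-1$ upper-border segments reaching the (new) upper border, hence second coordinate $k$, and at its left end the new segment meets a vertical from the right rather than crossing over it, so no new T-junction of the offending type is created and the first coordinate is $h+1$. Your third fact, correctly applied, covers the remaining $k-1$ top-insertions (those whose new segment passes over at least one upper-border segment, the full one included), yielding $(2,k-1),\ldots,(2,1)$. So while the multiset of labels you announce happens to be the correct one, your assignment of labels to children is inverted and internally inconsistent (the full top-insertion satisfies the hypothesis of your own fact three, yet you exempt it); the verification as you describe it would fail. Once this case analysis is repaired, the argument goes through exactly as in the paper.
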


\begin{proof}
Let $F$ be a PFP, and $b$ be the block in the north-east corner of $F$. 
Recall that the parent $F'$ of $F$ was described in the proof of Theorem~\ref{thm:PFP}. 
It follows immediately that if $F$ is a Schr\"oder PFP, 
then $F'$ is also a Schr\"oder PFP. 
Consequently, we can make Schr\"oder PFPs grow by addition of a new block either on the right of the north-east corner or above it, 
as in the proof of Theorem~\ref{thm:PFP}. 

\medskip

Let $F$ be a Schr\"oder PFP. 
We consider all its children following the growth of PFPs described in the proof of Theorem~\ref{thm:PFP}, 
and we determine which of them are Schr\"oder PFPs. 
Let $b$ be a new block added to $F$. 
Note first that the addition of $b$ may only create forbidden configurations involving the sides of $b$. 
Moreover, if such a forbidden configuration is created, the sides of $b$ are necessarily the segments shown in bold line in the following picture: 
$\begin{array}{c}
\tikz[scale=0.25]{
\draw (0,1) -- (0,2);
\draw (-1,2) -- (2,2);
\draw[very thick] (2,0) -- (2,3);
\draw (1,0) -- (2,0);
\draw[very thick] (2,0) -- (3,0);}
\end{array}$.
In particular, the T-junction at the bottom left corner of $b$ is of type \tikz[scale=0.15]{
\draw (0,0) -- (2,0);
\draw (1,0) -- (1,1);
}.

If $b$ is added above the north-east corner of $F$, 
then by construction the bottom side of $b$ reaches the left border of $F$ or forms a T-junction of type \tikz[scale=0.15]{
\draw (0,0) -- (0,2);
\draw (0,1) -- (1,1);
} with a segment meeting the upper border of $F$. 
So the forbidden configurations cannot be created, 
and all PFPs obtained by adding blocks above the north-east corner of $F$ are Schr\"oder PFPs. 

On the contrary, if $b$ is added on the right of the north-east corner of $F$, 
then the T-junction at the bottom left corner of $b$ is of type \tikz[scale=0.15]{
\draw (0,0) -- (2,0);
\draw (1,0) -- (1,1);
}, so a forbidden configuration may be created. 
More precisely, the forbidden configuration is generated if and only if the following situation occurs: 
the segment corresponding to the left side of $b$ reaches an internal segment meeting the right border of $F$, which in turn
is below another internal segment that is incident with the right border of $F$ 
and that forms a T-junction of type  \tikz[scale=0.15]{
\draw (0,1) -- (2,1);
\draw (1,1) -- (1,0);
} with some internal segment. 
So, to determine which children of $F$ are Schr\"oder PFPs, 
among those obtained by adding $b$ on the right of the north-east corner of $F$, 
it is essential to identify the topmost internal segment  
which meets the right border of $F$ and which forms a T-junction of type \tikz[scale=0.15]{
\draw (0,1) -- (2,1); 
\draw (1,1) -- (1,0);
} with some internal segment of $F$. 
If such a segment exists, it is denoted $p_F$.
Then, adding $b$ to $F$, a Schr\"oder PFP is obtained exactly when the bottom side of $b$ is 
either the bottom border of $F$ 
or an internal segment meeting the right border of $F$ which is above $p_F$ ($p_F$ included). 

\medskip

With the above considerations, it is not hard to prove that Schr\"oder PFPs grow according to rule~\eqref{rule:newSchroeder}. 
To any Schr\"oder PFP $F$, we assign the label $(h,k)$ 
where 
\begin{itemize}
 \item if $p_F$ exists, $h$ is one greater than the number of internal segments meeting the right border of $F$ above $p_F$ (included), 
 \item if $p_F$ does not exist, $h$ is one greater than the total number of internal segments meeting the right border of $F$, 
 \item in both cases, $k$ is one greater than the number of internal segments meeting the upper border of $F$.
\end{itemize}
Of course, the only (Schr\"oder) PFP of size $1$ has label $(1,1)$. 
Following the growth previously described, a Schr\"oder PFP $F$ of label $(h,k)$ produces:
\begin{itemize}
\item $h$ Schr\"oder PFPs obtained by adding a block $b$ on the right of the north-east corner of $F$. 
The left side of $b$ may reach the bottom border of $F$, and then a Schr\"oder PFP of label $(1,k+1)$ is obtained. 
It may also reach any internal segment $s$ incident with the right border of $F$ that is above $p_F$ (included), 
and Schr\"oder PFPs of labels $(2,k+1),\ldots,(h,k+1)$ are obtained in this way. 
Indeed, for the PFP $F'$ produced, $p_{F'} = p_F$, but the number of internal segments meeting the right border above it takes a value between $1$ and $h-1$. 
\item $k$ Schr\"oder PFPs obtained by adding a block $b$ above the north-east corner of $F$. 
The bottom side of $b$ may reach the rightmost segment incident with the upper border of $F$, and then a Schr\"oder PFP of label $(h+1,k)$ is obtained. 
But if it reaches any other segment incident with the upper border of $F$ (left border of $F$ included), then a T-junction of type \tikz[scale=0.15]{
\draw (0,1) -- (2,1);
\draw (1,1) -- (1,0);
} is formed with at least one internal segment meeting the upper border of $F$. 
By definition, for the Schr\"oder PFP $F'$ produced, we therefore have that $p_{F'}$ 
is the segment that supports the bottom edge of $b$. 
Consequently, the labels of the Schr\"oder PFPs produced are $(2,k-1),\ldots,(2,1)$. 
\end{itemize}
This concludes the proof that Schr\"oder PFPs grow with rule~\eqref{rule:newSchroeder}, 
and so along the generating tree $\TSch$.
\end{proof}

To illustrate the growth of Schr\"oder PFPs with rule~\eqref{rule:newSchroeder}, 
note that, seen as a Schr\"oder PFP, the object whose growth is depicted in Figure~\ref{fig:PPgrowth} has label $(2,2)$ 
and it has only four children (the middle object of the first line is not produced, and indeed it is not a  Schr\"oder PFP). 

Figure~\ref{fig:SPPgrowth} shows an example of the growth of a Schr\"oder PFP $F$ of dimension $(4,2)$ having label $(3,1)$. 
The segment $p_F$ (the topmost internal segment of $F$ which meets the right border and forms a T-junction of type \tikz[scale=0.15]{
\draw (0,1) -- (2,1); 
\draw (1,1) -- (1,0);
} with an internal segment of $F$) is highlighted in bold line. 

\begin{figure}[ht]
\begin{center}
\includegraphics[scale=0.35]{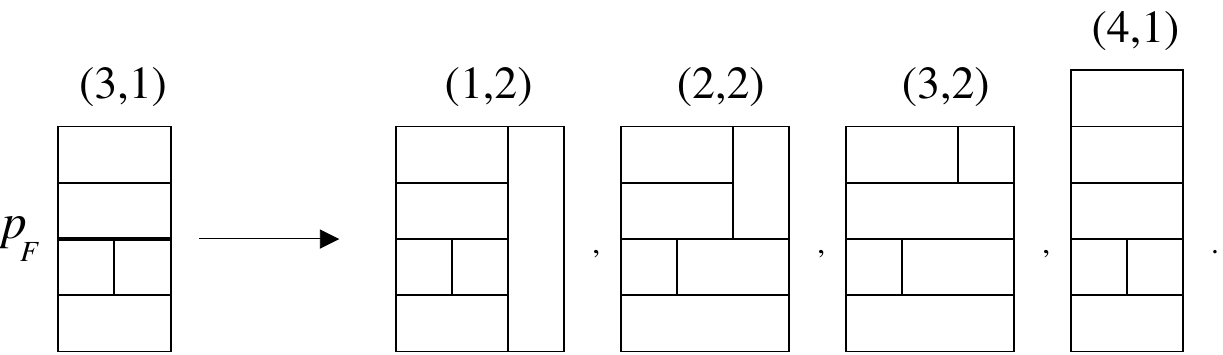}
\end{center}
\caption{The growth of a Schr\"oder PFP $F$ following rule~\eqref{rule:newSchroeder}. 
\label{fig:SPPgrowth}} 
\end{figure}

\begin{remark}
In the same fashion, we can define a subfamily of PFP enumerated by the Catalan numbers, 
and prove that they grow according to rule~\eqref{rule:Catalan}
(the label $k$  of a Catalan PFP is one greater than the number of internal segments meeting its upper border). 
A Catalan PFP would be a PFP as in Definition~\ref{PFP}, whose internal segments avoid the configuration \tikz[scale=0.25]{
\draw (0,0) -- (2,0);
\draw (1,0) -- (1,1);
}. 
The proof that they grow according to rule~\eqref{rule:Catalan} is omitted, 
but very similar to that of Theorem~\ref{thm:SchPFP}. 

We point out that a different proof that these Catalan PFP (considered up to a $-90^\circ$ rotation) are enumerated by the Catalan numbers 
has been given in \cite[Example  $\mathbb{F}_{13}$ in the Appendix]{Brak}, 
giving a product rule for Catalan PFP which corresponds to the standard Catalan product rule.
\end{remark}

\section{More families of restricted slicings}
\label{sec:extensions}

With the Schr\"oder slicings, we have seen in Section~\ref{sec:Schroeder} one way of specializing the succession rule~\eqref{rule:BaxterMBM}. 
In this section, we are interested in other specializations of rule~\eqref{rule:BaxterMBM}, which allow us to define Catalan slicings, 
$m$-skinny slicings and $m$-row-restricted slicings, for any integer $m \geq 0$. 
The next section will explore the properties of the generating functions for $m$-skinny slicings and $m$-row-restricted slicings. 

\subsection{Catalan slicings}

Similarly to the path followed to define Schr\"oder slicings, 
we can consider the generating tree $\TBax$ of Baxter slicings, and its subtree isomorphic to $\TCat$ discussed in Subsection~\ref{subsec:Cat_in_Bax}, 
to define ``Catalan slicings'' of parallelogram polyominoes. 
As expected, we find exactly one Catalan slicing $C$ for every parallelogram polyomino $P$, 
namely, the Baxter slicing of shape $P$ whose horizontal blocks all have width $1$. 
Alternatively, $C$ can be recursively described as follows: 
if the top row of $P$ contains just one cell, then this cell constitutes a horizontal block of $C$, 
and we proceed by computing the Catalan slicing of $P$ minus this top row; 
otherwise, the rightmost column of $P$ constitutes a vertical block of $C$, 
and we proceed by computing the Catalan slicing of $P$ minus this rightmost column. 

\subsection{Skinny slicings}

We have seen in Definition~\ref{dfn:SchroederSlicing} that Schr\"oder slicings are defined by condition~\eqref{eq:lr1}, that is to say, $\ell(u) \leq r(u)+1,$ for any horizontal block $u$.
Figure~\ref{fig:SchroederSlicings}(a) (page~\pageref{fig:SchroederSlicings}) shows which quantities are to be checked for satisfying the above condition. A rough idea to characterize a Schr\"oder slicing of a parallelogram polyomino $P$ is: every corner of the lower path defining $P$ may have above it only horizontal blocks that do not protrude more than \emph{one cell} leftward its $x$-coordinate.

Therefore, this condition~\eqref{eq:lr1} can be naturally generalized for any integer $m\geq0$: for any horizontal block $u$, 
\begin{equation}
\ell(u) \leq r(u) +m. \tag{$\mathnormal{\ell r_m}$} \label{eq:lrm}
\end{equation}

\begin{definition}
An \emph{$m$-skinny slicing} is a Baxter slicing such that for any horizontal block $u$, the inequality~\eqref{eq:lrm} holds.
\label{dfn:SkinnySlicing}
\end{definition}

Note that an $m'$-skinny slicing, with $m'\leq m$, is an $m$-skinny slicing as well. 
For instance, the slicing of Figure~\ref{fig:SchroederSlicings}(a) is an $m$-skinny slicing, for any $m\geq3$.

\begin{theorem}
A generating tree for $m$-skinny slicings is described by the following succession rule:

\begin{equation}
\textrm{root labeled } (1,1) \textrm{ \quad and \quad} (h,k) \rightsquigarrow \begin{cases}
(1,k+1) , (2,k+1) , \ldots  , (h,k+1),\vspace{0.2cm}\\ 
(h+1, 1), \ldots , (h+1,k-1),(h+1, k), &\mbox{ if } h<m, \\
(m+1,1) , \ldots , (m+1,k-1) , &\mbox{ if } h\geq m,\\
(h+1,k). &\mbox{ if } h\geq m.
\end{cases} \tag{$\Omega_m$}\label{rule:m_slicings}
\end{equation}
\label{thm:SkinnySlicings_growth}
\end{theorem}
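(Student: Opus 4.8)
The plan is to mimic the structure of the proof of Theorem~\ref{thm:SchroederSlicings_growth}, tracking how the label $(h,k)$ of an $m$-skinny slicing changes under the two admissible operations (adding a vertical block on the right, adding a horizontal block on top), but now with the threshold $m$ replacing the constant $1$ in condition~\eqref{eq:lrm}. First I would set up the labelling: to a given $m$-skinny slicing I assign $(h,k)$ where $k$ is the height of the rightmost column (which, as before, is unconstrained by~\eqref{eq:lrm}, so columns of any height $1,\dots,k$ may be added), and $h$ is the maximal width of a horizontal block that may be added on top without violating~\eqref{eq:lrm}. As in the Schr\"oder case, the ``downward closure'' remark applies: if a horizontal block of width $i$ is allowed then so is every width $i'\leq i$, so the horizontal additions produce exactly widths $1,\dots,h$. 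The root, a single cell, has no restriction triggered and gets label $(1,1)$.

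Next I would analyse the three families of children exactly as in Figure~\ref{fig:production_Schroeder_slicings}, but keeping careful track of the value of $h$ relative to $m$. For a horizontal addition of width $i\leq h$ producing $P'$, I claim the label is $(i,k+1)$: the rightmost column now has height $k+1$, and since $X(u)$ and $r(u)$ are unchanged for a further horizontal block placed above, the new maximal admissible width is exactly the width $i$ just used. This yields the first line $(1,k+1),\dots,(h,k+1)$ of rule~\eqref{rule:m_slicings}, independently of whether $h<m$ or $h\geq m$. Then for a vertical addition of height $k$ (the full current height), I would argue the label becomes $(h+1,k)$, as in the Schr\"oder proof: the horizontal blocks addable to $P'$ are either a single new top cell or an old admissible block augmented by one cell on the right, and in the latter case both $\ell(u')$ and $r(u')$ increase by $1$, so~\eqref{eq:lrm} is preserved, giving maximal width $h+1$. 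This explains the last entry $(h+1,k)$ in both branches.

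The crux — and the one genuinely new ingredient relative to Theorem~\ref{thm:SchroederSlicings_growth} — is the case of a vertical addition of height $j<k$, which forms a short rightmost column and resets the relevant $r$-value to $1$ for any future horizontal block placed against it. Here a horizontal block $u'$ on top has $r(u')=1$, so~\eqref{eq:lrm} reads $\ell(u')\leq 1+m=m+1$; hence the maximal admissible width is $m+1$, giving label $(m+1,j)$. But this must be reconciled with the dichotomy in the rule. The subtlety is that the first component of the produced label cannot exceed what is geometrically realizable: when $h<m$, the polyomino obtained after deleting the short column is not wide enough to actually host a horizontal block of width $m+1$, and one checks the maximal admissible width is instead $h+1$, yielding $(h+1,j)$; when $h\geq m$ the reset value $m+1$ is the true binding constraint and the label is $(m+1,j)$. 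I expect this reconciliation to be the main obstacle: I would prove that the quantity ``maximal horizontal width against the fresh short column'' equals $\min(m+1,\,h+1)$ by relating $h$ to the horizontal extent of the underlying polyomino, so that it is $h+1$ precisely when $h<m$ and $m+1$ precisely when $h\geq m$. Once this is established, the two branches of rule~\eqref{rule:m_slicings} follow, and the proof concludes by noting that these operations generate every $m$-skinny slicing exactly once, since the growth coincides with the (unambiguous) growth of Baxter slicings restricted to those satisfying~\eqref{eq:lrm}.
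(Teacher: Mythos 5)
Your proposal is correct and takes essentially the same approach as the paper: the paper's own proof is a one-liner stating that the argument of Theorem~\ref{thm:SchroederSlicings_growth} carries over verbatim, with only the third case (adding a column of height $j<k$) modified. You actually supply more detail than the paper on that one new point, and your value $\min(h+1,m+1)$ for the first component of the resulting label --- obtained by comparing the width bound $r+m$ with $r=1$ against the geometric width of the new top row --- is the correct one, matching rule~\eqref{rule:m_slicings} (the paper's proof misprints it as $\max(h+1,m+1)$).
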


\begin{proof}
The proof follows the exact same steps as the proof of Theorem~\ref{thm:SchroederSlicings_growth}, which corresponds to $m=1$. 
The only difference is that the maximal width of the horizontal block that may be added in the third case is $\min(h+1,m+1)$ instead of $2$. 
\end{proof}

Considering the case $m=0$, we obtain a family of Baxter slicings which is intermediate between Catalan slicings (for which $\ell(u) = 1$, for all horizontal blocks $u$) 
and Schr\"oder slicings (\emph{i.e.}~$1$-skinny slicings). 
The first few terms of the enumeration sequence of $0$-skinny slicings are $1,2,6,21,80,322,\ldots$. This sequence, and a curious enumerative result relating to it, are further explored in Section~\ref{sec:gen_funcs} (see Theorem~\ref{thm:2-row-restricted}).

\subsection{Row-restricted slicings} \label{paragraph:row-restricted}

Conditions~\eqref{eq:lrm} naturally generalize the condition that defines Schr\"oder slicings, 
but it is not the most natural restriction on horizontal blocks of Baxter slicings one may think of. 
Indeed, for some parameter $m\geq 1$, we could simply impose that horizontal blocks have width no larger than $m$. 
In what follows, we study these objects under the name of \emph{$m$-row-restricted slicings}. 

Note that, taking $m=1$, we recover Catalan slicings, 
and that the case $m=0$ is degenerate, since there is only one $0$-row-restricted slicing of any given size: 
the horizontal bar of height $1$ and width $n$ divided in (vertical) blocks made of one cell only. 

\begin{theorem}
A generating tree for $m$-row-restricted slicings is described by the succession rule:
\begin{equation}
\textrm{root labeled } (1,1) \textrm{\quad and \quad } (h,k) \rightsquigarrow 
\begin{cases}
(1, k + 1), (2, k + 1), \ldots , (h, k + 1), 
\vspace{0.2cm} \\
(h+1,1) , (h+1,2) , \ldots , (h+1,k), &\mbox{ if } h< m \\
(m,1) , (m,2) , \ldots ,  (m,k). &\mbox{ if } h= m\\
\end{cases}  \label{rule:row_restricted} \tag{$\Upsilon_m$}
\end{equation}
\label{thm:growth_row_restricted} 
\end{theorem}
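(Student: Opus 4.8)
The plan is to mirror the proof of Theorem~\ref{thm:SchroederSlicings_growth} (the Schr\"oder case) closely, since $m$-row-restricted slicings are obtained from Baxter slicings by the single restriction that every horizontal block $u$ satisfies $\ell(u) \leq m$. First I would observe, exactly as in the Schr\"oder case, that removing the topmost row or rightmost column of an $m$-row-restricted slicing again yields an $m$-row-restricted slicing, so these objects grow by adding vertical blocks on the right (of any height up to that of the current rightmost column) and horizontal blocks on top whose width is capped at $m$. I would then assign to each such slicing $P$ the label $(h,k)$, where $k$ is the height of the rightmost column (no restriction applies to vertical blocks, so columns of any height from $1$ to $k$ may be added), and $h$ is the maximal width of a horizontal block that may legally be placed on top of $P$ --- equivalently $h = \min(\ell, m)$ where $\ell$ is the width of the current topmost row. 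One checks that the root has label $(1,1)$.

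The core of the argument is to compute the label of each child. For the $h$ productions obtained by adding a horizontal block $u$ of width $i$ with $1 \le i \le h$: the new rightmost column has height $k+1$, and since the new topmost row has width exactly $i$, the new first label is $\min(i,m)$; because $i \le h \le m$ this equals $i$, giving children labeled $(i,k+1)$ for $1 \le i \le h$, precisely the first line of rule~\eqref{rule:row_restricted}. For the $k$ productions obtained by adding a rightmost column of height $j$ with $1 \le j \le k$: the new rightmost column has height $j$, and the topmost row is unchanged, so I must track how its effective width evolves. Here the case split on $h$ versus $m$ arises: when $h < m$ the topmost row has width $h$ (not yet capped), so adding a column widens the admissible horizontal blocks and the new first component becomes $h+1$, yielding children $(h+1,j)$; when $h = m$ the topmost row already has width at least $m$, the cap $m$ remains binding, and the children are $(m,j)$. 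Thus I recover exactly the second and third lines of the rule.

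The step I expect to require the most care is the verification, in the second case above, that adding a rightmost column genuinely increases the effective width from $h$ to $h+1$ when $h < m$, i.e.\ that the quantity controlling admissibility of horizontal blocks really is the raw topmost-row width $\ell$ (truncated at $m$) rather than something subtler involving the $r(u)$ projections of Definition~\ref{dfn:ell_and_r}. Unlike the $\ell r_m$ condition, the row-restriction $\ell(u) \le m$ is a purely local bound on each horizontal block independent of the lower border, so I would argue that the only constraint on a newly added top block is its own width being at most $m$; consequently the maximal admissible width is $\min(\ell,m)$ and the bookkeeping reduces to tracking $\ell$ and $k$. Once this identification is secured, the remaining computations are the routine case analysis sketched above, and the conclusion that the generating tree is governed by rule~\eqref{rule:row_restricted} follows immediately, completing the proof.
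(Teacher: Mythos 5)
Your proposal is correct and follows essentially the same route as the paper's (much terser) proof: label each slicing by the maximal width of an addable horizontal block and the maximal height of an addable vertical block, observe that the former equals $\min(\ell,m)$ with $\ell$ the topmost-row width since the row-restriction is a purely local bound, and track how these quantities change under the two kinds of insertion, the only case split being that the first component saturates at $m$. Your explicit justification of the $h<m$ versus $h=m$ dichotomy is exactly the point the paper's one-line proof gestures at.
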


\begin{proof}
Again, the proof is similar to those of Theorems~\ref{thm:growth_Baxter_slicings} and~\ref{thm:SkinnySlicings_growth}, 
and when a slicing has label $(h,k)$, $h$ (resp.~$k$) indicates 
the maximal width of a horizontal block that may be added 
(resp.~the maximal height of a vertical block that may be added). 
In the case of $m$-row-restricted slicings, when a vertical block is added to the right, 
the maximal width of a horizontal block that may be added afterward increases by $1$, 
except if it was $m$ already, in which case it stays at $m$. 
\end{proof}


%
%

\section{Generating functions and functional equations}\label{sec:gen_funcs}

Recall that a univariate function $f(x)$ is \emph{algebraic} if there exists a polynomial $P(x,y)$ such that $y=f(x)$ is a root of $P(x,y)=0$; 
while $f(x)$ is \emph{D-finite} if it is the solution of a linear differential equation $c_m(x)f^{(m)}(x) + c_{m-1}(x)f^{(m-1)}(x) + \ldots + c_0(x)f(x) = 0$, where all the $c_i(x)$ are polynomials.
Note also that every algebraic function is D-finite.


Examples of algebraic generating functions are given by the well-known generating functions for Catalan and Schr\"oder numbers:
\begin{align}
F_\text{Cat}(x) &= \frac{1-\sqrt{1-4x}}{2x} \tag{GF$_\text{Cat}$} \label{eqn:catalan_gf}\\
F_\text{Sch}(x) &= \frac{1-x-\sqrt{1-6x+x^2}}{2x} \tag{GF$_\text{Sch}$} \label{eqn:schroder_gf} 
\end{align}

On the other hand, the generating function $F_\text{Bax}(x)$ for Baxter numbers, as expressed in \cite{BM}, is D-finite but not algebraic.

\subsection{Functional equations for skinny and row-restricted slicings}

In this subsection we will set out the functional equations satisfied by the generating functions for $m$-skinny slicings and $m$-row-restricted slicings, as defined in Section~\ref{sec:extensions}. The solutions of these functional equations will then be discussed in the following two subsections.

\smallskip

We begin by treating separately the set of $0$-skinny slicings. From Theorem~\ref{thm:SkinnySlicings_growth}, $0$-skinny slicings grow according to rule $(\Omega_0)$:
\begin{equation}
\textrm{root labeled } (1,1) \textrm{ \quad and \quad} (h,k) \rightsquigarrow \begin{cases}
                        (1, k + 1), (2, k + 1)\, \ldots , (h, k + 1),\\
                        (1,1) , (1,2) , \ldots , (1,k-1) , (h+1,k).
                       \end{cases} \tag{$\Omega_0$} \label{rule:0_slicings}
\end{equation}
Now let 
\[F_{0\text{-}Sk}(x;u,v) \equiv F_{0\text{-}Sk}(u,v)=\sum_{\alpha\in {\cal T}_{\Omega_0}}  x^{n(\alpha)} u^{h(\alpha)} v^{k(\alpha)}\]
be the generating function for $0$-skinny slicings, 
where the variable $x$ takes into account the size $n(\cdot)$ of the slicing, while $u$ and $v$ correspond to the labels $h$ and $k$ of the object. 
The rule~\eqref{rule:0_slicings} can be translated into the following functional equation  
\begin{align}
F_{0\text{-}Sk}(u,v) &= xuv +\hspace{-.15cm}\sum_{\alpha\in {\cal T}_{\Omega_0}}\hspace{-.05cm}x^{n+1}(u+\ldots +u^h) v^{k+1} + 
\hspace{-.15cm}\sum_{\alpha\in {\cal T}_{\Omega_0}}\hspace{-.05cm}x^{n+1}u (v+\ldots +v^{k-1})  +\hspace{-.15cm}\sum_{\alpha\in {\cal T}_{\Omega_0}}\hspace{-.05cm}x^{n+1}u^{h+1} v^{k} \notag \\
&= \begin{multlined}[t] xuv+\frac{xuv}{1-u} \left[F_{0\text{-}Sk}(1,v)-F_{0\text{-}Sk}(u,v)\right]+ \frac{xu}{1-v} \left[v F_{0\text{-}Sk}(1,1)-F_{0\text{-}Sk}(1,v)\right]\\
+ xu F_{0\text{-}Sk}(u,v).\end{multlined} \tag{$0$-Sk} \label{eqn:0skinny_func_eqn}
\end{align}
Next, recall that $1$-skinny slicings are exactly Schr\"oder slicings, whose generating function is given by $F_\text{Sch}(x)$ in~\eqref{eqn:schroder_gf}. 

Thereafter, fix some $m\geq 2$. For any $i<m$, let
\[F_i(x;u,v)\equiv F_i(u,v) =\sum_{\alpha} x^{n(\alpha)} u^{h(\alpha)} v^{k(\alpha)}\]
be the trivariate generating function for $m$-skinny slicings 
whose label according to rule~\eqref{rule:m_slicings} is of the form $(i,\cdot)$. 
For $i=m$, $F_m(x;u,v)\equiv F_m(u,v) =\sum_{\alpha} x^{n(\alpha)} u^{h(\alpha)} v^{k(\alpha)}$ 
is defined a bit differently: it is the trivariate generating function for $m$-skinny slicings 
whose label according to rule~\eqref{rule:m_slicings} is of the form $(j,\cdot)$ for any $j \geq m$. 
Note that by definition $F_i(u,v) = u^i F_i(1,v)$ for all $i<m$, but this does not hold for $i=m$. 
The trivariate generating function for $m$-skinny slicings is given by $F_{m\text{-}Sk}(x;u,v)\equiv F_{m\text{-}Sk}(u,v)=\sum_i F_i(u,v)$. 
Similarly to the above case of $0$-skinny slicings the rule~\eqref{rule:m_slicings} translates into the following system: 
\begin{align}
F_1(u,v) &= xuv+xuv\left[F_1(1,v)+F_2(1,v)+\ldots+F_m(1,v)\right] \tag{Sk$_1$} \label{eqn:skinny_F1} \\[-5pt]
&\;\;\vdots \notag \\[-5pt]
F_i(u,v) &= \frac{xu^iv}{1-v}\left[F_{i-1}(1,1)-F_{i-1}(1,v)\right]+xu^iv\left[F_i(1,v)+\ldots+F_m(1,v)\right] \quad \text{for } 1<i<m \tag{Sk$_i$} \label{eqn:skinny_Fi} \\[-5pt]
&\;\;\vdots\notag \\[-5pt]
F_m(u,v) &= \begin{multlined}[t] \frac{xu^mv}{1-v}\left[F_{m-1}(1,1)-F_{m-1}(1,v)\right]+\frac{xu^{m+1}}{1-v}\left[vF_m(1,1)-F_m(1,v)\right]+xuF_m(u,v)\\
+\frac{xuv}{1-u}\left[u^{m-1}F_m(1,v)-F_m(u,v)\right]. \end{multlined} \tag{Sk$_m$} \label{eqn:skinny_Fm}
\end{align}

More precisely, Equation~\eqref{eqn:skinny_F1} is obtained by observing that, 
in addition to the root label $(1,1)$ which contributes for $xuv$, 
labels of the form $(1,k)$ are obtained only as the first production of the first row of the rule~\eqref{rule:m_slicings}. 
In Equation~\eqref{eqn:skinny_Fi}, the first (resp. second) term accounts for labels of the form $(i,k)$ 
produced \emph{via} the second (resp. first) row of the rule~\eqref{rule:m_slicings}.
Finally, in Equation~\eqref{eqn:skinny_Fm}, the first (resp. second, third, fourth) term 
corresponds to the productions of the second row of the rule~\eqref{rule:m_slicings} for $h=m-1$ 
(resp. third row, fourth row, first row of the rule~\eqref{rule:m_slicings}). 
In writing this equation, especially for the second term, it is important to remember that 
the exponent of $u$ in $F_m(u,v)$ is not identically $m$, but takes all possible values starting from $m$. 

\smallskip

Lastly, we consider $m$-row-restricted slicings. 
As previously mentioned, $m=0$ leads to a trivial combinatorial class, while $m=1$ yields the Catalan numbers and their generating function $F_\text{Cat}(x)$ as per~\eqref{eqn:catalan_gf}. 

We thus fix some $m \geq 2$. The succession rule~\eqref{rule:row_restricted} yields a system of functional equations satisfied by the generating function for $m$-row-restricted slicings. 
More precisely, for any $i\leq m$, denote by $G_i(x;u,v)\equiv G_i(u,v) =\sum_{\alpha} x^{n(\alpha)} u^{h(\alpha)} v^{k(\alpha)}$ 
the trivariate generating function for $m$-row-restricted slicings 
whose label according to rule~\eqref{rule:row_restricted} is of the form $(i,\cdot)$.
Also in this case, for any $m\geq2$, the trivariate generating function for $m$-row-restricted slicings is given by $G_{m\text{-}RR}(x;u,v)\equiv G_{m\text{-}RR}(u,v)=\sum_i G_i(u,v)$.
Note that $G_i(u,v) = u^i G_i(1,v)$ for all $i \leq m$, which makes the variable $u$ unnecessary.
Rule~\eqref{rule:row_restricted} translates into the following system: 
\begin{align*}
G_1(u,v)&=xuv+xuv\left[G_1(1,v)+G_2(1,v)+\ldots+G_m(1,v)\right]\\[-5pt]
&\;\;\vdots\\[-5pt]
G_i(u,v)&=\frac{xu^iv}{1-v}\left[G_{i-1}(1,1)-G_{i-1}(1,v)\right]+xu^iv\left[G_i(1,v)+\ldots+G_m(1,v)\right] \quad \text{for } 1<i<m\\[-5pt]
&\;\;\vdots\\[-5pt]
G_m(u,v)&=\frac{xu^mv}{1-v}\left[G_m(1,1)-G_m(1,v)+G_{m-1}(1,1)-G_{m-1}(1,v)\right]+xu^mvG_m(1,v),
\end{align*}
or equivalently, written without $u$ in $H_i(v) \equiv G_i(1,v)$: 
\begin{align}
H_1(v)&=xv+xv\left[H_1(v)+H_2(v)+\ldots+H_m(v)\right] \tag{RR$_1$} \label{eqn:rr_H1} \\[-5pt]
&\;\;\vdots\notag\\[-5pt]
H_i(v)&=\frac{xv}{1-v}\left[H_{i-1}(1)-H_{i-1}(v)\right]+xv\left[H_i(v)+\ldots+H_m(v)\right] \quad \text{for } 1<i<m \tag{RR$_i$} \label{eqn:rr_Hi} \\[-5pt]
&\;\;\vdots\notag \\[-5pt]
H_m(v)&=\frac{xv}{1-v}\left[H_m(1)-H_m(v)+H_{m-1}(1)-H_{m-1}(v)\right]+xvH_m(v). \tag{RR$_m$} \label{eqn:rr_Hm}
\end{align}

\subsection{The special case of $0$-skinny and $2$-row-restricted slicings}

In this subsection we prove the following surprising result, for which we presently have no bijective explanation.

\begin{theorem}
The number of $2$-row-restricted slicings is equal to the number of $0$-skinny slicings, for any fixed size.
\label{thm:2-row-restricted}
\end{theorem}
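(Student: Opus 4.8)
The plan is to prove Theorem~\ref{thm:2-row-restricted} by showing that the two generating functions coincide, working from the functional equations already set up in Section~\ref{sec:gen_funcs}. For $0$-skinny slicings I have the single equation~\eqref{eqn:0skinny_func_eqn} in the catalytic variables $u$ and $v$, while for $2$-row-restricted slicings I have the system~\eqref{eqn:rr_H1}--\eqref{eqn:rr_Hm} specialized to $m=2$, which after eliminating $u$ consists of two equations in $H_1(v), H_2(v)$ with the single catalytic variable $v$. The goal is to extract from each side a univariate generating function — $F^0(x;1,1)$ on one hand and $H_1(1)+H_2(1)$ on the other — and verify they are equal as power series in $x$. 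Since both are counting sequences starting $1,2,6,21,80,322,\dots$, I expect them to satisfy the same algebraic equation, so proving equality of the defining algebraic relations will suffice.

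First I would solve the $2$-row-restricted system. Setting $m=2$, equations~\eqref{eqn:rr_H1} and~\eqref{eqn:rr_Hm} become
\begin{align*}
H_1(v)&=xv+xv\bigl[H_1(v)+H_2(v)\bigr],\\
H_2(v)&=\frac{xv}{1-v}\bigl[H_2(1)-H_2(v)+H_1(1)-H_1(v)\bigr]+xvH_2(v).
\end{align*}
The first equation gives $H_1(v)$ rationally in terms of $H_2(v)$, which I substitute into the second to obtain a single equation for $H_2(v)$ carrying the unknown boundary constants $H_1(1)$ and $H_2(1)$. This is a standard kernel-method setup: the kernel is the coefficient of $H_2(v)$, and cancelling it along its small root $v=v(x)$ produces relations among the boundary terms that determine $H_1(1)$ and $H_2(1)$, and hence the total count $H_1(1)+H_2(1)$. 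I would then do the analogous kernel-method analysis for~\eqref{eqn:0skinny_func_eqn}: the kernel here involves both $u$ and $v$, and one cancels it to relate the sections $F^0(1,v)$ and $F^0(1,1)$, eventually pinning down $F^0(1,1)$.

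**The main obstacle** will be managing the two-variable kernel in~\eqref{eqn:0skinny_func_eqn} cleanly: unlike the row-restricted case, the $0$-skinny equation mixes $u$ and $v$ in a genuinely bivariate way, so the kernel method requires care in choosing which variable to cancel and in tracking the algebraic curve defined by the small root. Rather than solve each side in closed form and compare, the more robust route — which I would pursue if the closed forms are unwieldy — is to eliminate all catalytic variables and boundary sections from each side to obtain a single polynomial equation $P(x,y)=0$ satisfied by each univariate generating function, and then check that the two polynomials agree (up to a common factor). Once both counts are shown to satisfy the \emph{same} minimal polynomial over $\mathbb{Q}(x)$ with the same initial terms, equality of the sequences follows. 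An alternative, lower-tech fallback that I would keep in reserve is to prove equality of the two algebraic equations by a resultant computation eliminating the kernel roots, which reduces the theorem to a finite symbolic verification; this avoids having to exhibit either generating function explicitly. Since the statement explicitly disclaims any bijective explanation, an algebraic proof via matched defining equations is the intended and most direct path.
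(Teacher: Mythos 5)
Your proposal is correct and follows essentially the same route as the paper: the authors first solve the $m=2$ row-restricted system by the kernel method to obtain the cubic~\eqref{equation_fg_sloane_aux} for $H_1(1)+H_2(1)$ (Theorem~\ref{thm:GF_0skinny}), then apply the kernel method twice to~\eqref{eqn:0skinny_func_eqn} --- first cancelling in $u$ along $\mu(v)$, then in $v$ --- and verify symbolically that $F^0(1,1)$ satisfies the same algebraic equation. Your remark about matching the minimal polynomial together with initial terms correctly addresses the one point the paper leaves implicit, namely that the common equation has a unique power-series root.
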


We first solve the generating function for $2$-row-restricted slicings, and obtain the following.
\begin{theorem}
The generating function $H(x)$ for 2-row-restricted slicings 
satisfies the functional equation 
\begin{equation}
H(x)= \frac{x (H(x)+1)}{1-x(H(x)+1)^2}. \tag{$\dagger$} \label{equation_fg_sloane}
 \end{equation}
\label{thm:GF_0skinny}
\end{theorem}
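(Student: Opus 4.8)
The plan is to specialize the row-restricted system to $m=2$ and then extract the equation for the size generating function by a kernel-method computation. Recall that $H(x)$ counts all $2$-row-restricted slicings, so $H(x)=H_1(1)+H_2(1)$, where the $H_i(v)=G_i(1,v)$ are the series introduced just before~\eqref{eqn:rr_H1} (for $m=2$ the variable $u$ is redundant). For $m=2$ the intermediate equations~\eqref{eqn:rr_Hi} are vacuous, and \eqref{eqn:rr_H1} together with \eqref{eqn:rr_Hm} read, after clearing the denominator $1-v$ in the second one,
\[
H_1(v)(1-xv)=xv\bigl(1+H_2(v)\bigr),\qquad H_2(v)(1-v+xv^2)=xv\bigl(H-H_1(v)\bigr),
\]
where $H=H_1(1)+H_2(1)$ is the only boundary unknown that survives; note that \eqref{eqn:rr_Hm} is automatically regular at $v=1$, so no extra catalytic condition is imposed there.

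First I would eliminate $H_2(v)$: solving the first equation for $1+H_2(v)$ and substituting into the second collapses the system to a single catalytic equation
\[
H_1(v)\,K(v)=x^2v^2 H + xv\,(1-v+xv^2),\qquad K(v)=(1-xv)(1-v+xv^2)+x^2v^2,
\]
with the cubic kernel $K(v)=1-(1+x)v+(2x+x^2)v^2-x^2v^3$. A Newton-polygon argument (equivalently, the implicit function theorem applied at $x=0$, where $K(v)=1-v$) shows that $K$ has exactly one root $\rho_0=\rho_0(x)$ that is a formal power series with $\rho_0(0)=1$, the other two roots blowing up like $1/x$. Substituting $v=\rho_0$ annihilates the left-hand side, and dividing the remainder by $x\rho_0$ gives the relation
\[
1-\rho_0+x\rho_0^2=-\,x\rho_0 H. \tag{$\ast$}
\]

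The decisive step is the elimination of $\rho_0$ between $(\ast)$ and $K(\rho_0)=0$. Writing the kernel in the factored form $(1-x\rho_0)(1-\rho_0+x\rho_0^2)+x^2\rho_0^2=0$ and inserting $(\ast)$ turns it into a \emph{linear} identity in $H$, which solves to $x\rho_0(H+1)=H$; back-substituting this into $(\ast)$ then yields $\rho_0=H^2+H+1$. Combining these last two identities eliminates $\rho_0$ and produces $x(H+1)(H^2+H+1)=H$. This is equivalent to~\eqref{equation_fg_sloane}: expanding $(\dagger)$ gives $H\bigl(1-x(H+1)^2\bigr)=x(H+1)$, hence $H=x(H+1)\bigl(1+H(H+1)\bigr)=x(H+1)(H^2+H+1)$, so the two forms coincide.

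The part I expect to be delicate is twofold. On the analytic side, one must justify that the kernel substitution is legitimate, namely that $\rho_0$ really is a power series with $\rho_0(0)=1$ so that $H_1(\rho_0)$ is a well-defined series, and that this single admissible root suffices to pin down the scalar $H$ uniquely. On the algebraic side, the genuine content is the elimination: the near-miraculous collapse of the cubic kernel to a linear relation once $(\ast)$ is inserted is exactly what makes the final equation so simple, and verifying that this manipulation indeed returns the cubic $(\dagger)$ in $H$ (rather than some spurious higher-degree factor) is the heart of the argument.
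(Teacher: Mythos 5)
Your proposal is correct and follows essentially the same route as the paper: write down the $m=2$ system, eliminate one of the two catalytic series to get a single kernel equation (your polynomial kernel is the paper's rational kernel times $(1-v)(1-xv)$, with the same unique power-series root), substitute that root, and eliminate it to obtain the cubic $xH^3+2xH^2+(2x-1)H+x=0$, which is the paper's equation $(\dagger')$. The only cosmetic difference is that you eliminate $H_2(v)$ where the paper eliminates $H_1(v)$, and you organize the final elimination of the root slightly differently.
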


\begin{proof}
For $2$-row-restricted slicings, the succession rule is
\begin{equation}
\textrm{root labeled } (1,1) \textrm{\quad and \quad } (h,k) \rightsquigarrow 
\begin{cases}
(1, k + 1), \ldots , (h, k + 1), \\
(2,1) , (2,2) , \ldots ,  (2,k) \\
\end{cases}  \label{rule:2-restricted} \tag{$\Upsilon_2$}
\end{equation}
and the corresponding system of functional equations is
\begin{align}
\begin{split}
H_1(v)&=xv+xv(H_1(v)+H_2(v)) \\
H_2(v)&=\frac{xv}{1-v}(H_2(1)-H_2(v)+H_{1}(1)-H_{1}(v))+xvH_2(v). 
\end{split}
\tag{2-RR} \label{eqn:2rr_H2}
\end{align}
The quantity we wish to solve is 
the generating function for $2$-row-restricted slicings, given by $H(x)\equiv G_{2\text{-}RR}(x;1,1) = H_1(1) + H_2(1)$.
Canceling $H_1(v)$ between~\eqref{eqn:2rr_H2}, we arrive at
\begin{equation*}\label{eqn:2rr_kernel}
K(v)H_2(v) = \frac{xv}{1-v}\left(\frac{-xv}{1-xv}+H_1(1)+H_2(1)\right)
\end{equation*}
where
\[K(v) = 1-xv+\frac{xv}{1-v}+\frac{x^2v^2}{(1-v)(1-xv)}.\]
This equation is susceptible to the \emph{kernel method}~\cite{GFGT,BM}. The equation $K(v)=0$ is cubic in $v$, and one of the three roots has a power series expansion in $x$ (the other two are not analytic at $x=0$). 
Letting $\lambda(x) \equiv \lambda$ denote this root, we then have
\[H(x) = H_1(1) + H_2(1) = \frac{x\lambda}{1-x\lambda}.\]
It follows that $\lambda = \frac{H}{x(H+1)}$, and the condition $K(\lambda)=0$ rewrites as 
\begin{equation}
xH^3 + 2xH^2 + (2x-1)H+x=0 \ , \tag{$\dagger'$} \label{equation_fg_sloane_aux}
\end{equation}
or equivalently equation~\eqref{equation_fg_sloane}. 
\end{proof}

\begin{remark}
It follows that the sequence for 2-row-restricted slicings is (up to the first term) the same as sequence \textsc{a106228} in~\cite{OEIS}. 
Indeed, the generating function $S$ for sequence \textsc{a106228} is characterized by $xS^3-xS^2+(x-1)S+1=0$~\cite{C-machines}, 
and with \eqref{equation_fg_sloane_aux} it is immediate to check that $H+1$ satisfies this equation. 
\end{remark}

\begin{proof}[Proof of Theorem~\ref{thm:2-row-restricted}]
The generating function $F_{0\text{-}Sk}(u,v)$ for $0$-skinny slicings satisfies~\eqref{eqn:0skinny_func_eqn}, and this equation can also be solved via the kernel method. 
However, things are somewhat more complicated here, due to the presence of two catalytic variables. 
First, we rearrange the equation into the kernel form
\begin{equation*}
L(u,v)F_{0\text{-}Sk}(u,v) = xuv + xu\left(\frac{v}{1-u}-\frac{1}{1-v}\right)F_{0\text{-}Sk}(1,v) +\frac{xuv}{1-v}F_{0\text{-}Sk}(1,1)
\end{equation*}
where
\begin{equation*}\label{eqn:Luv}
L(u,v) = 1-xu + \frac{xuv}{1-u}.
\end{equation*}
The equation $L(u,v)=0$ is quadratic in $u$, and one of the two roots is a power series in $x$ with coefficients in $\mathbb Z[v]$ (the other is not analytic at $x=0$). 
We denote this root by 
\[
\mu(x,v) \equiv \mu(v) = \frac{1+x-xv - \sqrt{1-2 x-2xv+x^2-2 x^2v+x^2v^2} }{2x}.
\]
It follows that
\[M(v)F_{0\text{-}Sk}(1,v) = v + \frac{v}{1-v}F_{0\text{-}Sk}(1,1) \textrm{ \ where \ } M(v) = \frac{1}{1-v}-\frac{v}{1-\mu(v)}.\]
Now the kernel method can be applied \emph{again} -- the equation $M(v) = 0$ is (after rearrangement) quartic in $v$, 
namely, it is $4xv(1-v+xv-xv^2+xv^3)=0$. 
One of the three non-zero roots of this equation has a power series expansion in $x$. 
Denoting by $\kappa(x) \equiv \kappa$ this root, we finally have $F_{0\text{-}Sk}(1,1) = \kappa -1$. Some elementary manipulations in {\sc Mathematica} 
(or any other computer algebra system) show that $F_{0\text{-}Sk}(1,1)$ also satisfies~\eqref{equation_fg_sloane}.
\end{proof}

We point out that D. Callan indicates in~\cite{OEIS} that $F_{0\text{-}Sk}\equiv F_{0\text{-}Sk}(1,1)$ is also the generating function for Schr\"oder paths with no triple descents, 
\emph{i.e.}~having no occurrence of the factor $DDD$, where $D$ encodes the down step. 
It would be interesting to provide a bijection between Schr\"oder slicings and Schr\"oder paths 
whose restriction to $0$-skinny slicings yields a bijection with Schr\"oder paths having no triple descents. 
However, our first investigations in this direction have been unsuccessful. 

\begin{remark}
It does not hold in general that there are as many $m$-skinny slicings as $(m+2)$-row-restricted slicings: 
already for $m=1$, there are $91$ $3$-row-restricted slicings but $90$ Schr\"oder (\emph{i.e.}~$1$-skinny) slicings of size $5$. 
More precisely, out of the $92$ Baxter slicings of size $5$, only 
\tikz[scale=0.2]{
\draw (0,0) rectangle (4,2);
\draw (0,1) -- (4,1);
\draw (1,0) -- (1,1);
\draw (2,0) -- (2,1);
\draw (3,0) -- (3,1);
}
is not $3$-row-restricted, 
but both \tikz[scale=0.2]{
\draw (0,0) rectangle (2,2);
\draw (0,2) rectangle (3,3);
\draw (2,1) -- (3,1) -- (3,2);
\draw (1,0) -- (1,2);
\draw (0,1) -- (1,1);
}
and \tikz[scale=0.2]{
\draw (0,0) rectangle (2,2);
\draw (0,2) rectangle (3,3);
\draw (2,1) -- (3,1) -- (3,2);
\draw (0,1) -- (2,1);
\draw (1,0) -- (1,1);
}
are not Schr\"oder slicings.
\end{remark}

\subsection{Generating functions of $m$-skinny and $m$-row-restricted slicings for general $m$}

In this final subsection, we outline an approach for solving the generating functions for $m$-skinny and $m$-row-restricted slicings, for arbitrary $m$. While this method is \emph{provably} correct for small $m$, we do not know how to show that all of the steps always work, and so we omit any proofs. The following thus remains a conjecture.

\begin{conjecture}\label{conj:algebraic}
For all finite $m\geq0$, the generating functions for $m$-skinny and $m$-row-restricted slicings are algebraic.
\end{conjecture}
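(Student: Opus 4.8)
The plan is to establish both statements by the kernel method, treating the row-restricted case first and then reducing the skinny case to it. Consider the row-restricted system~\eqref{eqn:rr_H1}--\eqref{eqn:rr_Hm}. Since the variable $u$ factors out entirely (recall $G_i(u,v)=u^iG_i(1,v)$ for all $i\leq m$), this is genuinely a system in the single catalytic variable $v$, linear in the unknown series $H_1(v),\ldots,H_m(v)$ with coefficients in $\mathbb{Q}(x,v)$. First I would rewrite it as a matrix equation $\mathbf{A}(v)\,\mathbf{H}(v)=\mathbf{b}(v)$, where $\mathbf{A}(v)$ is upper-Hessenberg (subdiagonal entries $\tfrac{xv}{1-v}$, diagonal entries $1-xv$ apart from a modified last row, and entries $-xv$ strictly above the diagonal), and where the right-hand side $\mathbf{b}(v)$ carries the unknown boundary values $H_i(1)$. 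Because $\mathbf{A}(v)$ is Hessenberg, its determinant $\Delta(v)=\det\mathbf{A}(v)$ satisfies a short linear recurrence in $m$, from which one obtains $\Delta(v)$ in closed form after clearing the factors $(1-v)$.

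Next I would run the kernel argument. By Cramer's rule each $H_i(v)$ equals a ratio whose denominator is $\Delta(v)$; since the $H_i(v)$ are formal power series in $x$ with polynomial coefficients in $v$, every numerator must vanish at each root $v=\rho(x)$ of $\Delta(v)$ that is itself a power series in $x$ with $\rho(0)=0$. As the unknowns appearing in $\mathbf{b}(v)$ are exactly the $m-1$ quantities $H_1(1),\ldots,H_{m-2}(1)$ and $H_{m-1}(1)+H_m(1)$, whose sum is the desired generating function $H=\sum_i H_i(1)$, a degree count should show that $\Delta(v)$ has precisely $m-1$ such ``small'' roots $\rho_1(x),\ldots,\rho_{m-1}(x)$, the remaining roots not being analytic at $x=0$. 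Evaluating the numerators at these roots yields a linear system $\mathbf{M}(x)\,\mathbf{U}=\mathbf{c}(x)$ for the vector $\mathbf{U}$ of boundary unknowns, with $\mathbf{M}(x)$ a generalized Vandermonde-type matrix built from $\rho_1,\ldots,\rho_{m-1}$. If $\mathbf{M}(x)$ is invertible, Cramer's rule expresses each boundary value, and hence $H$, as a rational combination of the $\rho_j(x)$; since each $\rho_j$ is a branch of the algebraic curve $\Delta(v)=0$, the generating function $H$ is algebraic. For the $m$-skinny case one proceeds identically after a preliminary step: because $F_m(u,v)$ retains a genuine dependence on $u$, I would first apply the kernel method in $u$ to equation~\eqref{eqn:skinny_Fm} --- whose kernel is quadratic in $u$, exactly as in the $0$-skinny computation~\eqref{eqn:0skinny_func_eqn} --- using its small root $\mu(v)$ to eliminate $u$ and $F_m(u,v)$, thereby reducing \eqref{eqn:skinny_F1}--\eqref{eqn:skinny_Fm} to a system in the single catalytic variable $v$ of the same shape as the row-restricted one; the $v$-kernel argument above then applies verbatim.

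The hard part --- and the reason this remains a conjecture --- is proving that $\mathbf{M}(x)$ is invertible for every $m$, equivalently that the $m-1$ small roots $\rho_1(x),\ldots,\rho_{m-1}(x)$ produce linearly independent relations among the boundary unknowns. This is precisely the statement that the relevant power-series solutions of $\Delta(v)=0$ are linearly independent. For small $m$ it can be verified by direct computation, which is how Theorems~\ref{thm:GF_0skinny} and~\ref{thm:2-row-restricted} and the next few cases are obtained, but a uniform argument is elusive: the generalized Vandermonde determinant $\det\mathbf{M}(x)$ need not factor through the pairwise differences $\rho_i-\rho_j$ in an evidently nonvanishing way. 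I would attack this via the Galois action on the branches of $\Delta(v)=0$, or via a valuation and Newton-polygon analysis of the leading behaviour of the $\rho_j(x)$ as $x\to 0$, aiming to show that the small roots are sufficiently distinct to force $\det\mathbf{M}(x)\neq 0$. It is exactly this linear-independence step that I do not expect to resolve in full generality, which is what keeps the conclusion conjectural.
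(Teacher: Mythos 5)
Your proposal follows essentially the same route as the paper's own treatment in Section~\ref{sec:gen_funcs}: rewrite the system as a matrix equation with an almost-triangular kernel matrix, exploit the determinant recurrence, substitute the kernel roots that are finite at $x=0$ to obtain linear relations among the boundary unknowns, and reduce the skinny case to the row-restricted one by first killing $F_m(u,v)$ with the $u$-kernel root $\mu(v)$. You also correctly identify the exact obstruction the authors identify -- the unproven linear independence of the relations coming from the small roots -- which is precisely why the statement remains a conjecture rather than a theorem, so your attempt neither proves more nor less than the paper does.

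Two factual details would need correcting if you executed the plan: the relevant roots of $|\mathbf K_m(v)|=0$ tend to $1$ (not $0$) as $x\to0$, since the unperturbed numerator is $(1-v)^{m-2}$; and in the row-restricted case there are only $m-2$ such roots, not $m-1$, so the system must be completed by the extra equation obtained from setting $v=1$ in~\eqref{eqn:rr_H1} (and analogously $u=v=1$ in~\eqref{eqn:skinny_F1} for the skinny case, where the kernel instead supplies $m-1$ regular roots). Neither slip changes the architecture of the argument or the nature of the remaining gap.
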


Table~\ref{table:conj} summarizes the cases for which we know that the above statement holds, 
either from previous results in this paper, or from the method described below. 

\begin{table}[ht]
\begin{center}
\begin{tabular}{|l|c|c|c|c|c|c|}
\hline
$m$ & $0$ & $1$ & $2$ & $3$ & $4$ & $5$ \\
\hline
$m$-row & $1/(1-x)$ & $F_\text{Cat}(x)$ & eq.~\eqref{equation_fg_sloane} & eq.~\eqref{eq:GF3RR} and & eq.~\eqref{eq:GF4RR}  & eq.~\eqref{eq:GF5RR}  \\
-restricted & \S\ref{paragraph:row-restricted} & \S\ref{paragraph:row-restricted} & Thm~\ref{thm:GF_0skinny} & eq.~\eqref{eq:eqGF3RR} p.\pageref{eq:eqGF3RR} &  p.\pageref{eq:GF4RR} & p.\pageref{eq:GF5RR} \\
\hline
$m$-skinny & eq.~\eqref{equation_fg_sloane}  & $F_\text{Sch}(x)$  & eq.~\eqref{eq:GF2sk} & eq.~\eqref{eq:GF3sk} &  & \\
 & Thm~\ref{thm:2-row-restricted} & Thm~\ref{thm:SchPFP} & p.\pageref{eq:GF2sk}  & p.\pageref{eq:GF3sk} &   &  \\
\hline
\end{tabular}
\end{center}
\caption{For small values of $m$, the statement of Conjecture~\ref{conj:algebraic} holds. 
Each cell of the table gives the corresponding generating function and/or an equation characterizing it.}
\label{table:conj}
\end{table}

\smallskip

We will mostly focus on $m$-row-restricted slicings, and briefly explain at the end how to modify the method to solve $m$-skinny slicings. In the following it is assumed that $m\geq 3$.

\medskip

\noindent \textbf{Step 1.}~Note that the system~\eqref{eqn:rr_H1}--\eqref{eqn:rr_Hm} can be rewritten in the form of a matrix equation
\begin{equation}
\mathbf K_m(v) \mathbf H_m(v) = \mathbf B_m(v) \mathbf H_m(1) + \mathbf C_m(v), \tag{Mat-RR}
\label{eqn:matrices}
\end{equation}
where
\[\mathbf H_m(v) = \begin{pmatrix} H_1(v) \\ \vdots \\ H_m(v) \end{pmatrix}, \qquad \mathbf K_m(v) = \begin{pmatrix} 1-xv & -xv & -xv & -xv & \cdots & -xv \\ \frac{xv}{1-v} & 1-xv & -xv & -xv &  \cdots & -xv \\ 0 & \frac{xv}{1-v} & 1-xv & -xv & \cdots & -xv \\ \vdots & \ddots & \ddots & \ddots & \ddots & \vdots \\ 0 & 0 & \cdots & \frac{xv}{1-v} & 1-xv & -xv \\ 0 & 0 & 0 & \cdots & \frac{xv}{1-v} & 1-xv+\frac{xv}{1-v} \end{pmatrix},\]
\[\mathbf B_m(v) = \begin{pmatrix} 0 & 0 & 0 & 0 & \cdots & 0 \\ \frac{xv}{1-v} & 0 & 0 & 0 & \cdots & 0 \\ 0 & \frac{xv}{1-v} & 0 & 0 & \cdots & 0 \\ 0 & 0 & \frac{xv}{1-v} & 0 & \cdots & 0 \\ \vdots & \vdots & \ddots & \ddots & \ddots & \vdots \\ 0 & 0 & 0 & \cdots & \frac{xv}{1-v} & \frac{xv}{1-v} \end{pmatrix}\qquad\text{and}\qquad \mathbf C_m(v) = \begin{pmatrix} xv \\ 0 \\ \vdots \\ 0\end{pmatrix}.\]

\noindent \textbf{Step 2.}~The determinant $|\mathbf K_m(v)|$ is a rational function of $x$ and $v$ which can be shown to be not identically zero for any $m$. It follows that, in general, $\mathbf K_m(v)$ has an inverse. Write $\mathbf K_m^*(v) = |\mathbf K_m(v)|\mathbf K_m^{-1}(v)$ (the transpose of the matrix of cofactors of $\mathbf K_m(v)$). It can further be shown that none of the elements of the last row of $\textbf K_m^*(v)$ are identically zero.

\medskip

\noindent \textbf{Step 3.}~Multiply~\eqref{eqn:matrices} on the left by $\mathbf K^*_m(v)$ to give
\begin{equation}
|\mathbf K_m(v)| \mathbf H_m(v) = \mathbf K^*_m(v) \left[\mathbf B_m(v) \mathbf H_m(1) + \mathbf C_m(v)\right].\tag{$\nabla$}
\label{eqn:new_system}
\end{equation}
This can be viewed as a system of $m$ kernel equations, where the kernel (namely $|\mathbf K_m(v)|$) is the same for each. The LHS of the $m$-th equation of~\eqref{eqn:new_system} is $|\mathbf K_m(v)| H_m(v)$, while the RHS is a linear combination of all the $m$ unknowns $H_1(1),\ldots,H_m(1)$. Furthermore, note that in~\eqref{eqn:rr_H1}--\eqref{eqn:rr_Hm}, the unknowns $H_{m-1}(1)$ and $H_m(1)$ only appear together as $H_{m-1}(1)+H_m(1)$. Writing this latter quantity as $H_{(m-1)+m}(1)$, we now see that there are really only $m-1$ unknowns on the RHS of~\eqref{eqn:new_system}. 

\medskip

\noindent \textbf{Step 4.}~The equation $|\mathbf K_m(v)| = 0$ can be shown to have precisely $m-2$ roots (in the variable $v$) which are Puiseux series in $x$. Denote these roots by $\nu_1(x),\ldots,\nu_{m-2}(x)$.

\medskip

\noindent \textbf{Step 5.}~Substitute $v=\nu_i(x)$ into the first of the $m$ equations comprising the system~\eqref{eqn:new_system}, for $i=1,\dots,m-2$. This yields a system of $m-2$ linear equations in $m-1$ unknowns. 

\medskip

\noindent \textbf{Step 6.}~To obtain one more equation, set $v=1$ in~\eqref{eqn:rr_H1} (again combining $H_{m-1}(1)+H_m(1)$ as $H_{(m-1)+m}(1)$).

\medskip 

\noindent \textbf{Step 7.}~Solve this entire linear system of $m-1$ equations with $m-1$ unknowns, and add all solutions together to obtain the generating function $H(x)$ for $m$-row-restricted slicings.

\medskip

It is the validity of Step 7 which we are unable to verify in general. To do so, it would be necessary to show that the $\nu_i(x)$ are distinct and linearly independent functions of $x$, and moreover that the $(m-1)$-th equation obtained in Step 6 is independent of those obtained in Step 5. Nevertheless, this method has been verified manually for $m\leq 5$.

The series expansion of the generating function for $3$-row-restricted slicings is
\begin{equation}
x + 2x^2 + 6x^3 + 22x^4 + 91x^5 + 405x^6 + 1893x^7 + 9163x^8 + 45531x^9 + 230902x^{10} + O(x^{11}).\tag{GF$_\text{3RR}$}\label{eq:GF3RR}  
\end{equation}
With some help from {\sc Mathematica}, and here specifically from M.~Kauers' ``Guess" package, one finds that this generating function is a root of the cubic polynomial
\[x + 2 x^2 + x^3 + (-1 - 2 x + 2 x^2 + 3 x^3) H + (2 - 2 x^2 + 3 x^3) H^2 + (-1 + 3 x - 2 x^2 + x^3) H^3.\tag{Alg$_\text{3RR}$}\label{eq:eqGF3RR}  \]
The generating functions for $m=4$ and $m=5$ have the respective series expansions
\begin{align} 
&x + 2x^2 + 6x^3 + 22x^4 + 92x^5 + 421x^6 + 2051x^7 + 10449x^8 +55023x^9 + 297139x^{10} + O(x^{11}) \tag{GF$_\text{4RR}$}\label{eq:GF4RR}\\
&x + 2x^2 + 6x^3 + 22x^4 + 92x^5 + 422x^6 + 2073x^7 + 10724x^8 + 57716x^9 + 320312 x^{10} + O(x^{11}) \tag{GF$_\text{5RR}$}\label{eq:GF5RR}.
\end{align}
By construction these functions must be algebraic, but as the order of the kernel equation $|\mathbf K_m(v)|=0$ increases with $m$, we have been unable to determine precisely the polynomials satisfied by these generating functions.

\medskip

We now briefly turn to $m$-skinny slicings. The method is largely the same, with some minor differences. Firstly, an additional step is required at the start.

\medskip

\noindent \textbf{Step 0$^*$.}~Substitute $u=\mu(v)$ into~\eqref{eqn:skinny_Fm}, where $\mu(v)$ is the power series root of $L(u,v)$ as defined in the proof of Theorem~\ref{thm:2-row-restricted}.
This eliminates the term $F_m(u,v)$, leaving an equation relating $F_{m-1}(1,1), F_{m-1}(1,v), F_m(1,1)$ and $F_m(1,v)$.
Meanwhile, the variable $u$ is unnecessary in equations~\eqref{eqn:skinny_Fi} for $1\leq i<m$, so set it to 1.

\medskip

The remaining steps can then be adapted to this system of equations, with $F_i(1,v)$ taking the place of $H_i(v)$. One key difference is that $F_{m-1}(1,1)$ and $F_m(1,1)$ cannot be combined, so there are $m$ unknowns that need to be solved instead of $m-1$. However, this time the kernel (again the determinant of a matrix) has $m-1$ Puiseux series roots instead of $m-2$, which exactly compensates for this problem.

When $m=2$ the desired solution $F_1(1,1)+F_2(1,1)$ enumerating $2$-skinny slicings has the form
\begin{equation}
x+2 x^2+6 x^3+22 x^4+92 x^5+419 x^6+2022 x^7+10168 x^8+52718 x^9+279820 x^{10} + O(x^{11}). \tag{GF$_\text{2Sk}$}\label{eq:GF2sk}
\end{equation}
This generating function is a root of the quintic polynomial
\[x^3-x^2 (1-6 x) F-3 x^2 (2-5 x) F^2+x (2-13 x+19 x^2) F^3+x (5-12 x+12 x^2) F^4-(1-3 x+4 x^2-3 x^3) F^5.\]

When $m=3$ the generating function for $3$-skinny slicings has the form
\begin{equation}
x+2 x^2+6 x^3+22 x^4+92 x^5+422 x^6+2070 x^7+10668 x^8+57061 x^9+314061 x^{10} + O(x^{11}). \tag{GF$_\text{3Sk}$}\label{eq:GF3sk}
\end{equation}
By construction it is certainly algebraic, but we make no attempt here to write down the polynomial of which it is a root.

\paragraph*{Acknowledgments}
We are grateful to anonymous referees whose comments, at various stages of the writing 
(including the short version~\cite{FPSAC}),
were very helpful to improve the clarity of the current presentation. 

Many thanks also to Mireille Bousquet-M\'elou for patiently sharing with us her expertise with the kernel method (in theory and in \textsc{Maple}!).

The first author was supported by the Pacific Institute for the Mathematical Sciences and in particular the Collaborative Research Group in Applied Combinatorics, and the Australian Research Council grant DE170100186.


\begin{thebibliography}{1}

\bibitem{C-machines}
M. H. Albert, C. Homberger, J. Pantone, N. Shar, V. Vatter,
\newblock \emph{Generating Permutations with Restricted Containers},
Journal of Combinatorial Theory, Series A, vol. 157, pp. 205--232, 2018. 

\bibitem{PackFP}
J. C. Aval, A. Boussicault, M.~Bouvel, O.~Guibert, M. Silimbani,
\newblock \emph{Baxter Tree-like Tableaux},
in preparation. 

\bibitem{GFGT}
C. Banderier, M. Bousquet-M\'elou, A. Denise, P. Flajolet, D. Gardy, D. Gouyou-Beauchamps,
\newblock \emph{Generating functions for generating trees},
Disc. Math., vol. 246, pp. 29--55, 2002.

\bibitem{Eco}
E. Barcucci, A. Del Lungo, E. Pergola, R. Pinzani, 
\newblock \emph{ECO: a methodology for the Enumeration of Combinatorial Objects}, 
J. Diff. Eq. and App., vol. 5, pp. 435--490, 1999.

\bibitem{BijectionNick}
N.~R.~Beaton, M.~Bouvel, V.~Guerrini, S.~Rinaldi,
\newblock \emph{Enumerating five families of pattern-avoiding inversion sequences; and introducing the powered Catalan numbers}, 
Preprint available at \verb+https://arxiv.org/abs/1808.04114+

\bibitem{mbm}
N.~Bonichon, M.~Bousquet-M\'elou, \'E.~Fusy,
\newblock \emph{Baxter permutations and plane bipolar orientations}, 
S\'eminaire Lotharingien de Combinatoire 61A, article [B61Ah], 2008. 

\bibitem{BM} 
M. Bousquet-M\'elou, 
\newblock \emph{Four classes of pattern-avoiding permutations under one roof: generating trees with two labels}, 
Electronic J. Combinatorics, vol. 9(2), paper R19, 2003.

\bibitem{FPSAC}
M.~Bouvel, V.~Guerrini, S.~Rinaldi,
\newblock \emph{Slicings of parallelogram polyominoes, or how Baxter and Schr\"oder can be reconciled}, 
Proceedings of the 28th International Conference on Formal Power Series and Algebraic Combinatorics (FPSAC'16), pp. 287--298. 

\bibitem{SemiAndStrong}
M.~Bouvel, V.~Guerrini, A.~Rechnitzer, S.~Rinaldi,
\newblock \emph{Semi-Baxter and strong-Baxter: two relatives of the Baxter sequence}, 
Accepted for publication in the SIAM Journal on Discrete Mathematics.

\bibitem{BouvelGuibert}
M.~Bouvel, O.~Guibert,
\newblock \emph{Refined enumeration of permutations sorted with two stacks and a $D_8$-symmetry}, 
Annals of Combinatorics, vol. 18(2), pp. 199--232, 2014.

\bibitem{Brak}
R.~Brak,
\newblock \emph{A Universal Bijection for Catalan Structures},
Preprint available at \verb+http://arxiv.org/abs/1808.09078+

\bibitem{Mishna2}
S.~Burrill, J.~Courtiel, \'E.~Fusy, S.~Melczer, M.~Mishna,
\newblock \emph{Tableau sequences, open diagrams, and Baxter families},
European Journal of Combinatorics, vol. 58, pp. 144--165, 2016.

\bibitem{Mishna1}
S.~Burrill, S.~Elizalde, M.~Mishna, L.~Yen,
\newblock \emph{A generating tree approach to $k$-nonnesting partitions and permutations}, 
Annals of Combinatorics, vol. 20(3), pp. 453--485, 2016.

\bibitem{CGHK78}
F.R.K. Chung, R. Graham, V. Hoggatt, M. Kleiman,
\newblock \emph{The number of {B}axter permutations},
Journal of Combinatorial Theory Series A, vol. 24(3), pp. 382--394, 1978.
 
\bibitem{ffno}
S.~Felsner, \'E.~Fusy, M.~Noy, D.~Orden,
\newblock \emph{Bijections for Baxter families and related objects}, 
Journal of Combinatorial Theory Series A, vol. 118(3), pp. 993--1020, 2011. 

\bibitem{Gire}
S. Gire, 
\newblock \emph{Arbres, permutations \`a motifs exclus et cartes planaires: quelques probl\`emes algorithmiques et combinatoires}, 
Ph.D. thesis, Universit\'e Bordeaux I, 1993.

\bibitem{Hong}
X. Hong, G. Huang, Y. Cai, J. Gu, S. Dong, C. K. Cheng, J. Gu, 
\newblock \emph{Corner block list: An effective and efficient topological representation of non--slicing floorplan},
Proceedings of the International Conference on Computer Aided Design (ICCAD '00), pp. 8--12.

\bibitem{PS}
E. Pergola, R. A. Sulanke, 
\newblock \emph{Schr\"oder Triangles, Paths, and Parallelogram Polyominoes},
Journal of Integer Sequences, vol. 1, Article 98.1.7, 1998. 

\bibitem{ShaSte}
L. Shapiro, A. B. Stephens, 
\newblock \emph{Bootstrap percolation, the Schr\"{o}der numbers, and the $N$-kings problem}, 
SIAM Journal on Discrete Mathematics, vol. 4, pp. 275--280, 1991.

\bibitem{OEIS}
OEIS Foundation Inc., 
\newblock \emph{The On-line Encyclopedia of Integer Sequences},
\verb+http://oeis.org+, 2011.


\bibitem{West}
J. West, 
\newblock \emph{Generating trees and the Catalan and Schr\"oder numbers}, 
Discrete Mathematics, vol. 146, pp. 247--262, 1995. 

\bibitem{Yao}
B. Yao, H. Chen, C.K. Cheng, R.L. Graham, 
\newblock \emph{Floorplan representations: Complexity and connections}, 
ACM Transactions on Design Automation of Electronic Systems, vol. 8, pp. 55--80, 2003.

\end{thebibliography}
\end{document}